\newcommand{\NEbar}{{\overline{\operatorname{NE}}}}
\newcommand{\std}{{\operatorname{std}}}
\newcommand{\CK}[1]{{\operatorname{CK3}_{#1}}}
\newcommand{\CKmo}[1]{{\operatorname{CK3}_{#1}^{\scriptscriptstyle (-1)}}}
\newcommand{\MF}{{\operatorname{MF}}}
\newcommand{\Bl}{{\operatorname{Bl}}}
\newcommand{\flop}{{\operatorname{flop}}}
\newcommand{\old}{{\operatorname{old}}}
\newcommand{\new}{{\operatorname{new}}}
\newcommand{\sB}{{\mathcal B}}
\newcommand{\sO}{{\mathcal O}}
\newcommand{\sY}{{\mathcal Y}}
\newcommand{\C}{{\mathbb C}}
\newcommand{\F}{{\mathbb F}}
\newcommand{\G}{{\mathbb G}}
\renewcommand{\P}{{\mathbb P}}
\newcommand{\IS}{{\mathbb S}}
\newcommand{\Z}{{\mathbb Z}}
\newcommand{\gothn}{{\mathfrak n}}
\newcommand{\gothD}{{\mathfrak D}}
\newcommand{\gothY}{{\mathfrak Y}}
\newcommand{\isom}{{\ \cong\ }}
\newcommand{\Nef}{{\operatorname{Nef}}}
\newcommand{\ohne}{{\ \setminus \ }}
\newcommand{\Pic}{\operatorname{Pic}}
\newcommand{\ratl}{\dashrightarrow}
\newcommand{\Spec}{\operatorname{Spec}}
\renewcommand{\to}[1][]{\xrightarrow{\ #1\ }}
\newcommand{\wt}[1]{{\widetilde{#1}}}
\newcommand*{\da@rightarrow}{\mathchar"0\hexnumber@\symAMSa 4B }
\newcommand*{\da@leftarrow}{\mathchar"0\hexnumber@\symAMSa 4C }
\newcommand*{\xdashrightarrow}[2][]{%
  \mathrel{%
    \mathpalette{\da@xarrow{#1}{#2}{}\da@rightarrow{\,}{}}{}%
  }%
}
\newcommand{\xdashleftarrow}[2][]{%
  \mathrel{%
    \mathpalette{\da@xarrow{#1}{#2}\da@leftarrow{}{}{\,}}{}%
  }%
}
\newcommand*{\da@xarrow}[7]{%
  \sbox0{$\ifx#7\scriptstyle\scriptscriptstyle\else\scriptstyle\fi#5#1#6\m@th$}%
  \sbox2{$\ifx#7\scriptstyle\scriptscriptstyle\else\scriptstyle\fi#5#2#6\m@th$}%
  \sbox4{$#7\dabar@\m@th$}%
  \dimen@=\wd0 %
  \ifdim\wd2 >\dimen@
    \dimen@=\wd2 %
  \fi
  \count@=2 %
  \def\da@bars{\dabar@\dabar@}%
  \@whiledim\count@\wd4<\dimen@\do{%
    \advance\count@\@ne
    \expandafter\def\expandafter\da@bars\expandafter{%
      \da@bars
      \dabar@ 
    }%
  }%
  \mathrel{#3}%
  \mathrel{%
    \mathop{\da@bars}\limits
    \ifx\\#1\\%
    \else
      _{\copy0}%
    \fi
    \ifx\\#2\\%
    \else
      ^{\copy2}%
    \fi
  }%
  \mathrel{#4}%
}
\newsavebox\myboxA
\newsavebox\myboxB
\newlength\mylenA
\newcommand*\xtilde[2][0.8]{%
    \sbox{\myboxA}{$\m@th#2$}%
    \setbox\myboxB\null
    \ht\myboxB=\ht\myboxA%
    \dp\myboxB=\dp\myboxA%
    \wd\myboxB=#1\wd\myboxA
    \sbox\myboxB{$\m@th\widetilde{\copy\myboxB}$}
    \setlength\mylenA{\the\wd\myboxA}
    \addtolength\mylenA{-\the\wd\myboxB}%
    \ifdim\wd\myboxB<\wd\myboxA%
       \rlap{\hskip 0.5\mylenA\usebox\myboxB}{\usebox\myboxA}%
    \else
        \hskip -0.5\mylenA\rlap{\usebox\myboxA}{\hskip 0.5\mylenA\usebox\myboxB}%
    \fi}
\newbox\usefulbox
\def\getslant #1{\strip@pt\fontdimen1 #1}
\def\xxtilde #1{\mathchoice
 {{\setbox\usefulbox=\hbox{$\m@th\displaystyle #1$}%
    \dimen@ \getslant\the\textfont\symletters \ht\usefulbox
    \divide\dimen@ \tw@ 
    \kern\dimen@ 
    \xtilde{\kern-\dimen@ \box\usefulbox\kern\dimen@ }\kern-\dimen@ }}
 {{\setbox\usefulbox=\hbox{$\m@th\textstyle #1$}%
    \dimen@ \getslant\the\textfont\symletters \ht\usefulbox
    \divide\dimen@ \tw@ 
    \kern\dimen@ 
    \xtilde{\kern-\dimen@ \box\usefulbox\kern\dimen@ }\kern-\dimen@ }}
 {{\setbox\usefulbox=\hbox{$\m@th\scriptstyle #1$}%
    \dimen@ \getslant\the\scriptfont\symletters \ht\usefulbox
    \divide\dimen@ \tw@ 
    \kern\dimen@ 
    \xtilde{\kern-\dimen@ \box\usefulbox\kern\dimen@ }\kern-\dimen@ }}
 {{\setbox\usefulbox=\hbox{$\m@th\scriptscriptstyle #1$}%
    \dimen@ \getslant\the\scriptscriptfont\symletters \ht\usefulbox
    \divide\dimen@ \tw@ 
    \kern\dimen@ 
    \xtilde{\kern-\dimen@ \box\usefulbox\kern\dimen@ }\kern-\dimen@ }}%
 {}}
\newcommand*\xoverline[2][0.75]{%
    \sbox{\myboxA}{$\m@th#2$}%
    \setbox\myboxB\null
    \ht\myboxB=\ht\myboxA%
    \dp\myboxB=\dp\myboxA%
    \wd\myboxB=#1\wd\myboxA
    \sbox\myboxB{$\m@th\overline{\copy\myboxB}$}
    \setlength\mylenA{\the\wd\myboxA}
    \addtolength\mylenA{-\the\wd\myboxB}%
    \ifdim\wd\myboxB<\wd\myboxA%
       \rlap{\hskip 0.5\mylenA\usebox\myboxB}{\usebox\myboxA}%
    \else
        \hskip -0.5\mylenA\rlap{\usebox\myboxA}{\hskip 0.5\mylenA\usebox\myboxB}%
    \fi}
\def\xxoverline #1{\mathchoice
 {{\setbox\usefulbox=\hbox{$\m@th\displaystyle #1$}%
    \dimen@ \getslant\the\textfont\symletters \ht\usefulbox
    \divide\dimen@ \tw@ 
    \kern\dimen@ 
    \overline{\kern-\dimen@ \box\usefulbox\kern\dimen@ }\kern-\dimen@ }}
 {{\setbox\usefulbox=\hbox{$\m@th\textstyle #1$}%
    \dimen@ \getslant\the\textfont\symletters \ht\usefulbox
    \divide\dimen@ \tw@ 
    \kern\dimen@ 
    \xoverline{\kern-\dimen@ \box\usefulbox\kern\dimen@ }\kern-\dimen@ }}
 {{\setbox\usefulbox=\hbox{$\m@th\scriptstyle #1$}%
    \dimen@ \getslant\the\scriptfont\symletters \ht\usefulbox
    \divide\dimen@ \tw@ 
    \kern\dimen@ 
    \xoverline{\kern-\dimen@ \box\usefulbox\kern\dimen@ }\kern-\dimen@ }}
 {{\setbox\usefulbox=\hbox{$\m@th\scriptscriptstyle #1$}%
    \dimen@ \getslant\the\scriptscriptfont\symletters \ht\usefulbox
    \divide\dimen@ \tw@ 
    \kern\dimen@ 
    \xoverline{\kern-\dimen@ \box\usefulbox\kern\dimen@ }\kern-\dimen@ }}%
 {}}
\newcommand{\mylabel}[2]{#2\def\@currentlabel{#2}\label{#1}}
\newcommand{\Mac}{}
\DeclareRobustCommand{\Mac}{%
  M%
  \raisebox{\dimexpr\fontcharht\font`M-\height}{%
    \check@mathfonts\fontsize{\sf@size}{0}\selectfont
    c%
  }%
}
\newtheoremstyle{citing}
  {}
  {}
  {\itshape}
  {}
  {\bfseries}
  {\textbf{.}}
  {.5em}
  {\thmnote{#3}}
\theoremstyle{plain}
\newtheorem{theorem}{Theorem}
\newtheorem{lemma}[theorem]{Lemma}
\newtheorem{corollary}[theorem]{Corollary}
\newtheorem{setup}[theorem]{Setup}
\newtheorem*{maintheorem}{Main Theorem}
\newtheorem{proposition}[theorem]{Proposition}
\theoremstyle{remark}
\newtheorem{example}[theorem]{Example}
\theoremstyle{definition}
\newtheorem{definition}[theorem]{Definition}
\newtheorem{construction}[theorem]{Construction}
\newtheorem{notation}[theorem]{Notation}
\numberwithin{equation}{section}
\theoremstyle{remark}
\newtheorem{remark}[theorem]{Remark}
\theoremstyle{citing}
\theoremstyle{definition}
\title[Combinatorial K3 surfaces]{Combinatorial K3 surfaces and the Mori fan of the Dolgachev--Nikulin--Voisin family in degree 2}
\author{Klaus Hulek}
\address{Klaus Hulek\\Institut f\"ur Algebraische Geometrie, Leibniz University Hannover, Welfengarten 1, 30167 Hannover,
Germany}
\email{hulek@math.uni-hannover.de}
\author{Christian Lehn}
\address{Christian Lehn\\ Fakult\"at f\"ur Mathematik\\Ruhr-Universit\"at Bochum\\  Universit\"ats\-stra\ss e~150\\ Postfach IB 45\\ 44801 Bochum, Germany}
\email{christian.lehn@rub.de}
\let\origmaketitle\maketitle
\def\maketitle{
  \begingroup
  \def\uppercasenonmath##1{} 
  \let\MakeUppercase\relax 
  \origmaketitle
  \endgroup
}
\numberwithin{theorem}{section}
\def\nodesize{9pt}
\tikzset{
  vertex/.style={circle, draw, inner sep=0pt, minimum size=\nodesize},
  special/.style={circle, fill=black, inner sep=0pt, minimum size=\nodesize},
  bluewhite/.style={circle, draw=blue, fill=blue!50, fill opacity=0.5, inner sep=0pt, minimum size=\nodesize},
  bluefill/.style={circle, fill=blue, inner sep=0pt, minimum size=\nodesize}
}
\begin{document}
\thispagestyle{empty}

\begin{abstract}
We introduce the notion of a combinatorial K3 surface. Those form a certain class of type III semistable K3 surfaces and are completely determined by combinatorial data called curve structures. Emphasis is put on degree $2$ combinatorial K3 surfaces, but the approach can be used to study higher degree as well. We describe elementary modifications both in terms of the curve structures as well as on the Picard groups. Together with a description of the nef cone in terms of curve structures, this provides an approach to explicitly computing the Mori fan of the Dolgachev--Nikulin--Voisin family in degree $2$.
\end{abstract}

\subjclass[2020]{14D06, 14J28 (primary), 14E05, 14J10, 14Q10 (secondary).}
\keywords{Degenerations, K3 surface, Mori fan, birational modification, nef cones.}

\maketitle

\setlength{\parindent}{1em}
\setcounter{tocdepth}{1}



\tableofcontents

\section{Introduction}\label{section intro}

The Mori fan of Dolgachev--Nikulin--Voisin (DNV) families is central in the Gross--Hacking--Keel--Siebert approach to compactifying moduli spaces using mirror symmetry. For a model $\sY$ of the DNV family, that is, a certain kind of semistable type III degeneration of K3 surfaces, the Mori fan $\MF(\sY)$ is defined in terms of the birational geometry of the threefold $\sY$. 

The philosophy of this article is that most questions about $\MF(\sY)$ can be answered in terms of the combinatorics of the irreducible components of the central fiber $\sY_0$. We will thus replace the smooth three-dimensional scheme $\sY$ by a reducible surface with rational components. These are special {$d$-semistable K3 surfaces of type III} which we will refer to as \emph{combinatorial K3 surfaces}. Their combinatorial nature is based on the fact that they are determined by their \emph{curve structures}, an invariant introduced in~\cite{HL22}. Our goal is to utilize these surfaces to gain explicit control over the Mori fan. This is carried out for surfaces of degree $2$.

Projective models of combinatorial K3 surfaces in degree $2$ are all related by so-called type I and type II flops. Our main contribution is  a combinatorial description of these flops at the level of Picard groups. In the above notation, the pullback gives an embedding of $\Pic(\sY)$ to the Picard group of the normalization $\sY_0^\nu$ of the central fiber. Given a flop between $\sY \ratl \sY'$ between models, we define a map $\phi_c$ between the Picard groups of the normalizations of the central fibers in terms of their curve structures. This map is referred to as the \emph{combinatorial pushforward}, see Definitions~\ref{definition type i flop on picard of normalizations} and~\ref{definition type ii flop on picard of normalizations}. Then our main result can be stated as follows, see Theorems~\ref{theorem type i flop on picard} and~\ref{theorem type ii flop on picard}.

\begin{maintheorem}
Let $\Phi:\sY \ratl \sY'$ be a type I or type II flop between models of the DNV family in degree $2$. Then the combinatorial pushforward $\phi_c:\Pic(\sY_0^\nu) \to \Pic(\sY_0^{\prime\nu})$ restricts to $\Phi_*$ on $\Pic(\sY)$. 
\end{maintheorem}

While this description is surprisingly simple, it requires us to develop a lot of structure theory of combinatorial K3 surfaces. This is a natural continuation of \cite{HL22} and builds strongly on it.

With our main theorem at hand, one can map all cones in the Mori fan to the Picard group of a fixed reference model. Together with a description of the cone of curves of a combinatorial K3 surface in terms of its curve structure (which thus provides us with an explicit description of its nef cone), this provides an approach to explicitly computing the Mori fan of the Dolgachev--Nikulin--Voisin family in degree $2$.

Based on our description, one can implement the Mori fan on a computer. This was the original motivation for this work, but in the course of the project, it became clear that there were theoretical questions to be answered that are equally interesting. Apart from the main theorem, this includes e.g.\ a better understanding of interior special points as in Section~\ref{section interior special points} or an explicit description of the cone of curves in terms of the curve structure, see Proposition~\ref{proposition cone generators}. As a follow-up project, we have implemented combinatorial K3 surfaces in SageMath \cite{sagemath} and will use it in forthcoming work to further study the Mori fan and related constructions. 

\subsection*{Outline of the paper} 
In Section~\ref{section combinatorial k3 surfaces}, we introduce the class of combinatorial K3 surfaces and discuss their relation to models of the DNV family and their basic properties. We recall the notion of a curve structure from \cite{HL22} and define type I flops in the context of combinatorial K3 surfaces. A flop (of type I or II) is not different from the effect that a corresponding flop between models of the DNV family has on its central fibers. However, when exclusively working with these singular surfaces, an additional subtlety arises. To get from the normalization to the surface itself, one has to specify a gluing of the double curves. This is not an issue for type I flops, since they are isomorphisms at the generic point of the double curves so that the gluing is not affected. However, for type II flops new boundary components are introduced and others disappear so that the gluing after the flop has to be discussed. This is done via interior special points, which are treated in Section~\ref{section interior special points}. It turns out that the appearance of interior special points after a blow-up can be detected by special rulings on the irreducible components of the combinatorial K3 surfaces. While those points are determined by the Carlson map in all degrees, we were not able to turn the latter into an explicit description. Substituting the Hodge theoretic ingredient of the Carlson map by the geometric approach via rulings limits our description to degree $2$ here, but this is enough for the purpose of this paper. The constructive approach to interior special points is used in Section~\ref{section type ii flops} to specify the gluing for the newly appearing boundary components in a type II flop between degree $2$ combinatorial K3 surfaces. The effect of a type II flop on the curve structures is discussed. In Section~\ref{section bases and cones}, we provide several results needed for computing the Mori fan explicitly. Firstly, we explain the effect of a type I flop on the curve structure in Section~\ref{section type i flops on curve structures}. Then, we prove in Lemma~\ref{lemma curve structure is z basis} that the curves in the curve structure form a $\Z$-basis of the Picard group of the normalization. Finally, we show that the curves in the augmented curve structure generate the cone of curves, see Proposition~\ref{proposition cone generators}.

The final section, Section~\ref{section flops on picard}, defines the combinatorial pushforward for flops of type I and II and proves the main theorem, all in the degree $2$ case.

\subsection*{Notation} We will abbreviate $S:=\Spec\C[[t]]$. Schemes over $S$ will be denoted by caligraphic letters like $\sY\to S$ whereas straight letters will usually refer to $\C$-schemes. For a variety $X$, we will denote by $\nu:X^\nu \to X$ its normalization.

\subsection*{Acknowledgements} 
Klaus Hulek was partially supported by DFG grant \mbox{Hu~337/7-2}. Christian Lehn was
supported by the DFG through the research grants Le 3093/3-2 and Le 3093/5-1. We thank the referee for their thorough reading and valuable comments.
 
\section{Combinatorial K3 surfaces}\label{section combinatorial k3 surfaces}

\subsection{Models of the DNV family}\label{section models}

Recall from \cite[Definition~2.22]{HL22} that a model of the Dolgachev--Nikulin--Voisin family (or DNV family) of degree $2d$ is a primitive projective type III degeneration $\sY\to S:=\Spec\C[[t]]$ which is maximal and whose generic fiber has Picard group isomorphic to 
\[
\check M_{2d}:=U\oplus E_8(-1)^{\oplus 2}\oplus \left\langle -2d\right\rangle.
\]
The notions of a type III degeneration can be found in Definition~2.2, of primitivity in Definition~2.8, and of maximality in Definition~2.15 of \cite{HL22}. The precise knowledge of these terms will however not be essential for the present paper, as we use the rather concrete descriptions in terms of their central fibers. Indeed, by \cite[Proposition~2.13]{HL22} models of the DNV family are determined uniquely by their central fibers, which are essentially combinatorial objects. Let us recall the following notion.

\begin{definition}\label{definition d semistable type iii k3 surface}
A \emph{$d$-semistable K3 surface of type III} is a normal crossing surface $Y = \bigcup Y_i$ such that
\begin{enumerate}
    \item $Y$ is $d$-semistable in the sense of Friedman \cite[Definition~1.13]{Frie83}.
    \item The dualizing sheaf $\omega_Y = \mathcal{O}_Y$ is trivial.
    \item The irreducible components $Y_i$ are rational surfaces, and double curves give rise to anticanonical
    cycles of rational curves on the normalization~$Y_i^{\nu}$. 
    \item The dual intersection complex of $Y$ is a triangulation\footnote{For a triangulation, we allow there to be several $p$-simplices on a given set of $p+1$ vertices.} of the sphere $\IS^2$.
\end{enumerate}
\end{definition}

\begin{remark}\label{remark log calabi yau surfaces}
For an equidimensional $S_2$-scheme $X$ with trivial dualizing sheaf $\omega_X$, the conductor ideal defines an anticanonical cycle on the normalization by subadjunction, see \cite[Proposition~2.3]{Rei94}. In particular, for a $d$-semistable K3 surface of type III the preimages of the double curves on the normalizations $Y_i^\nu\to Y_i$ are anticanonical cycles, i.e. the double curves give rise to log Calabi--Yau surface pairs $(Y_i^\nu,D_i)$. As \'etale covers of (possibly singular) rational curves, the preimages are again rational.
\end{remark}

Let $Y$ be a $d$-semistable K3 surface of type III and denote by $Y_i^\nu \to Y_i$ the normalizations of its irreducible components. Let us consider the canonical sequence 
\begin{equation}\label{eq maximality sequence}
0 \to \Pic(Y) \to \bigoplus_i H^2(Y_i^\nu, \mathbb{Z}) \to \bigoplus_C H^2(C^\nu, \mathbb{Z})
\end{equation}
where $C$ runs through all double curves, see equation (2.3) on p.9 of \cite{HL22}. Note that the curve $C$ might get normalized in $Y_i^\nu$, so it is more natural to write $C^\nu$ in the last term, even if it does not change anything on the level of $H^2$. The following definition is central to this article.

\begin{definition}\label{definition combinatorial k3 surface}
A $d$-semistable K3 surface $Y$ of type III is called \emph{maximal} if the sequence \eqref{eq maximality sequence} is exact. A \emph{combinatorial K3 surface} is a maximal $d$-semistable K3 surface. Such a surface is said to have degree $2d$ if it has $d+2$ irreducible components.
\end{definition}

\noindent Note that the notion of maximality from the previous definition coincides with the one from \cite[Definition~2.15]{HL22}. This is shown in \cite[Lemma~2.10]{HL22}, taking into account equation (2.4) in op. cit. In particular, the central fiber of a type III degeneration is a maximal $d$-semistable K3 surface if and only if the degeneration itself is maximal.

\begin{remark}\

\begin{enumerate}
\item Equivalently, we can describe a combinatorial K3 surface as follows. Instead of a surface $Y$ as above, we describe it as a semisimplicial object $\xymatrix@C=1.5em{Z_1 \ar@<2pt>[r] \ar@<-2pt>[r] & Z_0}$ of length $2$. Here, $Z_0=Y^\nu$ is the normalization and $Z_1$ is the disjoint union over all double curves. One can think of this as a gluing datum, i.e. $Y$ as the pushout of this diagram. Note that pushouts always exist for finite morphisms by \cite[Théorème~5.4]{Fer03}.
	\item By \cite[Lemma~2.11]{HL22}, the maximality property rigidifies the notion of a $d$-semistable K3 surface of type III in the sense that it singles out a distinguished point in its space of locally trivial deformations. This is essential for fully reducing a degeneration to a combinatorial object, see also Remark~\ref{remark combinatorial nature}.
	\item The definition of the degree of a combinatorial K3 surface is in line with the definition of the degree of a model of the DNV family in \cite{HL22}. By \cite[Proposition~2.13]{HL22}, the central fiber of a model of the DNV family in degree $2d$ yields a triangulation of $\IS^2$ with $2d$ triangles. Note that the index $k$ showing up in loc. cit. is equal to $1$ by primitivity, see \cite[Definition~2.8]{HL22}. Hence, this makes $3d$ edges and hence $d+2$ vertices by Euler's theorem.
	\item Also, the primitivity property of models of the DNV family is defined for the central fiber of the model, see~\cite[Section~2]{HL22}. In fact, it is a property of the dual intersection complex of this combinatorial K3 surface and we do not need the model to define it. It seemed more natural to us, not to include it in the definition of a combinatorial K3 surface because it does not contribute to the combinatorial nature of this object and anyway in this article we are mainly concerned with the very explicit models in degree two.
\end{enumerate}
\end{remark}

We introduce some notation for combinatorial K3 surfaces.

\begin{notation}
Let $d$ be a positive integer and $G$ a triangulation of the sphere $\IS^2$. We introduce the following classes of objects.
\begin{description}[style=multiline, labelwidth=1.8cm, labelsep=0.5cm, leftmargin=3cm] 
\item[$\CK{}$] The class of combinatorial K3 surfaces.
\item[$\CKmo{}$] The subclass of $\CK{2d}$ consisting of surfaces in $(-1)$-form.
\end{description}
We write $\CK{2d}$ and $\CKmo{2d}$ for the subclasses of surfaces of degree $2d$.
\end{notation}

\begin{definition}
A combinatorial K3 surface $Y$ is said to be in $(-1)$-\emph{form} if the following holds. For any integral curve $D\subset Y^\nu$ in the preimage of the double locus, we have $D^2 = -1$ if $D$ is smooth and $D^2=1$ if $D$ is singular (and hence a nodal rational curve), cf. the corresponding notion for models in \cite[Definition~2.6]{HL22}.
\end{definition}

The following proposition summarizes \cite[Lemma~2.11]{HL22} and \cite[\S~5.1]{Laz08}.

\begin{proposition}\label{proposition minus one form triangulation}
Taking the dual intersection complex defines a bijection between the set of isomorphism classes of combinatorial K3 surfaces in $(-1)$-form and triangulations of the sphere $\IS^2$ such that no vertex has valency greater than~$6$. In particular, a combinatorial K3 surface in $(-1)$-form is uniquely determined by its dual intersection complex.\qed
\end{proposition}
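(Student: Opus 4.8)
The plan is to split the claimed bijection into three parts. The map is well defined on isomorphism classes: by Definition~\ref{definition d semistable type iii k3 surface}(4) the dual intersection complex $\Gamma(Y)$ is a triangulation of $\IS^2$, an isomorphism of surfaces induces an isomorphism of dual complexes, and surfaces with inequivalent dual complexes are manifestly non-isomorphic. So it remains to show: (a) if $Y$ is in $(-1)$-form then no vertex of $\Gamma(Y)$ has valency exceeding $6$; (b) a combinatorial K3 surface in $(-1)$-form is determined up to isomorphism by $\Gamma(Y)$ (which is exactly the final clause of the statement); and (c) every triangulation of $\IS^2$ with all valencies at most $6$ equals $\Gamma(Y)$ for some combinatorial K3 surface $Y$ in $(-1)$-form.

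For (a) I would argue numerically on one component. Fix $i$, put $V := Y_i^\nu$ (a smooth rational surface), and let $r$ be the valency of the vertex $i$; we may assume $r \ge 3$, as otherwise there is nothing to prove. By Remark~\ref{remark log calabi yau surfaces} the preimages of the double curves form an anticanonical cycle $D = D_1 + \dots + D_r \in |-K_V|$ on $V$, in which consecutive components meet transversally in a single point and there are no further intersections, so $e(D) = r$. The $(-1)$-form hypothesis gives $D_j^2 \ge -1$ for every $j$, whence $K_V^2 = D^2 = \sum_j D_j^2 + 2r \ge r$. Noether's formula for a rational surface yields $e(V) = 12 - K_V^2$, and since $V \setminus D$ is the complement of an anticanonical cycle on a rational surface its topological Euler characteristic is nonnegative --- equivalently the charge of the log Calabi--Yau pair $(V,D)$ is $\ge 0$, cf.~\cite[\S5.1]{Laz08} --- so
\[
0 \ \le\ e(V \setminus D) \ =\ e(V) - e(D) \ =\ 12 - K_V^2 - r \ \le\ 12 - 2r,
\]
i.e.\ $r \le 6$. (If some $D_j$ is a singular double curve then $D_j^2 = 1$ while its contribution to $e(D)$ drops by one, so the same chain of inequalities still gives $r \le 6$.)

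For (c), given a triangulation $G$ with all valencies $\le 6$, I would build $Y$ by glueing: to a vertex of valency $r$ one attaches an anticanonical pair $(V,D)$ with $D$ a cycle of $r$ curves of self-intersection $-1$ --- the degree-$6$ toric del Pezzo surface with its boundary hexagon when $r = 6$, its degenerations for smaller $r$ --- and identifies these pieces along their boundary cycles according to the edges of $G$, choosing the glueing to be $d$-semistable and then replacing it by the distinguished maximal point of its locally trivial deformation class; this preserves the $(-1)$-form and the dual complex and produces a combinatorial K3 surface with $\Gamma(Y) = G$. That such a glueing is possible, and conversely that every combinatorial K3 surface in $(-1)$-form arises in this fashion, is exactly what~\cite[\S5.1]{Laz08} together with~\cite[Lemma~2.11]{HL22} assemble. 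For (b) one then invokes the rigidification coming from maximality: by~\cite[Lemma~2.11]{HL22} a maximal $d$-semistable type III K3 surface admits no nontrivial locally trivial deformations and is singled out by its discrete combinatorial data, so the pairs $(V_i, D_i)$ and the edge identifications --- hence $Y$ itself --- are uniquely determined by $G$.

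I expect (b) to be the real obstacle. A priori the components $Y_i^\nu$ carry moduli --- for valency $\le 4$ they are del Pezzo surfaces with positive-dimensional families --- and $d$-semistability, being the triviality of a line bundle on the rational (hence unobstructed) double curves, does not on its own cut this down; it is the maximality hypothesis that collapses all of it to a point, and the substance of the argument lies in making that rigidification mesh correctly with the $(-1)$-normal form and with the classification of admissible boundary cycles in~\cite[\S5.1]{Laz08}. The valency bound (a), by contrast, is the elementary numerical constraint extracted above.
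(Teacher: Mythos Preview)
Your proposal is correct and aligns with the paper's treatment: the paper does not give a proof but simply records the statement as a summary of \cite[Lemma~2.11]{HL22} and \cite[\S5.1]{Laz08}, which are precisely the inputs you invoke for (b) and (c). Your explicit Euler-characteristic derivation of the valency bound in (a) is a welcome unpacking of the content of \cite[\S5.1]{Laz08} rather than a different route.
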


\begin{definition}\label{definition minus one form triangulation}
Let $G$ be a triangulation of the sphere $\IS^2$. We denote by $Y_G$ the unique combinatorial K3 surface in $(-1)$-form with dual intersection complex $G$.
\end{definition}

\begin{remark}\label{remark combinatorial nature}
The classification of combinatorial K3 surfaces in $(-1)$-form through special triangulations of the sphere	from Proposition~\ref{proposition minus one form triangulation} is a first instance of the combinatorial nature of combinatorial K3 surfaces. But not only those in $(-1)$-form are determined by combinatorial objects (at least in degree $2$, even though this is likely to hold true in higher degree as well): a general combinatorial K3 surface is determined by its \emph{curve structure}, a certain graph that captures the configuration of curves on the surface, see Definition~\ref{definition curve structure}. This fully justifies the terminology \emph{combinatorial K3 surface}. 
\end{remark}

\subsection{Type I flops}\label{section type i flops}

The Mori fan is defined in terms of the birational geometry of models of the DNV family. Flops between these models can be factored into special flops, which are called type I and type II flops respectively, 
and automorphisms, see \cite[Corollary~6.34]{HL22}. Recall that our approach is to forget about the model and only deal with the associated combinatorial K3 surface. The aforementioned flops induce certain birational modifications on the combinatorial K3 surfaces which are called \emph{elementary modifications of type I respectively II}. For simplicity, we will abuse the terminology a little and refer to them as type I and type II flops as well, even though they are not technically flops. We will discuss type I flops in this section. Type II flops require some more preparation and will be treated in Section~\ref{section type ii flops}.

\begin{notation}\label{notation double curves}
Let $Y$ be a combinatorial K3 surface, let $Y_i\neq Y_j$ be irreducible components of $Y$, and let $C$ be a curve in their intersection. Then the preimage of $C$ under the normalization has two one-dimensional irreducible components, one in each of the connected components $Y_i^\nu$ and $Y_j^\nu$ of $Y^\nu$. We denote them by $D_{ij}\subset Y_i^\nu$ and $D_{ji} \subset Y_j^\nu$. 
For $i=j$, that is, a component $Y_i=Y_j$ with self-intersection containing a curve $C$, we write $D_{ii}^1 \cup D_{ii}^2 \subset Y_{i}^\nu$ for the one-dimensional irreducible components in the preimage of $C$. 
\end{notation}

We will give an abstract definition of type I and type II flops for combinatorial K3 surfaces, which is a translation of the usual notion of type I and II flops, see \cite[p.~8]{HL22}, to the class of combinatorial K3 surfaces. Here, we shall restrict ourselves to degree 2. We will start with type I flops, see Definition~\ref{def:type I flops final}. Type II flops will be treated in Section~\ref{section type ii flops} after interior special points have been discussed in Section~\ref{section interior special points}.

\begin{construction}\label{construction type i flop}
Assume that for some $i$ we have an irreducible curve $C_i\subset Y_i$ not contained in the double locus, whose preimage $C_i^\nu$ in the normalization $Y_i^\nu$ is a $(-1)$\nobreakdash-curve. Recall from Remark~\ref{remark log calabi yau surfaces} that the double curves give rise to an anticanonical cycle on any combinatorial K3 surface. Hence, by adjunction, $C_i^\nu$ intersects the double locus in a single point $P_i \in D_{ij}$, which is not a triple point, and the intersection is transversal. Here $i=j$ is allowed, in which case the assumption is that in the normalization, $C_i^{\nu}$ meets one of the components of the preimage $D_{ii}^1 \cup D_{ii}^2$, say $D_{ii}^1$. We then perform the following construction. 

If $i \neq j$,  we set $Y'^{\nu}_k:=Y^{\nu}_k$ for $k \neq i,j$. We have also natural identifications\footnote{Let us point out that these identifications will preserve so-called interior special points, see Definition~\ref{definition interior special points}. This is however not relevant for the construction of the flop in the type I case.} of the components $D_{ki}, D_{kj}$ and $D'_{ki}, D'_{kj}$ of the components of the anticanonical cycle.  We define $Y'^{\nu}_i$ as the blow-down of $Y_i^{\nu}$ along the preimage of the $(-1)$-curve $C_i$. We further define $Y'^{\nu}_j$ as the blow-up of $Y_j$ in the preimage\footnote{This will be called the interior special point on $D_{ji}$ in the next section.} of the point where $C_i$ meets $Y_j$.

If $i=j$, we perform the analogous construction on $Y^{\nu}_i$, i.e. blow-down along the preimage of $C_i$ and blow-up in the point of $D_{ii}^2$ lying over $P_i$. We have now constructed a union $Y^{\prime\nu}:=\cup_i Y^{\prime\nu}_i$ and a birational map $\phi:Y^\nu \ratl Y^{\prime\nu}$ that restricts to a blow-down respectively a blow-up $Y_i^\nu \ratl Y^{\prime \nu}_i$ on a given component. Notice that this is actually abuse of notation, for so far we have not yet defined a surface $Y'$ that $Y^{\prime\nu}$ would be the normalization of (and similarly for the components $Y_i^{\prime\nu}$). This comes next: Since $\phi$ is an isomorphism at the generic point of each double curve $D_{k\ell}$, the gluing of the double curves along $Y^\nu \to Y$ induces in a unique way a gluing $Y^{\prime\nu} \to Y'$ to a singular surface $Y'$ and $Y^{\prime\nu}$ is identified with the normalization of $Y$.  We note that by construction, the Carlson homomorphism \cite[p. 10]{HL22} of $Y'$ is trivial, $c_{Y'}=1$. Hence, $Y'$ is maximal and thus a combinatorial K3 surface.
\end{construction}

We depict this construction in Figure \ref{figure type i flop}.
            
\begin{definition} \label{def:type I flops final}
We say that the combinatorial K3 surface $Y'$ from Construction~\ref{construction type i flop} {\em arises from $Y$ by the type I flop in the curve $C_i$} and we refer to the birational map $\phi: Y \dashrightarrow Y'$ as a 
{\em type I flop} of combinatorial K3 surfaces.   
\end{definition}  

\begin{remark}
The type I flops described here are the restrictions of type I flops of maximal Kulikov models. Indeed, let $Y \subset \sY$ where $\sY$ is the unique maximal smoothing, see \cite[Proposition 2.13]{HL22}. 
Then we can perform a type I flop along $C_i$ resulting in a birational map $\tilde \phi: \sY \dashrightarrow \sY''$ with special fiber $Y''$. This is again a maximal degeneration by \cite[Proposition 2.16]{HL22}. By inspection, there is a natural isomorphism $Y'' \cong Y'$ and thus a natural isomorphims $\sY'' \cong  \sY'$ which shows that $\phi$ is indeed the restriction of a type I flop of maximal Kulikov models. 
\end{remark}

\begin{figure}\centering
\begin{tikzpicture}[scale=0.6]
\tikzset{mygreen/.style={green!66!black}}
  \draw (-1,0) -- (-1,4);
  \draw (0,0) -- (0,4);
  \draw[color=red] (-1,2) -- (-4,2);
  \node[above] at (-1.2,4) {$D_{ij}$};
  \node[above] at (0.2,4) {$D_{ji}$};
  \node at (-3.3,2.8) {$C_i$};

  \fill[mygreen] (-1,2) circle (3pt);
  \node[below left] at (-1,2) {$P_i$};
  \fill[mygreen] (0,2) circle (3pt);
  \node[below right] at (0,2) {$P_j$};

  \begin{scope}[xshift=10cm]
    \draw (-1,0) -- (-1,4);
    \draw (0,0) -- (0,4);
    \draw[color=red] (0,2) -- (3,2);
    \node[above] at (-1.2,4) {$D'_{ij}$};
    \node[above] at (0.2,4) {$D'_{ji}$};
    \node at (2.3,2.8) {$C'_j$};

    \fill[mygreen] (-1,2) circle (3pt);
    \node[below left] at (-1,2) {$P_i'$};
    \fill[mygreen] (0,2) circle (3pt);
    \node[below right] at (0,2) {$P_j'$};
  \end{scope}

  \draw[->, dashed, thick] (2.8,2) -- (6.3,2)
    node[midway, above] {$\phi^{\nu}$};
\end{tikzpicture}
\caption{A type I flop (on the normalizations) of combinatorial K3 surfaces.}
\label{figure type i flop}
\end{figure}
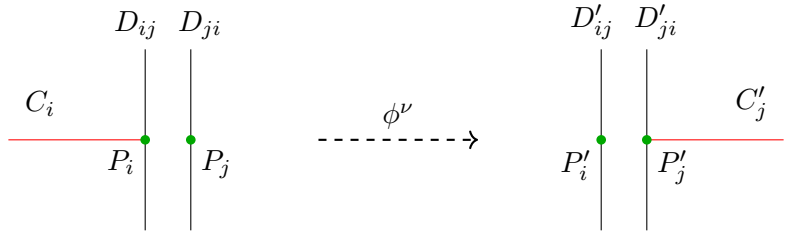

\subsection{Combinatorial K3 surfaces of degree \texorpdfstring{$2$}{2}}\label{section degree two}

In this article, we will mainly be concerned with combinatorial K3 surfaces of degree $2d=2$. Such surfaces have $d+1=3$ irreducible components, and the dual intersection complex is one of the two triangulations of $\IS^2$ with three vertices; see Figure~\ref{figure p and t}. As indicated in the figure, we will denote these triangulations by $P$ respectively $T$. Depending on its dual intersection complex, a combinatorial K3 surface of degree $2$ is thus said to be \emph{of class} $P$ resp. $T$.

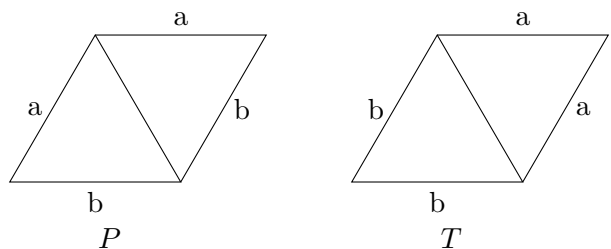
\begin{figure}[!ht]%
\begin{tikzpicture}[scale=0.75]
\draw[]    (0,1) -- ++(0:1.5) node(C1)[below]{b}--++(0:1.5)  
-- ++(120:1.5) node(A1){}--++ (120:1.5) node(top1){} --++ (240:1.5)node(B1)[left]{a} --cycle; 
\draw[] (top1.center)--++(0:1.5) node(C2)[above]{a}--++(0:1.5) --++(-120:1.5) node[right]{b}--++(-120:1.5);
\draw node at (1.75,0) {${P}$};

\draw[]    (6,1) -- ++(0:1.5) node(C3)[below]{b}--++(0:1.5)  
-- ++(120:1.5) node(A3){}--++ (120:1.5) node(top2){} --++ (240:1.5)node(B3)[left]{b} --cycle; 
\draw[] (top2.center)--++(0:1.5) node(C4)[above]{a}--++(0:1.5) --++(-120:1.5) node[right]{a}--++(-120:1.5);
\draw node at (7.75,0) {${T}$};
\end{tikzpicture}
\caption{The triangulations $P$ and $T$ of $\IS^2$.}%
\label{figure p and t}%
\end{figure}

\begin{remark}
Type I flops preserve the class of the combinatorial K3 surface, i.e. $Y$ is of class $T$ (or $P$) if and only if $Y'$ is.  
\end{remark} 

By Proposition~\ref{proposition minus one form triangulation}, there are two combinatorial K3 surfaces of degree $2$ in $(-1)$-form, one each of class $P$ resp. $T$. Following \cite[Section~2.2]{HL22}, we call them $Y_P$ and $Y_T$. Let us recall their construction and some basic facts about their geometry.

\begin{example}\label{example gothy}
By \cite[Proposition 5.2, Lemma 5.14]{Laz08} (see also \cite[Construction~2.25]{HL22} for more details and precise references), there are weak del Pezzo surfaces $\gothY_i$, $i=1,\ldots,6$ such that any connected component of the normalization of any $Y \in \CKmo{}$ is isomorphic to one of those. 
More precisely, the $\gothY_i$ come with an anticanonical cycle $\gothD_i$ having $i$ irreducible components and the components of the normalization of $Y$ are isomorphic to $(\gothY_i,\gothD_i)$ as log Calabi--Yau surfaces, see Remark~\ref{remark log calabi yau surfaces}. While the details of their construction can be found in the above references, we are mainly interested in their combinatorial description via curve structures. The latter will be defined in general in Section~\ref{section curve structures}, but for the surfaces $\gothY_i$ it is straightforward: we define the set
\[ 
C(\gothY_i):=\{ C\subset \gothY_i \mid C \text{ is an integral curve with } C^2<0, C\not\subset \gothD_i \}. 
\]
The \emph{curve structure} $\Gamma_{\gothY_i}$ of $\gothY_i$ is now defined as the dual graph of $C(\gothY_i)$ where the vertices are labeled by the self-intersection numbers of the corresponding curves. We will write $C_v$ for the curve corresponding to a vertex $v\in \Gamma_{\gothY_i}$. For each component $D$ of $\gothD_i$, we append a vertex $d$ to $\Gamma_{\gothY_i}$ and for each $v\in \Gamma_{\gothY_i}$ we add an edge joining $d$ and $v$ if $D.C_v \neq 0$. The labeled graph $\Gamma_{\gothY_i}^a$ thus obtained is called the \emph{augmented} curve structure of $\gothY_i$. 
\end{example}

For us, only the surfaces $\gothY_1$, $\gothY_2$, and $\gothY_4$ will play a role. We will next describe them geometrically. To this end, we recall the notion of an $n$-blow-up from \cite[Definition~2.24]{HL22}, for the reader's convenience. The reader may also find it helpful to consult the references from the preceding example. We remark that the geometric description of $\mathfrak{Y}_4$ given here coincides with that in \cite[Construction~2.25]{HL22}, whereas the descriptions of $\mathfrak{Y}_1$ and $\mathfrak{Y}_2$ differ, but the 
surfaces are the same.

\begin{definition}\label{def n blow up}
Let $(Y,D)$ be an anticanonical pair and let $p$ be a smooth point of $D$. If $n = 1$, the \emph{$n$-blow-up} of $Y$ in $p$ is the usual blow-up. For $n > 1$, the $n$-blow-up of $Y$ in $p$ is defined inductively as the blow-up of the $(n - 1)$-blow-up $\pi \colon Y' \to Y$ at the intersection point of the strict transform of $D$ with the exceptional set of $\pi$.

More generally, for an ordered set $(p_1, \ldots, p_k)$ of points lying in the smooth locus of $D$ and a $k$-tuple $n=(n_1,\ldots,n_k)$ of nonnegative integers, the \emph{$n$-blow-up} of $Y$ in $(p_1,\ldots,p_k)$ is defined by applying the above construction independently at each $p_j$, with multiplicity $n_j$.
\end{definition}

\begin{example}\label{example geometric description gothy1}
The surface $\mathfrak{Y}_1$ is obtained by performing the $8$-blow-up of~$\mathbb{P}^2$ in an inflection point of a nodal cubic in the sense of Definition~\ref{def n blow up}, and the boundary divisor $\mathfrak{D}_1$ is the strict transform of that cubic. This can be read off from Figure~\ref{figure gothy1 curve structure}: After blowing down the eight curves that form the horizontal chain in the curve structure, we obtain a smooth rational surface of Picard rank one. It must therefore be isomorphic to $\mathbb{P}^2$. The strict transform of $\mathfrak{D}_1$ is a nodal cubic, and the remaining curve from the curve structure becomes a flex line of this nodal cubic. 

Note that the construction is independent of the choice of a flex line. A nodal cubic has three inflection points (see e.g. \cite[Proposition~9 (2)]{Oka07}), but the group of automorphisms of $\P^2$ that fix the cubic acts transitively on the flex lines. This can easily be seen in coordinates: We can assume that the cubic is given by the equation
\[
x^3+y^3-3xyz=0
\]
A straightforward calculation gives that the flex points are $[1:-1:0]$, $[1:-\omega:0]$, and $[1:-\omega^2:0]$ where $\omega$ denotes a primitive third root of unity. They form one orbit of the cyclic group generated by the  automorphism $[x:y:z]\mapsto [x:\omega y:\omega^2 z]$.
\end{example}

\begin{example}\label{example geometric description gothy2}
We obtain the surface $\mathfrak{Y}_2$ from the Hirzebruch surface $\mathbb{F}_2$ together with two distinct sections \mbox{$S_1$, $S_2$} of the ruling that are smooth curves of self-intersection $S_1^2=S_2^2=2$ and intersect in two distinct points. One selects a fiber of the ruling that does not meet the two points of $S_1 \cap S_2$ and performs a $(3,3)$-blow up in the two points where the fiber meets $S_1$ and $S_2$. The resulting surface is $\mathfrak{Y}_2$, and $\mathfrak{D}_2$ is defined as the strict transform of the union of $S_1$ and $S_2$. 

Again, this construction is independent of the choices made. A convenient model to verify this claim is the cone in $\P^3$ over a smooth plane conic. This cone is also the image of the map given by the linear system $|C_0 + 2f|$ where $C_0 \subset \F_2$ denotes the unique $(-2)$-curve and $f$ denotes a fiber of the ruling. Note that $S_1,S_2 \in |C_0 + 2f|$, i.e. they are obtained as hyperplane sections of the cone that do not meet the vertex. The automorphism group of $\P^3$ fixing the cone (equivalently, that of $\F_2$) acts transitively on ordered pairs of such hyperplanes. The stabilizer of such a pair acts transitively on the complement of the image of $S_1\cap S_2$ in the base curve of the ruling. Note that the latter is isomorphic to $\P^1\setminus\{0,\infty\}$ and the stabilizer contains a copy of $\G_m$ acting in the standard way on this.
\end{example}

\begin{example}\label{example geometric description gothy4}
To construct $\mathfrak{Y}_4$, one starts with the product surface $\mathbb{P}^1 \times \mathbb{P}^1$, together with two distinct lines $A_1$ and $A_2$ from one ruling and two distinct lines $B_1$ and $B_2$ from the other ruling. Set $D_4:=A_1\cup A_2\cup B_1\cup B_2$. We choose two more lines, one from each ruling, distinct from the lines in $D_4$, and blow up the four points where they meet $D_4$. The resulting surface is $\mathfrak{Y}_4$, and $\mathfrak{D}_4$ is the strict transform of $D_4$. Note that all choices made are equivalent under the automorphism group of $\P^1\times \P^1$.
\end{example}

\begin{figure}[!ht]%
\begin{tikzpicture}

\node[vertex] (A1) at (-3,0.5) {};
\node[vertex] (B1) at (-2,0.5) {};
\node[vertex] (C1) at (-1,0.5) {};
\node[vertex] (D1) at (0,0.5) {};
\node[vertex] (E1) at (1,0.5) {};
\node[vertex] (F1) at (2,0.5) {};
\node[vertex] (G1) at (3,0.5) {};
\node[vertex] (H1) at (4,0.5) {};
\node[special] (I1) at (5,0.5) {};
\node[vertex] (J1) at (-1,1.5) {};

\draw (A1) -- (B1) -- (C1) -- (D1) -- (E1) -- (F1) -- (G1) -- (H1) -- (I1)
(C1) -- (J1);

\node[above, yshift=2mm, xshift=-1.3mm, font=\small] at (H1) {$-1$};

\end{tikzpicture}
\caption{The augmented curve structure  of the surface $\gothY_1$.}%
\label{figure gothy1 curve structure}%
\end{figure}

\begin{figure}%
\begin{tikzpicture}

\node[special] (A) at (0,0) {};
\node[vertex] (B) at (1,0) {};
\node[vertex] (C) at (2,0) {};
\node[vertex] (D) at (3,0) {};
\node[vertex] (E) at (4,0) {};
\node[vertex] (F) at (5,0) {};
\node[vertex] (G) at (6,0) {};
\node[vertex] (H) at (7,0) {};
\node[special] (I) at (8,0) {};
\node[vertex] (J) at (4,1) {};

\draw (A) -- (B);
\draw (B) -- (C);
\draw (C) -- (D);
\draw (D) -- (E);
\draw (E) -- (F);
\draw (F) -- (G);
\draw (G) -- (H);
\draw (H) -- (I);
\draw (E) -- (J);

\node[above, yshift=2mm, xshift=-1.3mm, font=\small] at (B) {$-1$};
\node[above, yshift=2mm, xshift=-1.3mm, font=\small] at (H) {$-1$};

\end{tikzpicture}
\caption{The augmented curve structure of the surface $\gothY_2$.}%
\label{figure gothy2 curve structure}%
\end{figure}

\begin{figure}%
\begin{tikzpicture}

\node[special] (A2) at (0,-1) {};
\node[vertex] (B2) at (1,-1) {};
\node[vertex] (C2) at (2,-1) {};
\node[vertex] (D2) at (3,-1) {};
\node[special] (E2) at (4,-1) {};
\node[special] (A3) at (0,-2) {};
\node[vertex] (B3) at (1,-2) {};
\node[vertex] (C3) at (2,-2) {};
\node[vertex] (D3) at (3,-2) {};
\node[special] (E3) at (4,-2) {};

\draw (A2) -- (B2);
\draw (B2) -- (C2);
\draw (C2) -- (D2);
\draw (C2) -- (C3);
\draw (D2) -- (E2);
\draw (A3) -- (B3);
\draw (B3) -- (C3);
\draw (C3) -- (D3);
\draw (D3) -- (E3);

\node[above, yshift=2mm, xshift=-1.3mm, font=\small] at (B2) {$-1$};
\node[above, yshift=2mm, xshift=-1.3mm, font=\small] at (D2) {$-1$};
\node[below, yshift=-2.5mm, xshift=-1.3mm, font=\small] at (B3) {$-1$};
\node[below, yshift=-2.5mm, xshift=-1.3mm, font=\small] at (D3) {$-1$};

\end{tikzpicture}

\caption{The augmented curve structure of the surface $\gothY_4$.}%
\label{figure gothy4 curve structure}%
\end{figure}
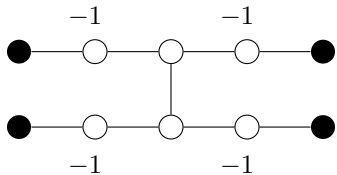

The surfaces $\gothY_1$, $\gothY_2$, and $\gothY_4$ show up in the construction of the models in $(-1)$-form, $Y_P$ and $Y_T$, as we will explain next.

\begin{example}\label{example yp}
The surface $Y_P$ has three identical irreducible components $Y_1=Y_2=Y_3= \gothY_2$, whose augmented curve structure is shown in Figure~\ref{figure gothy2 curve structure}. The black vertices represent the components of the anticanonical cycle and the white vertices have self-intersection $(-2)$ unless otherwise indicated. Call $D_{ij}$, $D_{ik}$ with $\{i,j,k\}=\{1,2,3\}$ the boundary components of $Y_i$. We 
denote the singular points of the anticanonical cycle on~$Y_i$ by $Q_{i1}$ and  $Q_{i2}$.  We note that there is an automorphism of $Y_i$ which interchanges the components $D_{ij}$ and $D_{ik}$, while leaving the points $Q_{i1}$ and  $Q_{i2}$ fixed (as a set). 
Similarly, there is an automorphism leaving
the components $D_{ij}$ and $D_{ik}$ fixed (as sets), while swapping the points $Q_{i1}$ and  $Q_{i2}$. 
The existence of these automorphisms can, for example, be deduced from the description given in Example~\ref{example geometric description gothy2}. In terms of the ruled surface $\mathbb F_2$
this corresponds to an automorphism interchanging $S_1$ and $S_2$, but fixing their intersection points and the ruling (as a set), or,
vice versa, leaving  $S_1$, $S_2$ and the ruling fixed (as sets)  and interchanging the two points of intersection. In this way we obtain a group $\mu_2 \times \mu_2$ of automorphisms acting on the surface $\mathfrak Y_2$.

Further, let $P_{ij}\in D_{ij}$ denote the points corresponding to the intersections of the white $(-1)$-vertices in the curve structure with their adjacent black vertices. 
These points will be called interior special points, see Definition~\ref{definition interior special points}. 

We can now describe how to construct the surface $Y_P$. We start with an ordered triple $Y_i$, where $i=1,2,3$, of components isomorphic to $\mathfrak Y_2$. 
For each component $Y_i$, we fix a labelling $D_{i,j}$ for $j \in \{1,2,3\} \setminus\{i\}$  of the components of the anticanonical cycle and the singular points 
$Q_{i,k}$ for $k=1,2$ of this cycle. 
The surface $Y_P$ is now obtained by gluing the $Y_i$ along certain isomorphisms of the components of the anticanonical cycles.
Since these components are smooth rational curves, giving isomorphisms between them is tantamount to specifying ordered $3$-tuples of points on them that are to be identified under the isomorphism. 
More precisely, we choose isomorphisms $D_{12} \cong D_{21}$, $D_{23} \cong D_{32}$ and $D_{31} \cong D_{13}$ which are specified by mapping $Q_{i,k}$ to $Q_{i+1,k}$ for $i=1,2,3$ and 
$j=1,2$ as well as sending interior special points to interior special points. 
Using the symmetry groups $\mu_2 \times \mu_2$ of the surfaces $Y_i$, which we have described above, shows that the resulting surface $Y_P$ is independent of any choices that have been made for the labelling of the 
components $D_{ij}$ and the singular points $Q_{ik}$. 
\end{example}

\begin{example}\label{example yt}
The surface $Y_T$ has irreducible components $Y_1=Y_2=\gothY_1$ and $Y_3$ a nonnormal surface with normalization $\gothY_4$. We call $Y_3$ the \emph{special component} and $Y_1$, $Y_2$ the \emph{nonspecial components}. The curve structures of $Y_1=Y_2=\gothY_1$ and \mbox{$(Y_3)^\nu=\gothY_4$} are shown in Figure~\ref{figure gothy1 curve structure} and Figure~\ref{figure gothy4 curve structure}, respectively.
To obtain the surface $Y_T$, one glues the components as follows. First, one glues two opposite boundary components to one another, e.g. those represented by the black vertices in the bottom row of Figure~\ref{figure gothy4 curve structure}, such that the intersection points with the remaining boundary components are matched and also the intersection points with the curves corresponding to unique white vertices adjacent to the glued boundary components are matched. This specifies the gluing uniquely. After this first gluing, the remaining boundary components, i.e. those represented by the black vertices in the top row of Figure~\ref{figure gothy4 curve structure}, have become nodal rational curves. These are then glued to the boundary curves of the nonspecial components, again matching singular points and intersection points with neighboring curves in the curve structure.
\end{example}

We extend the above terminology to all combinatorial K3 surfaces of class $T$: The unique\footnote{The existence and uniqueness can directly be read off from the triangulation $T$.} irreducible component with four boundary components in the anticanonical cycle is called \emph{special} while the other components are called \emph{nonspecial}. By construction, a type I flop preserves special (and hence also nonspecial) components.

\subsection{Curve structures}\label{section curve structures}

A curve structure is a combinatorial object associated to a combinatorial K3 of degree $2$. We have met first instances in Example~\ref{example gothy}, here we define them for arbitrary components of surfaces in $\CK2$. Recall the following result from \cite[Corollary~6.32]{HL22}.
\begin{theorem}\label{theorem type i flops to minus one model}
Let $Y$ be a combinatorial K3 surface of degree $2$. Then there is a sequence
\begin{equation}\label{eq sequence of type i flops}
Y_G^\nu \ratl \ldots \ratl Y^\nu
\end{equation}
of type I flops where $G\in\{P,T\}$. In particular, $\CK2(G)$ is the subclass of $\CK2$ of surfaces that can be obtained from $Y_G$ by a sequence of type I flops.
\end{theorem}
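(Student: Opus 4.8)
The statement is Theorem~\ref{theorem type i flops to minus one model}, which I recognize as a quotation of \cite[Corollary~6.32]{HL22}, so the real task is to explain why the cited result translates into the language of combinatorial K3 surfaces developed above. The plan is to pass back and forth between a combinatorial K3 surface $Y$ of degree $2$ and the unique maximal Kulikov model $\sY \to S$ whose central fiber it is (Proposition~2.13 of \cite{HL22}), apply the known factorization result for models, and then descend the factorization to the central fibers via the type I flops of Construction~\ref{construction type i flop}. Concretely, here is how I would proceed.

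First I would recall that by \cite[Corollary~6.32]{HL22} any model $\sY$ of the DNV family in degree $2$ is connected to the $(-1)$-model $\sY_G$ (with $G \in \{P,T\}$ determined by the dual intersection complex) by a chain of type I flops of maximal Kulikov models, possibly composed with automorphisms; since automorphisms do not change the combinatorial K3 surface up to isomorphism, they can be absorbed. Next, by the Remark following Definition~\ref{def:type I flops final}, each type I flop of maximal Kulikov models restricts on central fibers to a type I flop of combinatorial K3 surfaces in the sense of Construction~\ref{construction type i flop}; conversely, a type I flop of combinatorial K3 surfaces extends uniquely to one of models. Reading off the central fibers of the chain $\sY_G \ratl \cdots \ratl \sY$ thus yields exactly the sequence~\eqref{eq sequence of type i flops} on normalizations $Y_G^\nu \ratl \cdots \ratl Y^\nu$ (note that the type I flop, being an isomorphism near the double curves, is determined on $Y$ by its effect on $Y^\nu$, so there is no loss in phrasing~\eqref{eq sequence of type i flops} on normalizations). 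The "in particular" clause is then immediate: a surface obtained from $Y_G$ by type I flops has dual intersection complex still equal to $G$ (type I flops preserve the class, as noted in the Remark after Figure~\ref{figure p and t}), so $\CK2(G)$ is closed under and exhausted by such flops.

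The step I expect to require the most care is the faithful dictionary between flops of models and elementary modifications of combinatorial K3 surfaces — in particular, checking that the degree-$2$ restriction in Construction~\ref{construction type i flop} does not lose any of the flops appearing in \cite[Corollary~6.32]{HL22}, i.e.\ that every type I flop of a degree-$2$ model is of the shape handled there (blow-down of a $(-1)$-curve meeting the double locus transversally in one non-triple point, followed by the compensating blow-up on the neighbouring component), and that the maximality/combinatorial-K3 property is preserved at every stage (this last point is exactly the content of the closing lines of Construction~\ref{construction type i flop}, invoking triviality of the Carlson homomorphism, together with \cite[Proposition~2.16]{HL22}). Once this correspondence is in place, the theorem is a direct transcription of the cited corollary, and no genuinely new argument is needed beyond bookkeeping of the class $G$ and the passage $\sY \leftrightarrow Y$.
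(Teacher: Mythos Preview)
Your proposal is correct and matches the paper's approach: the paper simply cites \cite[Corollary~6.32]{HL22} as the source of the result, adding only the one-line remark (immediately after the theorem) that in \cite{HL22} the sequence of flops is phrased on the surfaces themselves rather than on the normalizations, and that with the present Definition~\ref{def:type I flops final} this is equivalent. Your write-up is more detailed than the paper's---you spell out the model/central-fiber dictionary via \cite[Proposition~2.13]{HL22} and the Remark after Definition~\ref{def:type I flops final}---but the underlying argument is the same citation-plus-translation, so there is nothing substantively different to compare.
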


In \cite{HL22}, the sequence \eqref{eq sequence of type i flops} was between the surfaces themselves and not the normalizations. With our definition, however, of a type I flop, giving a flop on the normalizations is equivalent to giving a flop on the glued surfaces.

Recall from Example~\ref{example gothy} that $C(\gothY_i)$ is the set of square-negative non-anticanonical curves on $\gothY_i$. 
Restricting a birational morphism as in \eqref{eq sequence of type i flops} to the surface $\gothY_i$, one obtains a set $C(Y_i)$ for each irreducible component $Y_i$ of $Y$ by adding the exceptional curve in case of a blow-up and removing it in case of a blow-down, see \cite[4.2]{HL22}. It is shown in \cite[Lemma~4.9]{HL22} that the set $C(Y_i)$ is independent of the sequence of type~I flops $Y_G \ratl Y$. The definition of the curve structure of $Y_i$ proceeds now analogously to the one of $\gothY_j$ in Example~\ref{example gothy}.

\begin{definition}\label{definition curve structure} 
Let $Y_1$ be an irreducible component of a surface $Y \in \CK2$. We define the \emph{curve structure} $\Gamma_{Y_1}$ of $Y_1$ to be the dual graph of $C(Y_1)$ with vertices labeled by the self-intersection numbers of the corresponding curves. Let $Y_1^\nu \to Y_1$ be the normalization, and let $D$ be an integral curve in the anticanonical cycle of $Y^\nu$. 
For each such $D$, we append a vertex $d$ to $\Gamma_{Y_1}$ and for each $v\in \Gamma_{Y_1}$ we add an edge joining $d$ and $v$ if $D.C_v  \neq 0$ where $C_v$ is the curve corresponding to $v$. The labeled graph $\Gamma_{Y_1}^a$ thus obtained is called the \emph{augmented} curve structure of $Y_1$. We define the curve structure respectively the augmented curve structure of $Y$ as the disjoint union of the curve structures respectively augmented curve structures of its components.
\end{definition}

\begin{remark}
We will usually interpret $C(Y)$ as a set of curves on the normalization $Y^\nu$. In particular, all intersection numbers will be computed on $Y^\nu$.
\end{remark}

We use the word 'vertex' sometimes as a synonym for the curve it represents. The black vertices are referred to as \emph{anticanonical vertices}. Note that an anticanonical vertex is however not the same as an anticanonical curve. The former is just an irreducible component of the anticanonical cycle.

We come to the most important property of curve structures, proven in \cite{HL22}.

\begin{theorem}\label{theorem curve structures determine surface}
A degree $2$ projective combinatorial K3 surface is uniquely determined by its (augmented) curve structure.
\end{theorem}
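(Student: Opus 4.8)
The plan is to reduce the statement to the information encoded in the augmented curve structure of each irreducible component, using the exact sequence \eqref{eq maximality sequence} together with Theorem~\ref{theorem type i flops to minus one model}. First I would recall that by Definition~\ref{definition combinatorial k3 surface} a combinatorial K3 surface $Y$ of degree~$2$ is maximal, so the sequence
\[
0 \to \Pic(Y) \to \bigoplus_i H^2(Y_i^\nu,\Z) \to \bigoplus_C H^2(C^\nu,\Z)
\]
is exact. Thus $\Pic(Y)$ is recovered from the Picard groups of the normalized components $Y_i^\nu$ together with the restriction maps to the double curves $C$; equivalently, once we know each $Y_i^\nu$ as a polarized anticanonical pair, together with the gluing isomorphisms of the double curves, the surface $Y$ is determined up to isomorphism by Ferrand pushout (Remark after Definition~\ref{definition combinatorial k3 surface}). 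So it suffices to show that the augmented curve structure $\Gamma_Y^a$ determines (i) each component $Y_i^\nu$ as a log Calabi--Yau surface $(Y_i^\nu, D_i)$, and (ii) the gluing data along the double curves.

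For step (i): by Theorem~\ref{theorem type i flops to minus one model}, $Y^\nu$ is obtained from $Y_G^\nu$ with $G\in\{P,T\}$ by a sequence of type~I flops, and on a fixed component $Y_i^\nu$ this sequence restricts to a chain of blow-ups and blow-downs at interior points of the anticanonical cycle. The starting surfaces are the explicit weak del Pezzo surfaces $\gothY_1,\gothY_2,\gothY_4$ of Example~\ref{example gothy}, whose augmented curve structures are drawn in Figures~\ref{figure gothy1 curve structure}, \ref{figure gothy2 curve structure}, \ref{figure gothy4 curve structure}. The set $C(Y_i)$ of square-negative non-anticanonical curves, together with the intersection numbers encoded by the edges and the self-intersection labels of the vertices, recovers the effect of the blow-ups/blow-downs: a $(-1)$-vertex adjacent to exactly one anticanonical vertex records where a blow-down can be performed, and the resulting surface of Picard rank one is forced to be $\P^2$ or $\F_2$ or $\P^1\times\P^1$ as in Examples~\ref{example geometric description gothy1}--\ref{example geometric description gothy4}. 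One then argues that the isomorphism type of the anticanonical pair $(Y_i^\nu, D_i)$ is reconstructed inductively by contracting the $(-1)$-vertices, using that in each of the three base cases the reconstruction is independent of the choices made (transitivity of the relevant automorphism groups, established in those examples), and that the position of each infinitely near point being blown up is pinned down combinatorially by which anticanonical component it lies on and which curves in $C(Y_i)$ pass through it.

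For step (ii): the gluing of $Y_i^\nu$ to $Y_j^\nu$ along a double curve $D_{ij}\cong D_{ji}$ is an isomorphism of the two anticanonical components, which are smooth or nodal rational curves. Since these are rational of arithmetic genus $0$, the gluing is determined by the images of finitely many marked points: the triple points (intersections with the other anticanonical components, visible in $\Gamma_Y^a$ as the anticanonical vertices adjacent to the given one), the nodes if the curve is nodal, and the intersection points with curves of $C(Y_i)$ meeting $D_{ij}$, which are again read off from the edges of $\Gamma_Y^a$ incident to the vertex $d_{ij}$. As explained in Examples~\ref{example yp} and~\ref{example yt}, matching these marked points on both sides specifies the gluing isomorphism uniquely (for $Y_P$ one additionally uses the $\mu_2\times\mu_2$ automorphisms to see independence of the chosen labelling, and the projectivity of $Y$ rigidifies any residual ambiguity). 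Combining (i) and (ii), the pushout $Y$ of the semisimplicial object $\bigl(\coprod_C C \rightrightarrows \coprod_i Y_i^\nu\bigr)$ is determined up to isomorphism by $\Gamma_Y^a$, which proves the theorem.

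I expect the main obstacle to be step (ii), specifically showing that the marked points visible in the augmented curve structure genuinely suffice to rigidify the gluing isomorphisms. Along a smooth rational double curve a priori there is a positive-dimensional family of isomorphisms, and one must check that the combinatorially visible marked points (triple points, nodes, and intersections with $C(Y_i)$-curves) always cut this down to a single choice, or that any residual moduli are killed by the projectivity hypothesis together with the $d$-semistability condition. This is precisely the point where the explicit degree~$2$ analysis of Examples~\ref{example yp}--\ref{example yt}, and the interior special points of Section~\ref{section interior special points}, enter, and where the restriction to degree~$2$ is used.
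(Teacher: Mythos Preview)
The paper does not give a self-contained argument here: its proof consists of citing \cite[Proposition~7.5]{HL22} for class~$P$ and \cite[Proposition~7.6]{HL22} for class~$T$, together with the observation that the augmented curve structure distinguishes the dual complex. Your proposal is therefore necessarily different in form --- you are sketching what a self-contained proof would look like rather than appealing to the earlier paper.

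Your overall strategy is the right one, and in fact tracks the argument carried out in \cite{HL22}: reduce via the Ferrand pushout to (i) recovering each normalized component $(Y_i^\nu,D_i)$ as an anticanonical pair from its augmented curve structure, and (ii) pinning down the gluing isomorphisms along the double curves. The ingredients you invoke --- the explicit base surfaces $\gothY_1,\gothY_2,\gothY_4$, the transitivity statements of Examples~\ref{example geometric description gothy1}--\ref{example geometric description gothy4}, and the interior special points of Section~\ref{section interior special points} --- are exactly the ones needed. Two comments. First, the opening appeal to the exact sequence~\eqref{eq maximality sequence} is a slight detour: that sequence identifies $\Pic(Y)$, not $Y$ itself, and you correctly switch to the pushout viewpoint immediately after. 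Second, and more substantively, your step~(i) is thinner than you seem to realize. The claim that one can reconstruct $(Y_i^\nu,D_i)$ by successively contracting $(-1)$-vertices down to a recognizable minimal model, with each blow-up center ``pinned down combinatorially,'' is precisely the nontrivial content that \cite{HL22} establishes through a systematic case analysis of the degree-$2$ curve structures; it is not a formal consequence of Theorem~\ref{theorem type i flops to minus one model}, since the curve structure records only the endpoint of a flop sequence, not the sequence itself, and one must verify that any two routes back to a $\gothY_i$ yield isomorphic pairs. Your self-assessment that step~(ii) is the delicate part is well-founded --- the rigidification of each gluing by three special points (Lemma~\ref{lemma interior special points}) is indeed where the degree-$2$ hypothesis is used --- but step~(i) requires comparable care, which is why the paper defers both to \cite{HL22}.
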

\begin{proof}
For a curve structure of class $P$, this holds by \cite[Proposition~7.5]{HL22}.
For a curve structure of class $T$, this is \cite[Proposition~7.6]{HL22}. Since the augmented curve structure can distinguish the dual complex of the surface, the claim follows.
\end{proof}

\section{Interior special points}\label{section interior special points}

We will need the notion of \emph{special points} in order to specify the gluing condition of type II flops. The key idea is that there is a unique way of gluing two smooth rational curves when the gluing identifies three marked points on each. These are the special points. We give an explicit description of the special points for degree $2$ combinatorial K3 surfaces.

Then we discuss certain distinguished rulings on components of combinatorial K3 surfaces of degree $2$. This will help understand the effect of a type II flop on the curve structure of a combinatorial K3 surface of degree $2$. We also use it to describe how interior special points emerge in a type II flop. More precisely, it is needed to give an explicit description of a type II flop that does not resort to the Carlson map, cf. Remark~\ref{remark carlson map}.

\begin{lemma}\label{lemma interior special points}
Let $Y$ be a combinatorial K3 surface of degree $2$ and let $\nu:Y^\nu \to Y$ be its normalization. If $D \subset Y^\nu$ is an irreducible curve in the preimage of the double locus, then there is a unique point $p\in D$ such that $\nu(p)$ is not a triple point and every curve in the curve structure of $Y$ either meets $D$ in $p$ or not at all.
\end{lemma}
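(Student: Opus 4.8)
The statement asserts the existence and uniqueness of a distinguished point on each boundary curve $D$ of $Y^\nu$, intrinsically characterized by the intersection pattern with the curve structure. The strategy is to reduce to the $(-1)$-form representatives $Y_P$, $Y_T$ via Theorem~\ref{theorem type i flops to minus one model}, where the existence of such a point is visible from the explicit descriptions in Examples~\ref{example yp} and~\ref{example yt}, and then to track the point along a sequence of type~I flops. First I would reduce to checking the statement component by component: since the curve structure of $Y$ is the disjoint union of the curve structures of its irreducible components, and each boundary curve $D$ lies on a single component $Y_i^\nu$, it suffices to produce, for each component, a point $p \in D$ satisfying the two properties relative to $C(Y_i)$.

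\textbf{Base case.} For $Y = Y_G$ with $G \in \{P,T\}$, the components are among $\gothY_1, \gothY_2, \gothY_4$, and their augmented curve structures are displayed in Figures~\ref{figure gothy1 curve structure},~\ref{figure gothy2 curve structure},~\ref{figure gothy4 curve structure}. In each case there is exactly one white vertex adjacent to each given anticanonical (black) vertex $d$ corresponding to $D$, and that white vertex has label $-1$; the corresponding $(-1)$-curve meets $D$ transversally in a single smooth point $p$ of $D$ (by the adjunction argument already used in Construction~\ref{construction type i flop}: a $(-1)$-curve on a log Calabi--Yau surface pair meets the anticanonical cycle in one point, not a triple point). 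One reads off from the graphs that no other vertex of $C(\gothY_i)$ has an edge to $d$, so every curve in the curve structure either passes through $p$ or misses $D$ entirely; and any two curves through $p$ are the ones incident to $d$. Uniqueness on the base case: if $p' \neq p$ also worked, then since there is at least one curve meeting $D$ (namely the $(-1)$-curve through $p$), property forces that curve to pass through $p'$ as well, impossible as it meets $D$ transversally in one point; if $D$ met no curve at all, the characterization would be vacuous, but inspection of the three graphs shows every anticanonical vertex has at least one incident white vertex, so this does not occur.

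\textbf{Inductive step.} Given $Y' \ratl Y$ a type~I flop as in Construction~\ref{construction type i flop}, with the analogous point $p'$ on each boundary curve of $Y'^\nu$, I would show the point is transported to a valid point on each boundary curve of $Y^\nu$. The flop restricts, on the relevant components, to a blow-down $Y_i'^\nu \to$ (blow-down image) followed by a blow-up, and is an isomorphism away from finitely many points; on components $Y_k'^\nu = Y_k^\nu$ with $k \neq i,j$ nothing changes, and the identifications of the boundary components $D_{ki} \cong D'_{ki}$ noted in the construction preserve the candidate points (this is exactly the footnote remark in Construction~\ref{construction type i flop} that the identifications preserve interior special points). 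On the two components $Y_i, Y_j$ that are modified, the blow-down removes the $(-1)$-curve $C_i^\nu$ and the blow-up on $D_{ji}$ introduces a new exceptional $(-1)$-curve meeting $D'_{ji}$ in one point, which becomes the candidate point there; on $D_{ij}$, the blow-down is an isomorphism near the generic point of $D_{ij}$ and the candidate point (the former $P_i$, image of where $C_i^\nu$ met $D_{ij}$, now carrying the image of whatever the next curve in the chain is) transports accordingly. One must check that after this the intersection pattern with $C(Y_i)$ again has the required form — that is, exactly the curves adjacent to a fixed anticanonical vertex meet $D$ and they meet it in a common point. This follows because the blow-down/blow-up changes $C(Y_i)$ only by removing/adding one vertex, and the effect on edges to anticanonical vertices is local at the single point in question (the description of the effect of type~I flops on curve structures, referenced as forthcoming in Section~\ref{section type i flops on curve structures}, makes this precise; for the present statement one only needs that incidences to a fixed $d$ are unaffected outside the flopped point). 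Uniqueness for $Y$ then follows from uniqueness for $Y'$ plus the fact that the birational map is an isomorphism at the generic point of each $D$, so a second valid point for $Y$ would pull back to a second valid point for $Y'$.

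\textbf{Main obstacle.} The delicate part is the inductive step on the modified components: one must verify that blowing down $C_i^\nu$ and blowing up a point of $D_{ji}$ does not create a spurious second curve meeting some boundary curve in a point different from the candidate, nor destroys the property that all curves meeting a given boundary curve do so at one common point. This is essentially a local analysis at the flopped points on the two surfaces $Y_i^\nu$ and $Y_j^\nu$, combined with the combinatorial bookkeeping of how edges of the (augmented) curve structure change — i.e. precisely the content that Section~\ref{section type i flops on curve structures} will supply. An alternative, cleaner route would be to invoke the Carlson map directly: the interior special point is, by definition in the next section, determined by the Carlson homomorphism, and its uniqueness and the stated characterization may be extracted from that more structural description rather than by the flop induction; but since the paper's stated aim is to give a geometric, Carlson-free account in degree $2$, the flop-tracking argument above is the one in the spirit of the text.
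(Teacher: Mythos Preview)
Your approach is exactly the one the paper takes: verify the claim for the components $\gothY_1,\gothY_2,\gothY_4$ of $Y_P$ and $Y_T$ by inspection of the augmented curve structures, then propagate along type~I flops via Theorem~\ref{theorem type i flops to minus one model}. The paper's own proof is two sentences (``This holds for the surfaces $\gothY_i$, $i=1,2,4$, \ldots\ and is preserved under type~I flops. Then the claim follows from Theorem~\ref{theorem type i flops to minus one model}.''), so you have simply written out the details the paper leaves implicit; in particular your identification of the ``main obstacle'' as the local bookkeeping of incidences under blow-up/blow-down is precisely what the paper defers to Section~\ref{section type i flops on curve structures}.
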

\begin{proof}
This holds for the surfaces $\gothY_i$, $i=1,2,4$, from Example~\ref{example gothy} and is preserved under type I flops. Then the claim follows from Theorem~\ref{theorem type i flops to minus one model}.
\end{proof}

\begin{definition}\label{definition interior special points}
We put ourselves in the setup of the previous lemma. We call the point $p$ the \emph{interior special point} of the curve $D$. A point in $D$ is \emph{special} if it is either interior special or maps to one of the triple points under $\nu$.
\end{definition}

Abstractly, the maximality condition determines the special interior points in all degrees by the study of the Carlson map. We were however unable to describe the special interior points explicitly in this approach, which is why we restricted to degree~$2$ and gave the explicit description above.

\subsection{Rulings on combinatorial K3 surfaces of class \texorpdfstring{$T$}{T}}\label{section rulings class t}

We describe certain distinguished linear systems on the nonspecial components of combinatorial K3 surfaces of degree $2$ and class $T$. In most cases, these are rulings and this is what we start with. 

Let us consider the surface $\gothY_1$ and denote by $v_1,\ldots,v_9$ the basis of $\Pic(\gothY_1)$ given by the elements of the curve structure, numbered according to Figure~\ref{figure isotropic vector gothy1} (where we wrote $i$ instead of $v_i$ for the sake of readability). Then $\ell:=(0,1,2,2,2,2,2,2,1)$ is a vector of square zero and we choose a line bundle $L$ on $\gothY_1$ representing $\ell$. Let us denote by $C_i\subset \gothY_1$ the curve representing the vertex $v_i$ for $i=1,\ldots, 9$.

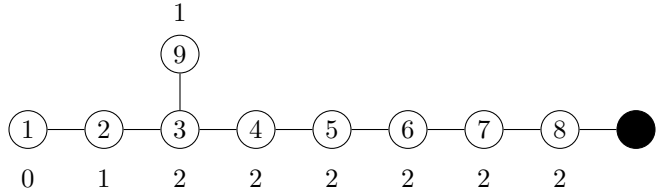
\begin{figure}[!h]%
\begin{tikzpicture}[font=\small,
  bigvertex/.style={
    circle,
    draw=black,
    fill=white,
    minimum size=5mm,  
    inner sep=0pt
  },
  biganticanonical/.style={
    circle,
    draw=black,
    fill=black,
    minimum size=5mm,
    inner sep=0pt
  }
]

\node[bigvertex] (A1) at (-3,0.5) {1};
\node[bigvertex] (B1) at (-2,0.5) {2};
\node[bigvertex] (C1) at (-1,0.5) {3};
\node[bigvertex] (D1) at (0,0.5) {4};
\node[bigvertex] (E1) at (1,0.5) {5};
\node[bigvertex] (F1) at (2,0.5) {6};
\node[bigvertex] (G1) at (3,0.5) {7};
\node[bigvertex] (H1) at (4,0.5) {8};
\node[biganticanonical] (I1) at (5,0.5) {}; 
\node[bigvertex] (J1) at (-1,1.5) {9}; 

\draw (A1) -- (B1) -- (C1) -- (D1) -- (E1) -- (F1) -- (G1) -- (H1) -- (I1)
      (C1) -- (J1);

\node[below, yshift=-4mm, xshift=0mm] at (A1) {$0$};
\node[below, yshift=-4mm, xshift=0mm] at (B1) {$1$};
\node[below, yshift=-4mm, xshift=0mm] at (C1) {$2$};
\node[below, yshift=-4mm, xshift=0mm] at (D1) {$2$};
\node[below, yshift=-4mm, xshift=0mm] at (E1) {$2$};
\node[below, yshift=-4mm, xshift=0mm] at (F1) {$2$};
\node[below, yshift=-4mm, xshift=0mm] at (G1) {$2$};
\node[below, yshift=-4mm, xshift=0mm] at (H1) {$2$};

\node[above, yshift=3mm, xshift=0mm] at (J1) {$1$};

\end{tikzpicture}

\caption{The isotropic vector $\ell$ on $\gothY_1$. The vertices are numbered as indicated, the coefficients of $\ell$ at each vertex are below the vertex for $v_1,\ldots, v_8$ respectively above for $v_9$.}%
\label{figure isotropic vector gothy1}%
\end{figure}

\begin{lemma}\label{lemma zero curve on gothy1}
The line bundle $L$ is nef, its linear system is one-dimensional and gives rise to a fibration $\gothY_1 \to \P^1$. The anticanonical curve $D$ is a $2$-section of the fibration, the curve $C_1$ is a section, and the union of all other curves $C_i$ in the curve structure is the support of the unique singular fiber.
\end{lemma}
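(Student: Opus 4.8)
The plan is to work on the surface $\gothY_1$ using its explicit description from Example~\ref{example geometric description gothy1}: blow down the horizontal chain $C_2,\dots,C_8,I$ (the eight $(-1)$- and $(-2)$-curves forming the chain ending at the anticanonical vertex) to obtain $\P^2$ with $\gothD_1$ mapping to a nodal cubic and $C_9$ mapping to a flex line. First I would compute the pullback of $\sO_{\P^2}(1)$ under this sequence of eight blow-ups in terms of the basis $v_1,\dots,v_9$ of $\Pic(\gothY_1)$, and check that it equals $\ell=(0,1,2,2,2,2,2,2,1)$; this identifies $L$ with the strict transform of a line in $\P^2$ passing through the blown-up points with the appropriate multiplicities (an $8$-fold infinitely near point on the cubic). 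From this geometric picture, nefness of $L$ is immediate (it is the pullback of an ample class on $\P^2$), as is $L^2=0$, and $h^0(L)=2$ follows because a pencil of lines through the relevant point of $\P^2$ pulls back to the linear system $|L|$ — the eight infinitely near base conditions are automatically satisfied by exactly the members of this pencil — so $|L|$ defines the fibration $\gothY_1\to\P^1$.

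Next I would read off the intersection numbers $L\cdot D$, $L\cdot C_1$, and $L\cdot C_i$ for $i\geq 2$ directly from $\ell$ using the known intersection form on the curve structure (the curves $C_i$ form a chain/tree with the self-intersections indicated, and $D$ is adjacent to $C_1$ and $C_8$ as in Figure~\ref{figure isotropic vector gothy1}). Concretely: $D$ meets the generic line in $\P^2$ in $3$ points but one of them is absorbed at the infinitely near base point, giving $L\cdot D=2$, so $D$ is a $2$-section; $C_1$ corresponds to the flex line minus... — more cleanly, $L\cdot C_1=1$ because $C_1$ is the component of $\gothD_1^c$-complement that survives as a $(0)$-curve meeting a fiber once, so $C_1$ is a section; and $L\cdot C_i=0$ for $i=2,\dots,9$ since these are exactly the curves contracted or the flex line, all of which lie in fibers. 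Hence $C_2\cup\dots\cup C_9$ (together with multiplicities read off from $\ell$) is contained in fibers.

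Finally, to see that $\bigcup_{i\geq 2}C_i$ is the support of the \emph{unique} singular fiber, I would argue that a smooth fiber $F$ of $\gothY_1\to\P^1$ is an anticanonical... — rather, by adjunction $F$ is a smooth rational curve with $F^2=0$, and the Euler characteristic / Noether formula for the rational elliptic-type fibration (or a direct Leray computation, since $\gothY_1$ is rational with $K_{\gothY_1}=-\gothD_1$ and $-K_{\gothY_1}\cdot F=D\cdot F=2$) forces exactly one reducible fiber; alternatively, the chain $C_2,\dots,C_8$ together with $C_9$ has the right total class $\ell$ and is connected of arithmetic genus $0$, so it is a genuine fiber, and any other reducible fiber would produce extra square-negative non-anticanonical curves not present in the curve structure, a contradiction. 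I expect the main obstacle to be the bookkeeping in the first paragraph: verifying that the specific vector $\ell$ is the pullback of $\sO_{\P^2}(1)$ and correctly tracking which point of the nodal cubic is blown up (it must be the flex point, so that the flex line $C_9$ acquires the stated multiplicity $1$ and becomes a fiber component), since getting the multiplicities in $\ell$ to match requires care with the order of the infinitely near blow-ups.
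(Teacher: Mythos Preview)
Your overall strategy coincides with the paper's: use the description of $\gothY_1$ as the $8$-blow-up of $\P^2$ at an inflection point of a nodal cubic, and identify $|L|$ with the pencil of lines through that point. However, several details in your execution are incorrect and would derail the computation.

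First, the chain contracted to reach $\P^2$ is $C_1,\dots,C_8$, not ``$C_2,\dots,C_8,I$''; the vertex $I$ is the anticanonical curve $D=\gothD_1$, which maps to the nodal cubic and is certainly not blown down. Second, and more seriously, the assertion that $\ell$ equals $\pi^*\sO_{\P^2}(1)$ is false: one has $(\pi^*\sO(1))^2=1$ whereas $\ell^2=0$, so your nefness argument ``it is the pullback of an ample class'' collapses. The correct identification is that $\ell$ is the class of a \emph{fiber}, i.e.\ the strict transform of a line through the flex point $p$; in terms of the standard blow-up basis this is $\pi^*\sO(1)$ minus the total transform of the first exceptional divisor. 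Nefness then follows either from this fiber interpretation or directly from $\ell\cdot C_i\ge 0$ and $\ell\cdot D\ge 0$.

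Third, your description of $C_1$ is muddled: $C_1$ is not a $(0)$-curve, nor is it related to the flex line (that is $C_9$). As the paper says in one line, $C_1$ is the strict transform of the \emph{first} exceptional divisor $E_1$. After the first blow-up, $E_1$ is the $(-1)$-section of $\F_1\to\P^1$; the second blow-up is at the point $E_1\cap\tilde D$, turning $E_1$ into a $(-2)$-curve but leaving it a section; all subsequent blow-up centers lie on $\tilde D$ and are disjoint from (the strict transform of) $E_1$. This also shows immediately that blow-ups $2$ through $8$ all occur in the single fiber over the flex line, giving uniqueness of the singular fiber without the Euler-characteristic or ``extra negative curves'' arguments you sketch. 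Finally, $D\cdot \ell = 2$ because a general line through $p$ meets the cubic in two further points --- this part of your argument is fine.
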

\begin{proof}
This can be read off from the description in Example~\ref{example geometric description gothy1}. The sought-for fibration is given by pulling back the linear system of lines through the chosen inflection point of the nodal cubic in $\P^2$. Indeed, $C_1$ is the exceptional divisor of the first blow-up (and thus a section) and $\ell$ corresponds to the unique singular fiber which is the pullback of the flex line.
\end{proof}

\begin{remark}\label{remark unique fibration}
Thanks to the explicit description of the cone of curves, see Proposition~\ref{proposition cone generators} below, one can verify with a computer program that $\ell$ is in fact the only primitive isotropic vector in the nef cone of $\gothY_1$. This is however not needed in what follows.
\end{remark}

\begin{corollary}\label{corollary square zero curve class t}
Let $Y\in\CK2$ be a surface of class $T$ and let $Y_T \ratl Y$ be a composition of type I flops. Let $S \subset Y$ be a nonspecial component and consider the rational map \mbox{$\phi:\gothY_1 \ratl S$} 
given by 
the restriction of the above composition. If $S$ has Picard rank $\rho(S) \geq 2$,  there is a unique fibration $S\to \P^1$ that corresponds to the one from 
Lemma~\ref{lemma zero curve on gothy1} under~$\phi$. This fibration has a unique singular fiber if $\rho(S)\geq 3$ and is smooth otherwise.
\end{corollary}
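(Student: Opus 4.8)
The plan is to transport the fibration of Lemma~\ref{lemma zero curve on gothy1} along the birational map $\phi \colon \gothY_1 \dashrightarrow S$ and then verify that the transported linear system still defines a fibration. The composition $Y_T \dashrightarrow Y$ restricts on $\gothY_1$ to an alternating sequence of blow-ups and blow-downs, each at a smooth point of the (strict transform of the) anticanonical cycle, cf.\ Construction~\ref{construction type i flop} and the discussion after Theorem~\ref{theorem type i flops to minus one model}. So $\phi$ factors as $\gothY_1 \xleftarrow{\ \psi\ } W \xrightarrow{\ \sigma\ } S$ for a common resolution $W$ obtained by a chain of point blow-ups. First I would pull the isotropic class $\ell$ back to $W$ via $\psi^*$ and then push it down to $S$ via $\sigma_*$; call the result $\ell_S \in \Pic(S)$. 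Because all the centers of the blow-ups/blow-downs lie on the anticanonical cycle $\gothD_1$ and each exceptional curve meets $\gothD_1$, one checks that $\ell_S$ is again an isotropic class, and moreover that it lies in the nef cone of $S$ provided $\rho(S) \geq 2$. The key point for nefness is that $\ell$ meets every curve in the curve structure of $\gothY_1$ with value $\ell \cdot C_i \geq 0$ and $\ell \cdot D = 2$, and by Proposition~\ref{proposition cone generators} the curves in the augmented curve structure generate the cone of curves, so it suffices to check that $\ell_S$ pairs non-negatively with each such curve on $S$. Since the curve structure of $S$ is obtained from that of $\gothY_1$ by adding/removing exceptional vertices, and the isotropic vector $\ell$ has coefficient $0$ on the section $C_1$ (which is the vertex where all the blow-up/blow-down activity of the composition happens, it being the unique $(-1)$-vertex of $\gothY_1$), these intersection numbers are preserved.

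Next I would argue that $\ell_S$ actually induces a fibration. By Lemma~\ref{lemma zero curve on gothy1}, the linear system $|L|$ on $\gothY_1$ is one-dimensional and basepoint free; the fibration $\gothY_1 \to \P^1$ has $D$ as a $2$-section, $C_1$ as a section, and the remaining curves of the curve structure as the support of its unique singular fiber. Pulling back to $W$ and pushing forward to $S$, I would use the projection formula and the fact that $\sigma$ is a birational morphism of smooth rational surfaces to conclude that $h^0(S, \ell_S) = h^0(\gothY_1, L) = 2$ and that the moving part of $|\ell_S|$ is again basepoint free (any potential basepoint would have to lie on an exceptional curve contracted by $\sigma$, but those are components of fibers, not basepoints). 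Hence $|\ell_S|$ defines a morphism $S \to \P^1$ whose general fiber is the strict transform of a general fiber of $\gothY_1 \to \P^1$, so a smooth rational curve; this is the sought fibration, and its uniqueness follows from the uniqueness in Lemma~\ref{lemma zero curve on gothy1} via $\phi$ (distinct fibrations on $S$ would pull back to distinct isotropic nef classes on $\gothY_1$).

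Finally I would analyze the singular fibers. The fibration $\gothY_1 \to \P^1$ has exactly one singular fiber, supported on $C_2,\ldots,C_9$. Under the composition $\gothY_1 \dashrightarrow S$, blowing up a point on a fiber component subdivides that fiber (increasing the number of fiber components), while blowing down a $(-1)$-curve that is a fiber component merges components; blow-ups and blow-downs at points of the anticanonical cycle $\gothD_1$ away from the singular fiber affect only the $2$-section $D$ and the section $C_1$. Since the composition is a net blow-up when $\rho(S) > \rho(\gothY_1) = 9$, i.e.\ $\rho(S) \geq 10$—wait, more precisely the count is in terms of the number of components: $\gothY_1$ already has the minimal configuration with one singular fiber, and every further type~I flop lands a new $(-1)$-curve somewhere in the configuration. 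The statement to extract is: if $\rho(S) \geq 3$ the configuration of negative curves on $S$ still contains a reducible fiber, hence a singular fiber, and in fact it is the only one because every exceptional curve introduced sits over the same point of $\P^1$ (it meets only curves already in a single fiber); if $\rho(S) = 2$ then $S$ has Picard rank $2$ and the only negative curves are the section and one other, forcing all fibers to be irreducible, i.e.\ the fibration is smooth. I expect the main obstacle to be this last bookkeeping: showing that after an arbitrary sequence of type~I flops, all the reducible fiber components still lie over a single point of the base, so that the fibration has at most one singular fiber. This requires tracking, through Construction~\ref{construction type i flop}, that every newly created $(-1)$-curve meets the existing fiber configuration rather than the section or the $2$-section in a way that would create a new fiber; the fact that the $(-1)$-curve of a type~I flop meets the anticanonical cycle in a single interior special point (Lemma~\ref{lemma interior special points}) and is otherwise confined should pin this down, but it needs care.
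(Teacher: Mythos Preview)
Your proposal contains a factual error that undermines the argument. You write that $C_1$ is ``the unique $(-1)$-vertex of $\gothY_1$'' and hence the locus where all blow-up/blow-down activity occurs. This is incorrect: by Figure~\ref{figure gothy1 curve structure} the unique $(-1)$-curve on $\gothY_1$ is $C_8$, not $C_1$; the curve $C_1$ is a $(-2)$-curve (and the section of the fibration, per Lemma~\ref{lemma zero curve on gothy1}). Your justification for why intersection numbers with $\ell$ are preserved --- ``$\ell$ has coefficient $0$ on the section $C_1$'' --- therefore does not apply, because the flop activity does not take place at $C_1$.

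The correct observation, which is essentially the paper's entire proof, is the opposite of what you assert: the $(-1)$-curve $C_8$ lies in the \emph{singular fiber}, and the interior special point of $D$ (the only point ever blown up under a type~I flop into this component) is $C_8 \cap D$, which also lies over the singular value in~$\P^1$. Inductively, every blow-up center and every contracted $(-1)$-curve sits inside the (strict transform of the) unique singular fiber. Hence the fibration is automatically preserved on the complement of that fiber, and by construction has at most one singular fiber; when $\rho(S)=2$ that fiber has been smoothed out. Your transport of $\ell$ via a common resolution and the nefness check via Proposition~\ref{proposition cone generators} (a forward reference in the paper's logical order) are not needed once this is seen. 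Indeed, your final paragraph identifies exactly the right point (``all the reducible fiber components still lie over a single point of the base''), but you never resolve it --- and its resolution \emph{is} the proof.
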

\begin{proof}
From the description in Example~\ref{example geometric description gothy1}, using the assumption $\rho(S)\geq 2$, we see that $S$ is obtained from $\gothY_1$ by blow-ups or blow-downs in the unique singular fiber of the fibration $\gothY_1 \to \P^1$. This proves the existence of the fibration, which by construction has at most one singular fiber. The claim follows.
\end{proof}

\begin{corollary}\label{corollary unique square zero curve class t}
Let $S$ be a nonspecial component of a combinatorial K3 surface of class $T$ and let $p\in S$ be the node of the anticanonical curve. If $\rho(S) \geq 2$, there is a unique irreducible (square zero) curve in the linear system of $L$ passing through $p$.
\end{corollary}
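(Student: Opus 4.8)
The plan is to exploit the fibration produced by Corollary~\ref{corollary square zero curve class t}. Since $\rho(S)\ge 2$, the pencil $|L|$ is base-point-free and its associated morphism $\pi\colon S\to\P^1$ is a fibration; hence every point of $S$, and in particular the node $p$, lies on exactly one member of $|L|$, namely the fibre $F_p:=\pi^{-1}(\pi(p))$. So it is enough to show that $F_p$ is irreducible. If $\rho(S)=2$ this is immediate, because by Corollary~\ref{corollary square zero curve class t} all fibres of $\pi$ are then smooth. So I would assume $\rho(S)\ge 3$, in which case $\pi$ has, again by Corollary~\ref{corollary square zero curve class t}, a unique singular fibre $F_0$, and the task becomes to prove $p\notin F_0$.

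The key point I would establish is that no curve in the curve structure of $S$ passes through the node $p$ of the anticanonical curve $D$. Granting this, one finishes as follows: on $\gothY_1$ the support of the singular fibre is a union of curves from the curve structure by Lemma~\ref{lemma zero curve on gothy1}, and, just as in the proof of Corollary~\ref{corollary square zero curve class t}, a type I flop restricts on a component to a (possibly trivial) blow-up or blow-down carried out inside the unique singular fibre, which alters its support only by the exceptional curve of the modification, itself again a curve of the curve structure. Hence the support of $F_0$ consists of curves in the curve structure of $S$, none of which contains $p$, so $p\notin F_0$, and $F_p$ is a smooth irreducible fibre, of square zero since $L^2=0$.

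To prove the claim about $p$, I would combine Lemma~\ref{lemma interior special points} with the observation that the node $p$ is distinct from the interior special point $p_D$ of $D$: by the lemma every curve of the curve structure that meets $D$ meets it only at $p_D$, so if $p\ne p_D$ then no such curve can contain the point $p\in D$. On $\gothY_1$ one reads off from Example~\ref{example geometric description gothy1} together with the augmented curve structure in Figure~\ref{figure gothy1 curve structure} that exactly one curve of the curve structure meets $D$, namely a $(-1)$-curve meeting it transversally in a single point; that point is therefore a smooth point of $D$, hence different from the node. Finally, a type I flop restricts on a component to the blow-down of a $(-1)$-curve meeting $D$ transversally in one non-triple point, or to a blow-up at the interior special point of $D$; in either case it is an isomorphism near $p$, so it fixes the node and carries a smooth interior special point of $D$ to a smooth interior special point of the strict transform of $D$ away from the node. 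Since by Theorem~\ref{theorem type i flops to minus one model} every nonspecial component of a class-$T$ combinatorial K3 surface arises from $\gothY_1$ by such flops, the claim follows.

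The main obstacle I anticipate is the verification in this last step that the centres of all the blow-ups and blow-downs occurring along the way, namely the interior special points and the transversal non-triple intersection points with $(-1)$-curves, stay away from the node of $D$, and, relatedly, that the property of being supported on curve-structure curves is preserved under the whole process. Both come down to the explicit geometry of $\gothY_1$ recalled in Example~\ref{example geometric description gothy1} and the characterisation of curve-structure curves in Lemma~\ref{lemma interior special points}, so the argument is essentially routine, but it needs to be carried out with some care.
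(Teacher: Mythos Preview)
Your argument is correct. The overall plan coincides with the paper's: use Corollary~\ref{corollary square zero curve class t} to reduce to showing that the node $p$ does not lie on the unique singular fibre. Where you diverge is in how you verify this. The paper argues geometrically via the explicit $\P^2$-model of Example~\ref{example geometric description gothy1}: the fibration is the pencil of lines through the chosen inflection point $q$ of the nodal cubic, the singular fibre is the preimage of the flex line, and the fibre through $p$ is the strict transform of the line $\overline{pq}$, which is visibly distinct from the flex line (the latter cannot pass through the node by B\'ezout). Since by the proof of Corollary~\ref{corollary square zero curve class t} all modifications from $\gothY_1$ to $S$ take place inside the singular fibre, this smooth fibre survives unchanged on $S$. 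You instead argue combinatorially: the singular fibre is supported on curves of the curve structure, and by Lemma~\ref{lemma interior special points} such curves meet $D$ only at the interior special point, which you then check is different from the node. Both routes are sound; the paper's is shorter because it avoids the inductive bookkeeping you flag as the ``main obstacle'', while yours has the merit of staying entirely within the curve-structure formalism and of making explicit the general principle (curves of the curve structure avoid the node) that underlies Definition~\ref{def:specialpointsT}. Note incidentally that the inequality $p\neq p_D$ can be obtained in one line from Lemma~\ref{lemma interior special points} itself, since $\nu(p_D)$ is not a triple point whereas the node of $D$ maps to one; this would let you bypass the inductive verification you sketch at the end.
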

\begin{proof}
The linear system $|L|$ is base point free by Corollary~\ref{corollary square zero curve class t}. Thus, the point $p$ lies in a unique fiber of the corresponding morphism $S \to \P^1$. It remains to show that this fiber is irreducible. 
This follows from the description in Example~\ref{example geometric description gothy1}, since the only (possibly) singular fiber is the preimage of the flex line while the fiber through~$p$ is the strict transform of the line in $\P^2$ through the node and the chosen inflection point.
\end{proof}

\begin{example}\label{example class t picard rank one}
By the enumeration of models, the only possibility for a surface $S$ as in  Corollary~\ref{corollary square zero curve class t}  with Picard number $1$ is the following. With the curves in the curve structure numbered as in Figure~\ref{figure isotropic vector gothy1}, contract successively the curves $C_8$, $C_7$, $C_6$, $C_5$, $C_4$, $C_3$, $C_2$, $C_1$ (respectively their strict transforms). In this case, $S\isom \P^2$ and the anticanonical curve is a nodal cubic curve. The curve structure is given by one vertex, which is represented by a line. The final contraction, call it $\wt S \to S$, is shown in Figure~\ref{figure final contraction class t} at the level of curve structures. Lines passing through the interior special point on $S$ form a one-dimensional linear system, and there is a unique such line $F$ passing through the node of the anticanonical curve. The figure illustrates that this linear system is exactly the image of the linear system of the $0$-curve on $S_1$ and the curve from Corollary~\ref{corollary unique square zero curve class t} is exactly the strict transform $\wt F$ of $F$.

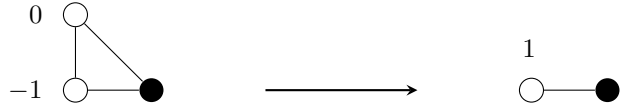
\begin{figure}[!h]%
\begin{tikzpicture}[>=stealth]

\node[vertex] (A) at (-2,1.5) {};
\node[vertex] (B) at (-2,0.5) {};
\node[special] (C) at (-1,0.5) {};

\begin{scope}[xshift=3cm]
\node[vertex] (D) at (1,0.5) {};
\node[special] (E) at (2,0.5) {};

\end{scope}

\draw (A) -- (B) -- (C) -- (A)
(D) -- (E);

\node[left, yshift=0mm, xshift=-3mm, font=\small] at (A) {$0$};
\node[left, yshift=0mm, xshift=-3mm, font=\small] at (B) {$-1$};

\node[above, yshift=3mm, xshift=-0.3mm, font=\small] at (D) {$1$};

\draw[->, thick] (0.5,0.5) -- (2.5,0.5);

\end{tikzpicture}
\caption{The contraction of $C_1$.}%
\label{figure final contraction class t}%
\end{figure}

\end{example}

\begin{definition}\label{def:specialpointsT}
Let $S$ be a nonspecial component of a combinatorial K3 surface of class $T$ and let $p\in S$ be the node of the anticanonical curve $D$. Let $\pi:\wt S \to S$ be the blow-up in the node and 
let $E:=\pi^{-1}(p)$ be the exceptional curve. Then we define an interior special point $\wt p$ of $E$ as follows. If $\rho(S) \geq 2$, there is a unique irreducible square zero curve $F$ in the 
linear system $|L|$ passing through $p$ by Corollary~\ref{corollary unique square zero curve class t}. We take $\wt p$ to be the unique intersection point $E \cap \wt F$ where $\wt F$ is the 
strict transform of $F$. Note that $\wt p$ is indeed interior for if $\wt F$ met the singular points of $\pi^{-1}(D)$, the curve $F$ would have to meet some branch of the node with multiplicity $\geq 2$, hence $F.D \geq 3$ contradicting the adjunction formula.
If $\rho(S)=1$, we are in the situation of Example~\ref{example class t picard rank one}, in particular, $S\isom \P^2$. 
Here, we take $\wt p$ to be the unique intersection point $E \cap \wt{F}$ where $\wt{F}$ is the strict transform of the line $F$ joining the interior special point of $S$ with the node of the anticanonical curve.
\end{definition}

Note that if $\wt S$ is an irreducible component of a combinatorial K3 surface (necessarily of class $P$), then $\pi^*D = \wt D + E$ is the anticanonical cycle where $\wt D$ is the strict transform of the anticanonical curve $D$. Then the point $\wt p$ is indeed an interior special point according to our previous definition (Definition~\ref{definition interior special points}): it is the intersection of the $(-1)$-curve $\wt F$ with $E$ and $\wt F$ is in the curve structure by Corollary~\ref{corollary negative curves in curve structure}.

\subsection{Rulings on combinatorial K3 surfaces of class \texorpdfstring{$P$}{P}}\label{section rulings class p}

We consider the surface $\gothY_2$ and denote by $v_1,\ldots,v_8$ the basis of $\Pic(\gothY_2)$ given by the elements of the curve structure, numbered according to Figure~\ref{figure isotropic vector gothy2} (where again we wrote $i$ instead of $v_i$ for the sake of readability). Then $\ell:=(1,1,1,1,1,1,1,0)$ is a vector of square zero and we choose a line bundle $L$ on $\gothY_2$ representing $\ell$. Let us denote by $C_i\subset \gothY_2$ the curve representing the vertex $v_i$ for $i=1,\ldots, 8$.

\begin{figure}[!h]%
\begin{tikzpicture}[font=\small,
  bigvertex/.style={
    circle,
    draw=black,
    fill=white,
    minimum size=5mm,  
    inner sep=0pt
  },
  biganticanonical/.style={
    circle,
    draw=black,
    fill=black,
    minimum size=5mm,
    inner sep=0pt
  }
]

\node[biganticanonical] (Z1) at (-4,0.5) {}; 
\node[bigvertex] (A1) at (-3,0.5) {1};
\node[bigvertex] (B1) at (-2,0.5) {2};
\node[bigvertex] (C1) at (-1,0.5) {3};
\node[bigvertex] (D1) at (0,0.5) {4};
\node[bigvertex] (E1) at (1,0.5) {5};
\node[bigvertex] (F1) at (2,0.5) {6};
\node[bigvertex] (G1) at (3,0.5) {7};
\node[biganticanonical] (I1) at (4,0.5) {}; 
\node[bigvertex] (J1) at (0,1.5) {8};  

\draw (Z1) -- (A1) -- (B1) -- (C1) -- (D1) -- (E1) -- (F1) -- (G1) -- (I1)
      (D1) -- (J1);

\node[below, yshift=-4mm, xshift=0mm] at (A1) {$1$};
\node[below, yshift=-4mm, xshift=0mm] at (B1) {$1$};
\node[below, yshift=-4mm, xshift=0mm] at (C1) {$1$};
\node[below, yshift=-4mm, xshift=0mm] at (D1) {$1$};
\node[below, yshift=-4mm, xshift=0mm] at (E1) {$1$};
\node[below, yshift=-4mm, xshift=0mm] at (F1) {$1$};
\node[below, yshift=-4mm, xshift=0mm] at (G1) {$1$};

\node[above, yshift=3mm, xshift=0mm] at (J1) {$0$};

\end{tikzpicture}

\caption{The isotropic vector $\ell$ on $\gothY_2$. The vertices are numbered as indicated, the coefficients of $\ell$ at each vertex are below the vertex for $v_1,\ldots, v_7$ respectively above for $v_8$.}%
\label{figure isotropic vector gothy2}%
\end{figure}
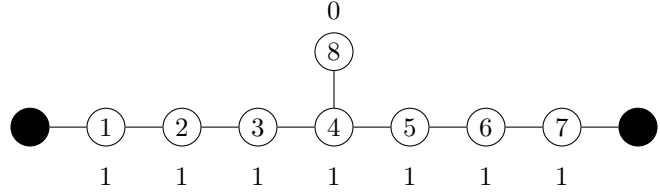

\begin{lemma}\label{lemma zero curve on gothy2}
The line bundle $L$ is nef, its linear system is one-dimensional and gives rise to a fibration $\gothY_2 \to \P^1$. The components $D_1, D_2$ of the anticanonical cycle and the curve $C_8$ are sections of the fibration, the union of all other curves $C_i$ in the curve structure is the support of the unique singular fiber.
\end{lemma}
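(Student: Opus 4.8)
The plan is to mimic the proof of Lemma~\ref{lemma zero curve on gothy1} using the concrete geometric model of $\gothY_2$ from Example~\ref{example geometric description gothy2}. Recall that $\gothY_2$ is obtained from the Hirzebruch surface $\F_2$ with two sections $S_1,S_2\in|C_0+2f|$ meeting in two points, by performing a $(3,3)$-blow-up in the two points where a chosen fiber $f_0$ of the ruling meets $S_1$ and $S_2$; the boundary $\gothD_2$ is the strict transform of $S_1\cup S_2$. First I would identify the square-zero class $\ell$ with the pullback of the ruling $f$ of $\F_2$ to $\gothY_2$. Since the two centers of the $(3,3)$-blow-up both lie on the single fiber $f_0$, the total transform of $f_0$ is the only reducible fiber, and the six exceptional curves produced by the two triple blow-ups, together with the strict transform of $f_0$ and the strict transform of the $(-2)$-curve $C_0$, make up exactly the chain of seven white vertices $C_1,\dots,C_7$ in Figure~\ref{figure isotropic vector gothy2} — so the multiplicities $(1,1,1,1,1,1,1)$ read off there are precisely the multiplicities of these components in the fiber class. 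This identifies $L$ with the pullback of the ruling, hence nef with one-dimensional base-point-free linear system giving the fibration $\gothY_2\to\P^1$.

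Next I would check the assertions about sections. The two sections $S_1,S_2$ of the ruling on $\F_2$ are disjoint from the exceptional curves after blowing up (the blow-up is at points of $S_1,S_2$, so the strict transforms are the $D_1,D_2$ of the anticanonical cycle and each still meets every fiber once), so $D_1$ and $D_2$ are sections. For $C_8$: in the curve structure $v_8$ is the $(-2)$-vertex attached to $v_4$ in the middle of the chain, and it should be identified with the strict transform of the fiber $f_0$ of the original ruling — but wait, $f_0$ itself is a component of the singular fiber, so $C_8$ must instead be the strict transform of a second generic-looking fiber or rather of the $(-2)$-curve; I would unwind Example~\ref{example geometric description gothy2} carefully to see that $C_8$ is in fact the strict transform of the $(-2)$-section $C_0$ of $\F_2$ after the blow-ups miss it, making it a section as well, with the three sections $D_1,D_2,C_8$ corresponding respectively to $S_1$, $S_2$, $C_0$. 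Finally, all remaining curves $C_i$ ($i\neq 8$) map isomorphically onto the fiber class up to multiplicity, i.e.\ they are contracted by the fibration and form the support of the unique singular fiber, as they are exactly the components of the total transform of $f_0$.

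The main obstacle I anticipate is the precise bookkeeping of which curve in the curve structure corresponds to which geometric curve on $\F_2$ or its blow-up — in particular confirming that $C_8$ is a section rather than a fiber component, and matching the multiplicity vector $\ell=(1,1,1,1,1,1,1,0)$ to the actual fiber class. A $(3,3)$-blow-up produces, at each center, a chain of three exceptional curves where only the last is a $(-1)$-curve and the first two are $(-2)$-curves meeting the strict transform of the boundary; tracking how these two chains of length three, plus the strict transform of $f_0$ and the strict transform of $C_0$, assemble into the length-seven white chain with the $v_8$ vertex hanging off $v_4$ requires one to fix orientations and verify intersection numbers. Once this dictionary is pinned down, everything else — nefness, base-point-freeness, one-dimensionality of $|L|$, the section/fiber assertions — follows immediately, exactly as in Lemma~\ref{lemma zero curve on gothy1}, from the fact that the fibration is the pullback of the $\P^1$-ruling on $\F_2$.
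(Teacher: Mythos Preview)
Your plan is correct and is exactly the paper's approach: identify the fibration with the pullback of the ruling on $\F_2$, so that the total transform of the distinguished fiber $f_0$ gives the unique singular fiber $C_1+\cdots+C_7$ (all multiplicities $1$, matching $\ell$), and $C_8$ is the strict transform of the $(-2)$-section $C_0$ (untouched by the blow-ups since $C_0$ is disjoint from $S_1,S_2$). One slip to fix: the strict transforms $D_1,D_2$ of $S_1,S_2$ are \emph{not} disjoint from the exceptional curves --- each meets the last exceptional of its $3$-blow-up (namely $C_1$ resp.\ $C_7$), as the augmented curve structure already shows --- but they are still sections simply because $D_i\cdot\pi^*f = S_i\cdot f = 1$. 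The paper also records a reverse variant that bypasses the bookkeeping you worry about: successively blow down $C_1,C_2,C_3,C_7,C_6,C_5$, observe that the images of $C_4,C_8$ then have squares $0$ and $-2$, and recognize the resulting Picard-rank-$2$ surface as $\F_2$ with its ruling.
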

\begin{proof}
Similarly to Lemma~\ref{lemma zero curve on gothy1}. Also, for $\gothY_2$, there are irreducible square zero curves in the linear system of $\ell$ that one can see in the construction of $\gothY_2$.

Alternatively, we can successively blow down the curves $C_1, C_2, C_3, C_7, C_6, C_5$. Then we are left with a smooth surface of Picard rank $2$. The images of $C_4, C_8$ (which we denote by the same symbol) are curves of square $0$ respectively $-2$ and $|C_4|$ gives a ruling with only smooth fibers. Since $C_8$ is still a section of square $-2$, we infer from the classification of surfaces that the contracted surface must be the second Hirzebruch surface $\F_2$. We obtain the sought-for fibration as the composite map $Y \rightarrow \F_2\rightarrow \P^1$.
\end{proof}

\begin{remark}
Unlike for $\gothY_1$, where the fibration was unique by Remark~\ref{remark unique fibration}, for $\gothY_2$ there are three linearly independent square zero classes on the boundary of the nef cone. In the above basis, the others are given by 
\[
\ell'=(0, 0, 1, 2, 2, 2, 2, 1) \quad \textrm{ and } \quad \ell''=(2, 2, 2, 2, 1, 0, 0, 1).
\]
For the first one, $C_2$ is a section and there are two singular fibers with support $D_1 \cup C_1$ respectively $\cup_{i\neq 1,2}C_i$, for the second one, $C_6$ is a section and there are two singular fibers with support $\cup_{i\neq 6,7} C_i$ respectively $C_7 \cup D_2$. Here, $D_1$ is the component of the anticanonical cycle adjacent to $C_1$ and $D_2$ the one adjacent to $C_7$. Thus, the class $\ell$ depicted in Figure~\ref{figure isotropic vector gothy2} defines the only fibration on $\gothY_2$ such that the components of the anticanonical cycle are all horizontal.
\end{remark}

We continue to use the notation from the beginning of Section~\ref{section rulings class p}.

\begin{corollary}\label{corollary square zero curve class p}
Let $Y\in\CK2$ be a surface of class $P$ and let $Y_P \ratl Y$ be a composition of type I flops. Let $S \subset Y$ be an irreducible component and consider the rational map $\phi:\gothY_2 \ratl S$ given by 
the restriction of the above composition. If $\phi$ is an isomorphism at the generic point of the curve $C_8$, then there is a unique fibration $S\to \P^1$ that corresponds to the one from 
Lemma~\ref{lemma zero curve on gothy2} under~$\phi$. This fibration has at most one singular fiber.
\end{corollary}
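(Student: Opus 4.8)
The plan is to mimic the proof of Corollary~\ref{corollary square zero curve class t}, replacing the role of $\gothY_1$ by $\gothY_2$ and the geometric description of Example~\ref{example geometric description gothy1} by that of Example~\ref{example geometric description gothy2} (or, equivalently, the $\F_2$-description used in the second proof of Lemma~\ref{lemma zero curve on gothy2}). First I would recall from Theorem~\ref{theorem type i flops to minus one model} that the composition $Y_P \ratl Y$ is a sequence of type~I flops, so its restriction $\phi:\gothY_2 \ratl S$ is a sequence of blow-ups and blow-downs, each of which occurs at a smooth point of the strict transform of the anticanonical cycle that is not a triple point (by Construction~\ref{construction type i flop}). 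The hypothesis is that $\phi$ is an isomorphism at the generic point of $C_8$; since $C_8$ is a section of the fibration $\gothY_2\to\P^1$ from Lemma~\ref{lemma zero curve on gothy2}, I want to argue that every modification taking place in $\phi$ is supported on the unique singular fiber, and hence the fibration descends along $\phi$ to a fibration $S\to\P^1$ with at most one singular fiber.

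The key step is therefore to show that all the blow-up/blow-down centers of $\phi$ lie on the singular fiber $F_0$ of $|L|$. Here is where the hypothesis on $C_8$ enters. By Lemma~\ref{lemma zero curve on gothy2}, the components $D_1$, $D_2$ and $C_8$ are the sections, so the anticanonical cycle $\mathfrak{D}_2 = D_1 \cup D_2$ together with $C_8$ sweeps out a subconfiguration meeting each smooth fiber transversally in three points and meeting $F_0$ only in its smooth locus. A type~I flop center is always a point of $D_{ij}$ which is the interior special point; on $\gothY_2$ the interior special points of $D_1$ and $D_2$ are precisely the points where $C_8$... no — I must be careful: the relevant flop in a sequence $Y_P \ratl Y$ is performed on the surface, and its restriction to $S$ may blow up or blow down a point of the boundary. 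The point is that the curve being blown down (when it is a blow-down on this component) is a $(-1)$-curve meeting the boundary once transversally at a non-triple point, hence (by adjunction and the fibration structure) its class has zero intersection with $L$ or it is a component of $F_0$; since $L\cdot E = 1$ only for a section, and the only section that can arise from such a curve is not available once we know $\phi$ fixes $C_8$ generically, the contracted/created curves are forced into $F_0$. Concretely, it is cleanest to run the argument on the $\F_2$-model: successively contracting $C_1,C_2,C_3,C_7,C_6,C_5$ leaves $\F_2$ with ruling $|C_4|$, the boundary $D_1 \cup D_2$ and $C_8$ all horizontal, and one checks directly from Example~\ref{example geometric description gothy2} that every further type~I modification on this component happens inside a single fiber, because otherwise a second horizontal curve with the required intersection properties would appear, contradicting $\rho$ and the configuration. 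This yields the existence of $S\to\P^1$ with at most one singular fiber.

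To finish, I would note uniqueness: a fibration corresponding to the one from Lemma~\ref{lemma zero curve on gothy2} under $\phi$ is determined by the line bundle class $\phi_*[L] \in \Pic(S)$ (well-defined since $\phi$ does not touch the generic point of $C_8$ and $L\cdot C_8 = 0$), and two fibrations with the same numerical class coincide. The statement that there is at most one singular fiber is then immediate from the fact that $\phi$ modifies $\gothY_2$ only over the one point of $\P^1$ below $F_0$.

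\textbf{Main obstacle.} The delicate point is proving that \emph{every} modification in the composition $Y_P \ratl Y$, when restricted to the component $S$, is supported over the singular value of $\gothY_2 \to \P^1$ — i.e. that the hypothesis ``$\phi$ is an isomorphism at the generic point of $C_8$'' really does confine all centers to $F_0$. One must rule out the possibility of a type~I flop whose restriction to $S$ blows up a point on a \emph{smooth} fiber (which would create a new singular fiber while keeping $C_8$ intact). The way to handle this is to track, as in \cite[4.2, Lemma~4.9]{HL22}, how the curve structure $C(S)$ and in particular the sections change under type~I flops, and to use that a type~I flop that modifies $S$ blows up the interior special point of one of its boundary components and blows down the $(-1)$-curve through that interior special point; combined with Lemma~\ref{lemma interior special points} and the explicit augmented curve structures in Figures~\ref{figure gothy2 curve structure} and~\ref{figure isotropic vector gothy2}, one sees the interior special points of $D_1, D_2$ lie on $F_0$ once $C_8$ is fixed, so the center stays on $F_0$. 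This bookkeeping is the heart of the proof; everything else is formal.
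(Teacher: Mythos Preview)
Your approach is essentially the same as the paper's. The paper's own proof is a single sentence: ``Since $\phi$ is regular at the generic point of $C_8$, it is a composition of blow-ups and blow-downs in the unique singular fiber of the morphism $\gothY_2\to\P^1$ given by the linear system of $L$, see Lemma~\ref{lemma zero curve on gothy2}. Thus, it preserves the fibration and does not alter the smooth fibers.'' You have correctly identified that the substance of the argument is the claim that all centers lie on $F_0$, and your inductive bookkeeping via Lemma~\ref{lemma interior special points} (interior special points stay on the singular fiber as long as $C_8$ is not contracted) is precisely how one makes that terse assertion rigorous. The paper simply leaves this verification to the reader.
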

\begin{proof}
Since $\phi$ is regular at the generic point of $C_8$, it is a composition of blow-ups and blow-downs in the unique singular fiber of the morphism $\gothY_2\to \P^1$ given by the linear system of $L$, see Lemma~\ref{lemma zero curve on gothy2}. Thus, it preserves the fibration and does not alter the smooth fibers. The claim follows. 
\end{proof}

\begin{figure}[!h]%
\begin{tikzpicture}

\node[special] (A) at (0,0) {};
\node[vertex] (B) at (1,0) {};
\node[vertex] (C) at (2,0) {};
\node[vertex] (E) at (4,0) {};
\node[special] (I) at (5,0) {};

\node (dots) at (3,0) {$\cdots$};

\draw (A) -- (B) -- (C) -- (dots) -- (E) -- (I);

\node[above, yshift=2mm, xshift=-1.3mm, font=\small] at (B) {$\phantom{-}0$};
\node[above, yshift=2mm, xshift=-1.3mm, font=\small] at (E) {$-1$};

\end{tikzpicture}
\caption{A general very degenerate curve structure.}%
\label{figure very degenerate curve structure}%
\end{figure}
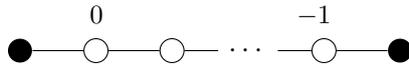

\begin{remark}
Let $\phi:\gothY_2 \ratl S$ be an iteration of blow-ups and blow-downs in interior special points. Then it is straightforward to show that $\phi$ contracts $C_8$ if and only if the curve structure on $S$ is very degenerate in the sense of \cite[Definition~4.18]{HL22}. Such curve structures are shown in Figure~\ref{figure very degenerate curve structure}. By Corollary~\ref{corollary square zero curve class p}, if $S$ is regular, that is, the opposite of very degenerate, the fibration from Lemma~\ref{lemma zero curve on gothy2} remains regular. It is easy to see that this is if and only if: as soon as the section is contracted, all fibers intersect and the fibration cannot remain regular. Thus, we obtain a geometric characterization of being regular or very degenerate as preserving or destroying a certain fibration.
\end{remark}

Similarly as in Corollary~\ref{corollary unique square zero curve class t}, one shows the following.

\begin{corollary}\label{corollary unique square zero curve class p}
Let $S$ be a component of a combinatorial K3 surface of class $P$ and suppose that $C_8$ is not contracted under the rational map $\phi:\gothY_2 \ratl S$. Then, for each triple point $p$ on $S$, there is a unique irreducible (square zero) curve in the linear system of $L$ passing through $p$.\qed
\end{corollary}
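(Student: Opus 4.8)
The plan is to mimic the proof of Corollary~\ref{corollary unique square zero curve class t}, using Corollary~\ref{corollary square zero curve class p} in place of Corollary~\ref{corollary square zero curve class t}. First I would invoke Corollary~\ref{corollary square zero curve class p}: since $C_8$ is not contracted under $\phi:\gothY_2 \ratl S$, the map $\phi$ is an isomorphism at the generic point of $C_8$, so there is a fibration $f:S\to\P^1$ corresponding under $\phi$ to the one from Lemma~\ref{lemma zero curve on gothy2}, with at most one singular fiber. The linear system $|L|$ on $S$ is the pullback of $|\sO_{\P^1}(1)|$ and is therefore base point free. Consequently, for any point $p\in S$ — in particular any triple point — there is a unique fiber $F_p$ of $f$ through $p$, and this fiber is the unique member of $|L|$ through $p$. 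It remains only to show that $F_p$ is irreducible when $p$ is a triple point.

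For the irreducibility, I would argue as follows. The only fiber of $f$ that can fail to be irreducible is the singular fiber, whose support is the image under $\phi$ of $\bigcup_{i=2}^{7} C_i$ (the curves forming the singular fiber of $\gothY_2\to\P^1$ from Lemma~\ref{lemma zero curve on gothy2}, together with the exceptional curves introduced by $\phi$). By Lemma~\ref{lemma zero curve on gothy2}, the two components $D_1, D_2$ of the anticanonical cycle of $\gothY_2$ are sections of the fibration, hence horizontal; and the same holds for their images on $S$, because $\phi$ modifies only curves inside the singular fiber. Since the fibration has at most one singular fiber, the images of $D_1$ and $D_2$ meet every fiber — in particular the singular fiber — transversally in exactly one point each, away from each other. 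A triple point $p$ of $S$ lies on two components of the anticanonical cycle; on a class $P$ surface every triple point lies on a pair of sections of this ruling. If the fiber through such a $p$ were the singular fiber, then each of the two sections through $p$ would meet that fiber in the single point $p$, forcing the two distinct sections to pass through the same point of the same fiber, which is possible, but then $p$ would be a point where a section meets a reducible fiber consisting of several curves — and one checks from the configuration that the two sections meet the singular fiber in \emph{distinct} components (the $D_i$ are each adjacent to different ends of the chain $C_2,\ldots,C_7$, cf.\ the description of $\ell',\ell''$ in the Remark following Lemma~\ref{lemma zero curve on gothy2}), so the intersection points cannot coincide. Hence the fiber through a triple point is not the singular fiber, so it is irreducible, and being a fiber of square $0$ it is the desired unique irreducible square zero curve of $|L|$ through $p$.

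The main obstacle I expect is the combinatorial bookkeeping in the last step: verifying that the two sections of the ruling passing through a given triple point meet the (unique) singular fiber in different irreducible components, and hence cannot both pass through a point of that fiber. This is where the explicit description of $\gothY_2$ and of the isotropic classes $\ell,\ell',\ell''$ is used, and it has to be tracked through the sequence of type I flops $\gothY_2\ratl S$; but since type I flops only touch curves inside the singular fiber and preserve the adjacencies recorded in the curve structure, the horizontal/vertical classification and the adjacency pattern of sections to singular-fiber components are stable under $\phi$, so the verification on $\gothY_2$ suffices. Alternatively, one can bypass this entirely by the adjunction argument used in Definition~\ref{def:specialpointsT}: if $F_p$ were reducible, the component of $F_p$ through $p$ would meet the anticanonical cycle of $S$ with total intersection $<F_p.D_{\mathrm{anti}}=2$, but it must meet both branches at the triple point, giving intersection $\geq 2$ already with those two components — a contradiction unless $F_p$ is irreducible.
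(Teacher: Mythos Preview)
Your outline is correct and matches the paper's template: invoke Corollary~\ref{corollary square zero curve class p} to get the fibration, use base-point freeness for uniqueness of the fiber through $p$, then argue irreducibility. The paper simply writes ``similarly as in Corollary~\ref{corollary unique square zero curve class t}'', so the intended irreducibility argument is the direct analogue of that proof: use the explicit construction of $\gothY_2$ in Example~\ref{example geometric description gothy2}, where the fiber chosen for the $(3,3)$-blow-up is taken \emph{away from} $S_1\cap S_2$. Hence the two triple points of $\gothY_2$ lie on smooth fibers, and since (by the proof of Corollary~\ref{corollary square zero curve class p}) $\phi$ is an isomorphism outside the singular fiber, the same holds on $S$. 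This one-line argument bypasses exactly the combinatorial bookkeeping you flag as the main obstacle.

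Your route via ``the two sections meet the singular fiber in distinct components'' is valid but more laborious than necessary. Two small corrections: the singular fiber on $\gothY_2$ has support $\bigcup_{i=1}^{7} C_i$, not $\bigcup_{i=2}^{7} C_i$; and in your alternative adjunction argument the inequality $G\cdot(D_1+D_2)<2$ is not justified as written (it could a priori equal $2$). The clean fix is to observe that any irreducible component $G$ of the singular fiber lies in the curve structure, hence by Lemma~\ref{lemma interior special points} meets each $D_i$ only at its interior special point, never at a triple point---which immediately rules out $p\in G$.
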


\begin{definition}\label{def:specialpointsP}
Let $S$ be a component of a combinatorial K3 surface of class $P$ and denote by $D_1$, $D_2$ the components of the anticanonical cycle of $S$. Then $D_1\cap D_2$ consists of two points, call them $p_1$, $p_2$. Let $\pi:\wt S \to S$ be the blow-up in $\{p_1,p_2\}$ and denote $E_i=\pi^{-1}(p_i)$ for $i=1,2$ the exceptional curves. 

We define an interior special points $\wt p_i$ of $E_i$ as follows. If the curve $C_8$ is not contracted on $S$, then for each $i=1,2$ there is a unique irreducible square zero curve $F_i$ in the linear system $|L|$ passing through $p_i$ by Corollary~\ref{corollary unique square zero curve class p}. We take $\wt p_i$ to be the unique intersection point $E_i \cap \wt F_i$ where $\wt F_i$ is the strict transform of $F_i$. As in Definition~\ref{def:specialpointsT}, one sees that this point is \emph{interior}. If $C_8$ is contracted on $S$, we write $S$ as a birational contraction $S'\to S$ of a surface $S'$ where $C_8$ is not contracted. We take the linear system there and take its image on $S$. Then, for each $i$ there is a unique irreducible curve $F_i$ in the linear system joining $p_i$ to the special interior point of $D_i$. We take $\wt p_i$ to be the unique intersection point $E_i \cap \wt{F}_i$ where $\wt{F}_i$ is the strict transform of $F_i$.
\end{definition}

Note that if $\wt S$ is an irreducible component of a combinatorial K3 surface (necessarily the special component of a surface of class $T$), then $\pi^*(D_1+D_2) = \wt D_1 + \wt D_2 + E_1 + E_2$ is the anticanonical cycle where $\wt D_i$ is the strict transform of $D_i$. Then the point $\wt p_i$ is indeed an interior special point according to Definition~\ref{definition interior special points}: it is the intersection of the $(-1)$-curve $\wt F_i$ with $E_i$ and $\wt F_i$ is in the curve structure by Corollary~\ref{corollary negative curves in curve structure}.

\section{Type II flops}\label{section type ii flops}

We now turn to type II flops of degree $2$ combinatorial K3 surfaces. First, we will describe the flops themselves in Section~\ref{section type ii in degree two}. Then, we will discuss their effect on the curve structures in Section~\ref{section type ii flops on curve structures}.

\subsection{Type II flops of combinatorial K3 surfaces of degree \texorpdfstring{$2$}{2}}\label{section type ii in degree two}

Let us consider a combinatorial $K3$ surface and a smooth component $C=D_{ij}$ of the double locus of $Y$. Here, we adopt the notation of Section~\ref{section type i flops}. The preimage of $C$ under the normalization map $\nu: Y^{\nu} \to Y$ contains exactly two one-dimensional irreducible components and we assume that these are $(-1)$-curves in the respective normalizations of the components.  The case $i=j$ is allowed, in which case $Y$ is a class $T$ surface and the preimage of $C=D_{ii}$ consists of two smooth disjoint $(-1)$-curves $D^1_{ii} \cup D^2_{ii}$. Otherwise, $Y$ is necessarily a class $P$ surface.  

\begin{construction}\label{construction type ii flop pt}
We shall first treat the case $i\neq j$, which means that $Y$ is of class~$P$. 
In this case, the curve $C$ intersects the third component $Y_k$ in two points $P^1_k$ and $P^2_k$. In this case we proceed as follows. We contract the $(-1)$ 
curves $D_{ij} \subset Y^{\nu}_i=Y_i$  and $D_{ji} \subset Y^{\nu}_j=Y_j$ to obtain smooth surfaces $Y'^{\nu}_i$ and $Y'^{\nu}_j$. The anticanonical cycles of these surfaces are now nodal rational curves.   
The component  $Y_k=Y^{\nu}_k$ is also smooth and we blow it up in $P^1_k$ and $P^2_k$ to obtain a smooth surface $Y'^{\nu}_k$ with exceptional curves $E_1$ and $E_2$. The anticanonical cycle
of this surface now consists of the cycle of four smooth components $D'^{\nu}_{ki} + E_1 + D'^{\nu}_{kj}  + E_2$ where $D'^{\nu}_{ki}$ and $D'^{\nu}_{kj}$ are the strict transforms of $D_{ki}$ and $D_{kj}$.
The curves $D'^{\nu}_{ki}$ and $D'^{\nu}_{kj}$  have three special points, namely the singular points of the anticanonical cycle and the preimages of the interior special points. 
These are matched naturally to the three special points on 
$D'^{\nu}_{ik}$ and $D'^{\nu}_{jk}$. We now glue the two disjoint $(-1)$-curves $E_1$ and $E_2$ using the two singular points of the singular locus of the anticanonical cycle and the special interior 
points on $E_1$ and $E_2$ given by Definition~\ref{def:specialpointsP}. 
In this way we obtain a singular surface $Y'_k$ and the images $D'_{ki}$ and $D'_{kj}$ of $D'^{\nu}_{ki}$ and $D'^{\nu}_{kj}$ become nodal curves. 
We now glue $Y'_i$ and  $Y'_k$ along the nodal curves $D'_{ik}$ and $D'_{ki}$, thereby using the three special points on the normalizations of these curves  (which become a node and an interior special point).
 Similarly, we glue  $Y'_j$ and  $Y'_k$. The result is a combinatorial K3 surface $Y'$. Our matching of the special points ensures that the Carlson map is trivial and hence that $Y'$ is maximal. 
\end{construction}

We depict this construction and the next in Figure \ref{figure type ii flop}.

\begin{construction}\label{construction type ii flop tp}
We shall now consider the case $i=j$. Then we are in class $T$ with $Y_i$ being the special singular component. On the normalization $Y^{\nu}_i$ the preimage of $C=D_{ii}$ is a disjoint union  
$D^1_{ii} + D^2_{ii}$ of $(-1)$-curves. The anticanonical  cycle is $D^{\nu}_{ij} + D^1_{ii} + D^{\nu}_{ik} + D^2_{ii}$. We blow down $D^1_{ii}$ and $D^2_{ii}$ and obtain a smooth 
surface $Y'_2$ with anticanonal cycle $D'_{ij} + D'_{ik}$. Note that we have three special points on each of these curves. The curve $C$ intersects the two smooth 
components $Y_j$ and $Y_k$ in points $P_j$ and $P_k$ respectively. Blowing these up gives smooth surfaces $Y'_j$ and $Y'_k$ whose anticanocial cycles have two components
$D'_{ji} + E_i$ and $D'_{ki} + E_k$ where $E_j$ and $E_k$ are the newly introduced exceptional curves. In this we we obtain the union $Y'^{\nu} = \cup_{\ell} Y'_{\ell}$. We want to glue these to 
a singular surface $Y'$. For this we identify $D'_{ij}$ and $D'_{ji}$ as well as $D'_{ik}$ and $D'_{ki}$ using the  special points we have on these curves. Finally, we glue $E_j$ and $E_k$. For 
this we use the singular points of the anticanonical cycle and the new interior special points given by Definition~\ref{def:specialpointsT}. Again, the surface $Y'$ has trivial 
Carlson homomorphism $c_{Y'}=1$ and $Y'$ is thus maximal.      
\end{construction}

\begin{definition} \label{def:type II flops final}
In the situation of Construction~\ref{construction type ii flop pt} or Construction~\ref{construction type ii flop tp}, we say that the combinatorial K3 surface $Y'$ {\em arises from $Y$ by the type II flop in the curve $C$} and we refer to the birational map $\phi: Y \dashrightarrow Y'$ as a 
{\em type II flop} of combinatorial K3 surfaces.   
\end{definition}  

\begin{remark}
Arguing as in the case of type I flops, one sees again that the type II flops described here are the restrictions of type II flops 
$\tilde \phi: \sY \dashrightarrow \sY'$ of maximal Kulikov models.
\end{remark}
\begin{remark}
Type II flops change the class of the degree $2$ combinatorial K3 surface, i.e. $Y'$ is of class $T$ (or $P$) if and only if $Y$ is of class $P$ (or $T$).
\end{remark}

\begin{figure}[!h]
\centering
\begin{tikzpicture}[scale=1]
\tikzset{mygreen/.style={green!66!black}}

\draw[] 
(-5,1.3)--(-4,0.3)
(-2,0.3)--(-1,1.3)
(-5,-1.3)--(-4,-0.3)
(-2,-0.3)--(-1,-1.3)
(-5,1)--(-4,0)--(-5,-1)
(-1,1)--(-2,0)--(-1,-1);

\draw[red] (-4,0.3)--(-2,0.3)
(-4,-0.3)--(-2,-0.3);
\node[above] at (-3,0.3) {\textcolor{red}{\footnotesize $D_{ji}$}};
\node[below] at (-3,-0.3) {\textcolor{red}{\footnotesize $D_{ij}$}};

\fill[mygreen] (-4,0) circle (2pt);
\node[right, xshift=1mm] at (-4,0) {\textcolor{green!66!black}{\footnotesize $P^\nu_k$}};
\fill[mygreen] (-2,0) circle (2pt);
\node[left, xshift=-1mm] at (-2,0) {\textcolor{green!66!black}{\footnotesize $P^\nu_\ell$}};

\node(A1) at (-3,1.7) {$Y_j^\nu$};
\node(B1) at (-3,-1.8) {$Y_i^\nu$};
\node(C1) at (-5.5,0) {$Y^{\nu}_k$};
\node(D1) at (-0.5,0) {$Y^{\nu}_\ell$};

\begin{scope}[xshift=2.5cm]

\draw[] (4,2)--(3,1)--(2,2) 
(4.3,-2)--(3.3,-1)
(3.3,1)--(4.3,2)
(1.7,-2)--(2.7,-1)
(2.7,1)--(1.7,2)
(4,-2)--(3,-1)--(2,-2);

\draw[red] (3.3,-1)--(3.3,1)
(2.7,-1)--(2.7,1);

\node[left] at (2.7,0) {\textcolor{red}{\footnotesize $D'_{k\ell}$}};
\node[right] at (3.3,0) {\textcolor{red}{\footnotesize $D'_{\ell k}$}};

\fill[mygreen] (3,-1) circle (2pt);
\node[above, yshift=1mm] at (3,-1) {\textcolor{green!66!black}{\footnotesize $P^{\prime\nu}_i$}};
\fill[mygreen] (3,1) circle (2pt);
\node[below, yshift=-1mm] at (3,1) {\textcolor{green!66!black}{\footnotesize $P^{\prime\nu}_j$}};

\node(A2) at (1.5,0) {$Y_k^{\prime\nu}$};
\node(B2) at (4.5,0) {$Y_\ell^{\prime\nu}$};
\node(C2) at (3,1.75) {$Y^{\prime\nu}_j$};
\node(D2) at (3,-1.75) {$Y^{\prime\nu}_i$};
\end{scope}

\draw[->, dashed, thick] (0.5,0) -- (3,0)
node[midway, above, yshift=1mm] {$\phi^{\nu}$};

\end{tikzpicture}
\caption{A general type II flop of combinatorial K3 surfaces. If $i\neq j$ (and hence $k=\ell$), we are in the situation of Construction~\ref{construction type ii flop pt} and from left to right we go from a surface of class $P$ to one of class $T$. If $i=j$ (and hence $k\neq \ell$), we are in the situation of Construction~\ref{construction type ii flop pt} and go from class $T$ to class $P$.}
\label{figure type ii flop}
\end{figure}
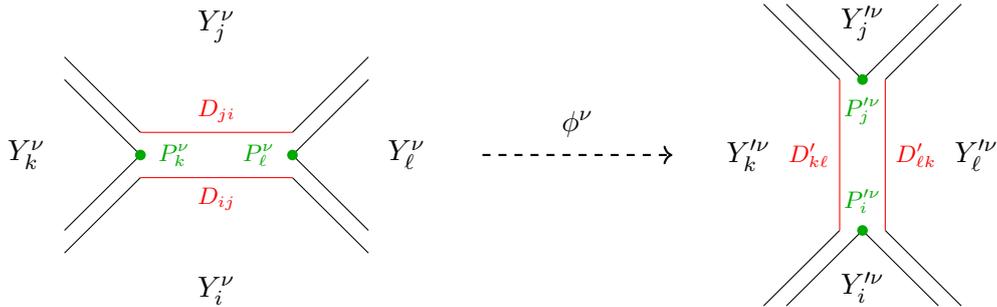

\begin{remark}\label{remark carlson map}
One can generalize these definitions to arbitrary degree, but this requires a further analysis of maximal gluings.
Abstractly, this is a consequence of the proof of \cite[Proposition~2.23]{HL22}. There it is shown that maximality is preserved under type I and II flops as the Carlson map pulls back to the Carlson map; see the arXiv preprint \cite[Proposition~5.18]{AE21}.
\end{remark}

\subsection{Type II flops on curve structures}\label{section type ii flops on curve structures}

The effect of a type II flop on the curve structure is a bit more sophisticated than for type I flops as the following example of such a flop of the surface $Y_P$ shows.

\begin{proposition}\label{proposition type ii flop yp}
A type II flop of $Y_P$ produces the surface $Y'\in \CK2$ obtained from $Y_T$ by successively flopping two curves from each nonspecial component into the special component. Note that there is a unique way of doing this.
\end{proposition}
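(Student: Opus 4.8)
The plan is to identify explicitly which curves on the central fiber of $Y_P$ get contracted and which get introduced by a type II flop, and then to recognize the resulting surface $Y'$ inside the classification of class $T$ surfaces. Recall from Construction~\ref{construction type ii flop pt} that a type II flop of $Y_P$ is performed along a smooth double curve $C = D_{ij}$ whose preimages $D_{ij}\subset Y_i^\nu$ and $D_{ji}\subset Y_j^\nu$ are $(-1)$-curves; both $Y_i=Y_j=\gothY_2$, so from Figure~\ref{figure gothy2 curve structure} these $(-1)$-curves are the black (anticanonical) vertices. After contracting $D_{ij}$ and $D_{ji}$ the augmented curve structures of $Y_i'$ and $Y_j'$ become those of $\gothY_1$ (with the endpoint of the horizontal chain now a singular anticanonical curve, i.e. a class $T$ nonspecial component of maximal Picard rank), while the third component $Y_k=\gothY_2$ is blown up in the two points $P^1_k, P^2_k$ where $C$ meets it; by Lemma~\ref{lemma zero curve on gothy2} these points lie on the two sections $D_{ki}, D_{kj}$ of the ruling, and the blow-up in them yields exactly $\gothY_4$, which becomes the special component after the gluing of $E_1$ and $E_2$. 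So the result $Y'$ is a class $T$ surface with nonspecial components of maximal Picard rank and special component $\gothY_4$, i.e. $Y'$ is obtained from $Y_T$ by blowing up curves into the special component.

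Next I would match this up with the description ``flop two curves from each nonspecial component into the special component.'' By Theorem~\ref{theorem type i flops to minus one model} every class $T$ surface arises from $Y_T$ by a sequence of type I flops, and a type I flop (Construction~\ref{construction type i flop}) blows down a $(-1)$-curve $C_i^\nu$ on one component and blows up the matching interior special point on the neighbouring component. Comparing the curve structures of $Y_1=Y_2=\gothY_1$ in $Y'$ (maximal Picard rank, Figure~\ref{figure gothy1 curve structure} or Figure~\ref{figure isotropic vector gothy1}) with the corresponding components of $Y_T$ — whose nonspecial components are already $\gothY_1$ — one sees that in fact the nonspecial components do not change, so the two curves must be flopped \emph{from} the nonspecial components \emph{into} the special one; a Picard-rank count then forces exactly two such flops per nonspecial component (this is consistent with $\gothY_4$ being obtained from two copies of $\gothY_1$-type behaviour plus the gluing, and with the total number of irreducible components of the central fibers being unchanged while the distribution of the exceptional curves shifts by two per nonspecial component). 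The uniqueness of the way of doing this should follow from Lemma~\ref{lemma curve structure is z basis} together with Theorem~\ref{theorem curve structures determine surface}: the curve structure of the class $T$ surface one obtains is pinned down, and any two sequences of type I flops realizing the same target produce the same surface by Theorem~\ref{theorem curve structures determine surface}, so the phrase ``unique way'' refers to the fact that up to the symmetries already quotiented out (the $\mu_2\times\mu_2$ action on each $\gothY_2$ in Example~\ref{example yp}, and the analogous symmetries of $Y_T$), the combinatorial recipe is forced.

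The main obstacle I anticipate is bookkeeping the gluing data, not the blow-up combinatorics: the substantive content of Construction~\ref{construction type ii flop pt} is that the interior special points provided by Definition~\ref{def:specialpointsP} on $E_1, E_2$ are matched correctly so that the Carlson map of $Y'$ is trivial and $Y'$ is genuinely a combinatorial K3 surface in the class $T$ asserted — and dually, that the sequence of type I flops of $Y_T$ one writes down respects \emph{its} gluing. Concretely, I would verify that, under the identifications of boundary components fixed in Example~\ref{example yp} and Example~\ref{example yt}, the special points on the new boundary curves $D'_{ki}, D'_{kj}$ and on $E_1, E_2$ produced by the type II flop coincide with the special points one reads off after performing the two type I flops into the special component, using the explicit description of interior special points on class $T$ nonspecial components (Definition~\ref{def:specialpointsT}) and the ruling description of Lemma~\ref{lemma zero curve on gothy1} and Corollary~\ref{corollary unique square zero curve class t}. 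Once the special points are seen to match on both sides, Theorem~\ref{theorem curve structures determine surface} closes the argument: the two constructions yield projective degree $2$ combinatorial K3 surfaces with the same augmented curve structure, hence are isomorphic.
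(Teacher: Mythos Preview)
There is a genuine error in your identification of the components of $Y'$. You claim that contracting the boundary $(-1)$-curve $D_{ij}$ on $Y_i=\gothY_2$ yields a surface with the augmented curve structure of $\gothY_1$, and that blowing up $Y_k=\gothY_2$ in the two points $P_k^1,P_k^2$ yields $\gothY_4$. A Picard-rank count already rules this out: $\rho(\gothY_2)=8$, so contracting one curve gives a rank-$7$ surface, not $\gothY_1$ (rank $9$); blowing up two points gives a rank-$10$ surface, not $\gothY_4$ (rank $6$). Your argument thus implicitly identifies $Y'$ with $Y_T$ itself, which is not what the proposition asserts, and the subsequent paragraph (``the nonspecial components do not change, so a Picard-rank count forces exactly two such flops\ldots'') is then internally inconsistent: if the components agreed with those of $Y_T$ there would be nothing to flop.

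The paper takes a different route that avoids trying to recognize the components directly. It computes the self-intersection numbers of the anticanonical vertices on $Y'$: on each nonspecial component the nodal anticanonical curve has square $3$ (via $\pi^*C'=C+2E$ with $C^2=-1$, $C.E=2$, $E^2=-1$), and on the special component the two new exceptional boundary curves have square $-1$ while the other two boundary curves have square $-3$ (using that glued boundary squares sum to zero when one side is nodal). These anticanonical squares are an invariant of the augmented curve structure, and by the enumeration in \cite[Section~7.2]{HL22} there is a \emph{unique} class~$T$ curve structure with squares $(3,3)$ on the nonspecial components and $(-1,-3,-1,-3)$ on the special one; it is precisely the one obtained from $Y_T$ by flopping two curves from each nonspecial component into the special component (this is the $(n_1,n_2)=(-3,-3)$ case there). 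Theorem~\ref{theorem curve structures determine surface} then finishes the argument. No separate verification of the gluing data is needed, because the curve-structure classification already encodes it.
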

\begin{proof}
Let us start with some notation. Let $\phi:Y_P \ratl Y'$ denote a type II flop as in the statement. We write $Y_P=Y_1\cup Y_2\cup Y_3$ and $Y'=Y_1' \cup Y_2' \cup Y_3'$ for the irreducible components where the numbering is chosen such that $\phi$ restricts to a birational map $Y_i \ratl Y_i'$. With the notation from Notation~\ref{notation double curves}, we assume that the type II flop is performed in the curve pair $D_{23}$, $D_{32}$. That is, these curves are blown down on $Y_2$ and $Y_3$, and then two exterior (i.e. non-interior) special points are blown up on $Y_1$ to give anticanonical curves $D'_1$, $D'_2$ on $Y_1'$. 

By \cite{HL22}\footnote{This is not explicit in op. cit., but follows easily from the description of elementary modifications in \cite[Section 1.1]{HL22}. This was noted in the proof of \cite[Corollary~6.8]{HL22}.}, the surface $Y'$ is of class $T$. Let us first compute the squares of the anticanonical vertices on $Y'$. On the nonspecial component $Y'_2$ of $Y'$, we denote by $C'$ the anticanonical nodal rational curve. Let $\pi: Y_2\to Y'_2$ be the blow-up given by the restriction of $\phi$. This is the blow-up in the node of the nodal curve $C'$ so that $\pi^*C' = C + 2E$ where $C=D_{21}$ denotes the strict transform of $C'$ and $E=D_{23}$ denotes the exceptional curve. We compute
\[
{C'}^2 = (\pi^*{C'})^2 = (C + 2E)^2=C^2 + 4 C.E + 4E^2 = -1 + 8 - 4=3.
\]
By symmetry, the same holds for the anticanonical curve on $Y'_3$. The squares of the curves in the anticanonical cycle on the special component are given as follows. The two blown up curves correspond to $(-1)$-curves and the other two anticanonical vertices are $(-3)$-curves by the fact that the squares of the glued curves have to add up to zero as soon as one of them is nodal. This last claim is obvious for $Y_T$ and is preserved under type I flops, so it holds for our surface $Y'$. For \emph{simple} normal crossing surfaces, this would also follow from the triple point formula, see \cite[p.~2]{FS86}.

From analyzing the curve structure on $\gothY_1$, see Figure~\ref{figure gothy1 curve structure}, we see that there is a unique way\footnote{This is the case $(n_1,n_2)=(-3,-3)$ in Section 7.2 of \cite{HL22}.} of producing a model with these anti-canonical squares. Since a curve structure uniquely determines the combinatorial K3 surface by Theorem~\ref{theorem curve structures determine surface}, we conclude the proof.
\end{proof}

\begin{figure}[!ht]%
\begin{tikzpicture}[scale=0.65, >=stealth]

  \foreach \i in {0,...,8} {
    \ifnum\i=0
      \node[special] (a\i) at (\i,0) {};
    \else
      \ifnum\i=8
        \node[special] (a\i) at (\i,0) {};
      \else
        \node[vertex] (a\i) at (\i,0) {};
      \fi
    \fi
  }
  \node[vertex] (b) at (4,1) {};
  \draw (a0) -- (a1) -- (a2) -- (a3) -- (a4) -- (a5) -- (a6) -- (a7) -- (a8);
  \draw (a4) -- (b);
  \draw[red, thick] (6.5,-0.5) rectangle (8.5,0.5);

  \draw[->, thick] (9,0) -- (12,0) node[midway, above] {type II flop};

  \foreach \i in {0,...,6} {
    \ifnum\i=0
      \node[special] (c\i) at (13+\i,0) {};
    \else
      \node[vertex] (c\i) at (13+\i,0) {};
    \fi
  }
  \node[vertex] (d) at (17,1) {};
  \draw (c0) -- (c1) -- (c2) -- (c3) -- (c4) -- (c5) -- (c6);
  \draw (c4) -- (d);

  \foreach \i in {0,...,8} {
    \ifnum\i=0
      \node[special] (A\i) at (\i,-3) {};
    \else
      \ifnum\i=8
        \node[special] (A\i) at (\i,-3) {};
      \else
        \node[vertex] (A\i) at (\i,-3) {};
      \fi
    \fi
  }
  \node[vertex] (B) at (4,-2) {};
  \draw (A0) -- (A1) -- (A2) -- (A3) -- (A4) -- (A5) -- (A6) -- (A7) -- (A8);
  \draw (A4) -- (B);

  \draw[->, thick] (9,-3) -- (12,-3) node[midway, above] {type II flop};

<\foreach \i in {0,...,8} {
  \ifnum\i=0
    \node[special] (C\i) at (13+\i,-3) {};
  \else
    \ifnum\i=8
      \node[special] (C\i) at (13+\i,-3) {};
    \else
      \node[vertex] (C\i) at (13+\i,-3) {};
    \fi
  \fi
}
  \node[vertex] (D) at (17,-2) {};
  \draw (C0) -- (C1) -- (C2) -- (C3) -- (C4) -- (C5) -- (C6) -- (C7) -- (C8);
  \draw (C4) -- (D);
  \node[bluewhite] (E) at ($(D)+(-1,0)$) {};
  \node[bluefill] (F) at ($(D)+(-2,0)$) {};
  \node[bluewhite] (G) at ($(D)+(1,0)$) {};
  \node[bluefill] (H) at ($(D)+(2,0)$) {};
  \draw (D) -- (E) -- (F);
  \draw (D) -- (G) -- (H);
\end{tikzpicture}

\caption{The behavior of the curve structures under a type II flop. }%
\label{figure type ii flop curve structure}%
\end{figure}
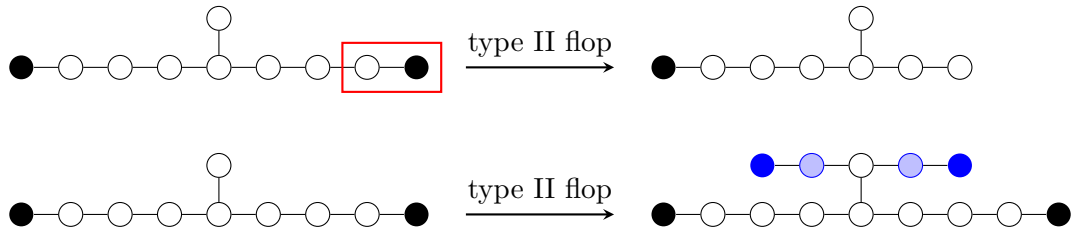

Figure~\ref{figure type ii flop curve structure} illustrates the type II flop $Y_P\ratl Y'$. On each of two components of $Y_P$, a double curve is blown down. These components become nonspecial components in the resulting class $T$ surface $Y'$. The curves adjacent to the exceptional curves in the curve structure become square zero curves on the corresponding components of $Y^{\prime\nu}$ and are no longer part of the curve structure. The third component of $Y_P$, call it $Y_1$,  becomes the special component $Y_1'$ of $Y'$. Here, the two singular points of the anticanonical cycle on $\gothY_2$ are blown up. In the curve structure of $Y'$, they become the blue anticanonical vertices in the bottom right graph of Figure~\ref{figure type ii flop curve structure}. Lemma~\ref{lemma zero curve on gothy2} describes a fibration for which there are uniquely determined smooth rational curves of square zero passing through each blow-up point. They become $(-1)$-curves under the blow-up and become elements in the curve structure of the special component of $Y'$. These are the blue shaded curves in the figure.

\section{Bases and cones}\label{section bases and cones}

The purpose of this section is to provide a description of the cone of curves (and hence also the nef cone) of a combinatorial K3 surface explicit enough to be computable in practice. We will show in Proposition~\ref{proposition cone generators} that the cone of curves of such a surface $Y$ is generated by the curves in the augmented curve structure. The nef cone is obtained as the dual of this cone in $\Pic(Y^\nu)$ intersected with $\Pic(Y)$. 

\subsection{Type I flops on curve structures}\label{section type i flops on curve structures}

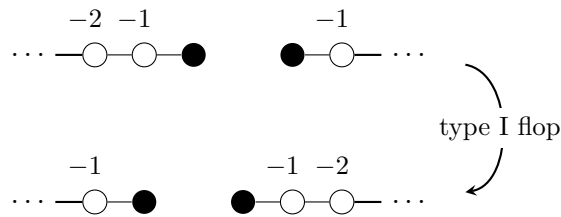
\begin{figure}[!ht]%
\begin{tikzpicture}[scale=0.65, >=stealth]

	
  \foreach \i in {0,...,2} {
    \ifnum\i=2
      \node[special] (a\i) at (\i,0) {};
    \else
      \node[vertex] (a\i) at (\i,0) {};
    \fi
  }
  \draw (a0) -- (a1) -- (a2);
  \draw[thick] ($(0,0)+(-0.8,0)$) -- (a0);
  \node[left] at ($(0,0)+(-0.8,0)$) {$\ldots$};

  \foreach \i in {0,1} {
    \ifnum\i=0
      \node[special] (b\i) at (4+\i,0) {};
    \else
      \node[vertex] (b\i) at (4+\i,0) {};
    \fi
  }
  \draw (b0) -- (b1);
  \draw[thick] (b1) -- ($(b1)+(0.8,0)$);
  \node[right] at ($(b1)+(0.8,0)$) {$\ldots$};

  \node[above, yshift=2mm, xshift=-1.3mm, font=\small] at (a0) {$-2$};
  \node[above, yshift=2mm, xshift=-1.3mm, font=\small] at (a1) {$-1$};
  \node[above, yshift=2mm, xshift=-1.3mm, font=\small] at (b1) {$-1$};

  \begin{scope}[yshift=-3cm]
  \foreach \i in {0,...,1} {
    \ifnum\i=1
      \node[special] (c\i) at (\i,0) {};
    \else
      \node[vertex] (c\i) at (\i,0) {};
    \fi
  }
  \draw (c0) -- (c1);
  \draw[thick] ($(0,0)+(-0.8,0)$) -- (c0);
  \node[left] at ($(0,0)+(-0.8,0)$) {$\ldots$};

  \foreach \i in {0,...,2} {
    \ifnum\i=0
      \node[special] (d\i) at (3+\i,0) {};
    \else
      \node[vertex] (d\i) at (3+\i,0) {};
    \fi
  }
  \draw (d0) -- (d1) -- (d2);
  \draw[thick] (d2) -- ($(d2)+(0.8,0)$);
  \node[right] at ($(d2)+(0.8,0)$) {$\ldots$};

  \node[above, yshift=2mm, xshift=-1.3mm, font=\small] at (c0) {$-1$};
  \node[above, yshift=2mm, xshift=-1.3mm, font=\small] at (d1) {$-1$};
  \node[above, yshift=2mm, xshift=-1.3mm, font=\small] at (d2) {$-2$};
  \end{scope}

\draw[->, thick, bend left=80] ($(b1)+(2.5,-0.2)$) to ($(d2)+(2.5,0.2)$) 
node[font=\small, fill=white, inner sep=3pt] at ($($(b1)+(3.2,0)$)!0.5!($(d2)+(3.2,0)$)$) {type I flop};
\end{tikzpicture}

\caption{The behavior of the curve structures under a type I flop.}%
\label{figure type i flop curve structure}%
\end{figure}

The effect of a type I flop on the curve structure is easy to describe. We flop in a $(-1)$-curve $b$ meeting an anticanonical vertex $d$ (necessarily with multiplicity $1$). This anticanonical vertex is glued to an anticanonical vertex $d'$, possibly on a different component.
During the flop, the vertex $b$ disappears from the diagram and reappears as a vertex $b'$ of square $(-1)$ next to the anticanonical vertex $d'$, see Figure~\ref{figure type i flop curve structure}. For all vertices meeting $b$ before the flop, both their self-intersection and their intersection multiplicity with $d$ increase by one. Conversely, all vertices meeting $d'$ before the flop will meet $b'$ after the flop, their self-intersection and their intersection multiplicity with $d'$ will decrease by one. 

Note that while in Figure~\ref{figure type i flop curve structure} only one white vertex meets $b$, $d$, and $d'$ respectively, the description we just gave is valid in full generality. This is a direct consequence of the definition of a type I flop in Definition~\ref{def:type II flops final} and of a curve structure in Definition~\ref{definition curve structure}.

\subsection{Curves on combinatorial K3 surfaces}

Recall that the vertices in the curve structure are generators of the rational Picard lattice by \cite[Proposition~4.17]{HL22}. One can do a little better in degree $2$.

\begin{lemma}\label{lemma curve structure is z basis}
Let $Y\in \CK2$. Then the curves in the curve structure form a $\Z$-basis of $\Pic(Y^\nu)$.
\end{lemma}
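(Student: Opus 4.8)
The plan is to reduce the statement to the two $(-1)$-form surfaces $Y_P$ and $Y_T$ via Theorem~\ref{theorem type i flops to minus one model}, and then check the claim directly on the surfaces $\gothY_1$, $\gothY_2$, and $\gothY_4$ whose curve structures are displayed in Figures~\ref{figure gothy1 curve structure}, \ref{figure gothy2 curve structure}, and \ref{figure gothy4 curve structure}. For the base case, I would argue componentwise: since $\Pic(Y^\nu)=\bigoplus_i\Pic(Y_i^\nu)$ and the curve structure of $Y$ is the disjoint union of those of the components, it suffices to show that the curves in the curve structure of each $\gothY_j$ ($j=1,2,4$) form a $\Z$-basis of $\Pic(\gothY_j)$. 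Each $\gothY_j$ is a weak del Pezzo obtained from $\P^2$ (resp.\ $\F_2$, resp.\ $\P^1\times\P^1$) by a sequence of blow-ups, so $\rho(\gothY_j)=9$ (resp.\ $9$, resp.\ $10$), which matches the number of white vertices in the corresponding figure; one then verifies that the listed negative curves generate $\Pic$ over $\Z$. The cleanest way is to exhibit the blow-down sequence explicitly: e.g.\ for $\gothY_1$, contracting the horizontal chain $C_8,\dots,C_1$ as in Example~\ref{example geometric description gothy1} realizes $\Pic(\gothY_1)$ with the standard basis $h,e_1,\dots,e_8$, and the curve-structure curves are exactly the exceptional curves of the successive blow-ups together with the flex-line class, whose classes form a unimodular change of basis from $h,e_1,\dots,e_8$; similarly for $\gothY_2$ and $\gothY_4$ using the contractions described in Examples~\ref{example geometric description gothy2} and~\ref{example geometric description gothy4}.

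The inductive step is to show that the $\Z$-basis property is preserved under a type~I flop. Fix a component $Y_1$ of $Y$ on which the flop acts, and recall from Section~\ref{section type i flops on curve structures} the explicit effect on the curve structure: the flopped $(-1)$-vertex $b$ disappears and reappears as a $(-1)$-vertex $b'$, while the remaining vertices keep their underlying curves (their self-intersection labels and incidences change, but the curves on the normalization before and after are related by the blow-down/blow-up $\pi\colon Y_1^\nu\ratl Y_1'^\nu$, which is an isomorphism away from the flopped curve). On $\Pic$, the flop replaces $\Pic(Y_1^\nu)$ by $\Pic(Y_1'^\nu)$ via the standard relation for a blow-up/blow-down: writing $C_b$ for the flopped $(-1)$-curve and $E'$ for the new $(-1)$-curve, one has $\pi_*$ and $\pi'^*$ fitting into orthogonal direct sum decompositions $\Pic(Y_1^\nu)=\pi^*\Pic(\bar Y)\oplus\Z C_b$ and $\Pic(Y_1'^\nu)=\pi'^*\Pic(\bar Y)\oplus\Z E'$, where $\bar Y$ is the common blow-down. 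Thus it suffices to check that the non-flopped curve-structure classes, together with $C_b$, form a $\Z$-basis of $\Pic(Y_1^\nu)$ before the flop if and only if the same curves (now read as classes on $Y_1'^\nu$) together with $E'$ form one after. Since the change-of-basis matrix expressing the new curve classes in terms of the old ones is triangular with $\pm1$ on the diagonal (the only change is that curves meeting $b$ get a copy of $C_b$ added/subtracted, as spelled out in Section~\ref{section type i flops on curve structures}), it is unimodular, and the claim follows.

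The main obstacle I anticipate is purely bookkeeping: making precise the dictionary between "the curve on $Y_1^\nu$ represented by a given vertex" before and after the flop, and checking that the induced linear map on $\Pic$ really is unimodular in \emph{all} configurations — in particular when several white vertices meet the flopped vertex, or when the anticanonical vertex $d'$ to which $b$ reattaches lies on a different component (so that the map is block-diagonal across two summands $\Pic(Y_i^\nu)\oplus\Pic(Y_j^\nu)$ and must be checked to be unimodular on the whole block). A clean way to sidestep the case analysis is to observe that the total number of curve-structure vertices is constant under a type~I flop and equals $\rho(Y^\nu)=\sum_i\rho(Y_i^\nu)$ (this is already known from \cite[Proposition~4.17]{HL22}, which gives that the vertices span $\Pic(Y^\nu)_\Q$), so the vertices always form a $\Q$-basis; hence one only needs that the transition matrix at each flop has determinant $\pm1$, which reduces to the local blow-up computation above and is independent of the global configuration. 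Combining the verified base case with this inductive step and Theorem~\ref{theorem type i flops to minus one model} completes the proof.
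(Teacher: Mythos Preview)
Your approach is essentially identical to the paper's: reduce to $Y_P$ and $Y_T$ via Theorem~\ref{theorem type i flops to minus one model}, verify the claim on $\gothY_1,\gothY_2,\gothY_4$ using their explicit blow-up descriptions, and check stability under type~I flops componentwise (the paper phrases the inductive step more directly as ``pushforward of a $\Z$-basis along a blow-down is a $\Z$-basis, and strict transforms plus the exceptional curve give a $\Z$-basis after a blow-up,'' which sidesteps your unimodular-matrix bookkeeping and the confusing ``common blow-down $\bar Y$'' picture---the latter does not literally exist when the blow-down and blow-up happen on two distinct components). Two small slips to fix: the Picard ranks are $\rho(\gothY_2)=8$ and $\rho(\gothY_4)=6$, not $9$ and~$10$.
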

For the proof and for the remainder of this section, recall the notion of an $n$-blow-up from Definition~\ref{def n blow up}.

\begin{proof}
First, we will show that the claim is stable under type I flops. Let $\phi:Y' \ratl Y$ be a type I flop. On the irreducible components, this is either a blow-up, a blow-down, or an isomorphism in which case there is nothing to show. We will show that the claim for $Y$ follows componentwise from the claim for $Y'$. 
Along the blown-down component, say $\phi:Y_1' \to Y_1$, this is obvious since the image of a $\Z$-basis for $\Pic(Y'_1)$ will constitute a $\Z$-basis for $\Pic(Y_1)$. On the blown up component, say $\phi^{-1}:Y_2 \to Y_2'$, by the description of Section~\ref{section type i flops on curve structures} the exceptional curve is added to the curve structure $\Gamma_{Y_2}$ and the other vertices of $\Gamma_{Y_2}$ correspond to the strict transforms of the vertices in $\Gamma_{Y'_2}$. Thus, together these vertices still form a $\Z$-basis for $\Pic(Y_2)$.

Thanks to Theorem~\ref{theorem type i flops to minus one model}, we are left to prove the claim for $Y_P$ and $Y_T$ respectively their irreducible components. We thus have to show for each $i =1,2,4$ that 
\[
\Pic(\gothY_i) = \Z\left\langle C_v \mid v \in \Gamma_{\gothY_i}\right\rangle.
\]
Let us start with $\gothY_1$. By Example~\ref{example geometric description gothy1}, it is the $8$-blow-up of $\P^2$ in an inflection point of a nodal cubic. Thus, the Picard group of $\gothY_1$ is generated by the exceptional curves and the strict transform of a line  in $\P^2$. These are precisely the curves in the curve structure.

Recall from Example~\ref{example geometric description gothy2} that $\gothY_2$ is obtained as a $(3,3)$-blow-up of the Hirzebruch surface $\F_2$. Thus, the Picard group of $\gothY_2$ is generated by the exceptional curves and the strict transform of a fiber and the $(-2)$-curve on $\F_2$. Again, these are precisely the curves in the curve structure.

The surface $\gothY_4$ is a $(1,1,1,1)$-blow-up of $\P^1\times \P^1$ with its toric boundary, see Figure~\ref{figure gothy4 curve structure}. The Picard group is generated by the four exceptional curves and the strict transforms of the fibers of the two projections on $\P^1\times\P^1$. The latter are the inner white vertices of the curve structure, so that indeed $\Pic(\gothY_4)$ is generated over $\Z$ by the vertices in the curve structure.
\end{proof}

\begin{remark}
Note that already by \cite[Proposition~4.17]{HL22}, a combinatorial K3 surface $Y\in \CK{2d}$ of degree $2d$ has Picard rank $\rho(Y)=20+d$. Further, $\Pic(Y)$ embeds into $\Pic(Y^\nu)$ by pullback and the image can be described by the sequence \eqref{eq maximality sequence}.
\end{remark}

\subsection{The cone of curves}\label{section cone of curves}

In order to compute the nef cone of a surface $Y \in \CK{2d}$, it is crucial to compute the cone of curves of $Y^\nu$. The latter is the direct sum of the cone of curves of its components. 

\begin{proposition}\label{proposition cone generators}
Let $Y\in\CK{}$ and let $Y^\nu$ be its normalization. Then the cone $\NEbar(Y^\nu)$ is generated by all curves in the augmented curve structure, i.e. the curve structure plus the anticanonical vertices. In particular, $\NEbar(Y^\nu)$ is rational polyhedral. 
\end{proposition}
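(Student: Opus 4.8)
The plan is to reduce the statement to the three building-block surfaces $\gothY_1$, $\gothY_2$, $\gothY_4$ and then check it there directly. The key observation is that the claim is stable under type~I flops: if $\phi : Y' \ratl Y$ is a type~I flop, the restriction to a component is a blow-up, a blow-down, or an isomorphism, and under each of these the effective cone behaves well. On the blown-down component the image of a generating set of curves is again a generating set (the blow-down is surjective on effective classes). On the blown-up component, by the description in Section~\ref{section type i flops on curve structures} the exceptional curve is adjoined to the curve structure while the remaining vertices become the strict transforms of the old ones; since every effective class on the blow-up is a combination of the exceptional divisor and strict transforms of effective classes downstairs, and the augmented curve structure downstairs generates by induction, the augmented curve structure upstairs generates as well. (One must be slightly careful that the anticanonical vertices are included throughout, so that strict transforms of the boundary components are accounted for; this is built into the definition of the augmented curve structure.) Combined with Theorem~\ref{theorem type i flops to minus one model}, which says every $Y \in \CK2$ is reached from $Y_P$ or $Y_T$ by a chain of type~I flops, this reduces everything to the components of $Y_P$ and $Y_T$, i.e. to $\gothY_1$, $\gothY_2$, $\gothY_4$.

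For each of $\gothY_i$, $i = 1,2,4$, I would use the explicit geometric description from Examples~\ref{example geometric description gothy1}, \ref{example geometric description gothy2}, \ref{example geometric description gothy4}. Each is an iterated blow-up of a minimal rational surface ($\P^2$, $\F_2$, or $\P^1\times\P^1$) along points on an anticanonical boundary. On such a rational surface, $\NEbar$ is rational polyhedral and generated by the negative curves together with the finitely many curves of non-negative self-intersection that arise as pullbacks of fiber classes / line classes from the minimal model; one checks in each case that these are precisely the curves appearing in the augmented curve structure. Concretely: on $\gothY_4$, the cone is generated by the four $(-1)$-exceptional curves, the four strict transforms of the toric boundary lines, and the two inner ruling classes — exactly the augmented curve structure in Figure~\ref{figure gothy4 curve structure}. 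On $\gothY_2$ and $\gothY_1$ one argues similarly, noting that after the blow-ups the curves of self-intersection $\geq 0$ that could contribute to extremal rays are the strict transforms of the ruling fiber (resp. line) and the images of the negative curves already listed; these are exactly the augmented-curve-structure vertices, and there are no further extremal rays because any irreducible curve not in the list has strictly positive intersection with a nef class that vanishes on all listed curves, hence lies in their span.

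The rational polyhedrality is then immediate, since the generating set is finite. The main obstacle I anticipate is the base-case verification that the listed curves really do generate, i.e. that no effective curve on $\gothY_1$ or $\gothY_2$ is an extremal ray outside the augmented curve structure: one needs to know all negative curves on these weak del Pezzo / anticanonical-pair surfaces, which follows from the classification of such pairs in \cite{Laz08} and the fact (used elsewhere in the paper) that $C(\gothY_i)$ is exactly the set of square-negative non-boundary curves, together with the standard fact that on a rational surface $\NEbar$ is generated by negative curves and any one ample (or big and nef) class. A secondary subtlety is bookkeeping the anticanonical vertices correctly under the flops, since type~I flops change which boundary curve is a $(-1)$-curve versus a $(-3)$-curve, but this does not affect the span argument as the boundary classes are always included in the augmented curve structure.
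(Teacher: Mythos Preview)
Your approach differs substantially from the paper's. The paper does not induct via type~I flops at all. Instead it works component by component and, for a component $Y_1$ with $\rho(Y_1)\geq 3$, invokes \cite[Theorem~2.1]{AL11}: on a smooth projective surface with $\kappa(-K)\geq 0$, the closed cone of curves is generated by the classes of curves of negative square. Since $Y_1$ carries an effective anticanonical cycle this hypothesis is satisfied, and it then remains only to check that every negative curve on $Y_1$ already appears in the augmented curve structure. That is Corollary~\ref{corollary negative curves in curve structure}, proved by passing to a common $n$-blow-up of $Y_1$ and the relevant $\gothY_m$ (Lemma~\ref{lemma hulek liese 4.9}) and citing \cite[Proposition~4.5]{HL22}. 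The cases $\rho(Y_1)\leq 2$ are handled by pushforward from a higher-rank birational model. Note also that the paper's argument covers all of $\CKKK$, whereas your reduction via Theorem~\ref{theorem type i flops to minus one model} is specific to degree~$2$.

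Your inductive strategy has a genuine gap in the blow-up direction. If $\pi:\tilde Y_1\to Y_1$ is the blow-up at the interior special point $p$ and $C\subset\tilde Y_1$ is irreducible with $C\neq E$, then $[C]=\pi^*[\pi_*C]-mE$ with $m=\operatorname{mult}_p(\pi_*C)$. Writing $[\pi_*C]=\sum a_i[D_i]$ with $a_i\geq 0$ by induction gives
\[
[C]=\sum a_i[\tilde D_i]+\Bigl(\sum a_i m_i-m\Bigr)E,\qquad m_i=\operatorname{mult}_p(D_i).
\]
You need $\sum a_i m_i\geq m$, but the inductive hypothesis only controls the numerical class of $\pi_*C$, not multiplicities of a particular representative at $p$. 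Already for $\pi:\F_1\to\P^2$ this fails if the chosen generator of $\NEbar(\P^2)$ is a line missing $p$: its strict transform together with $E$ spans only a proper subcone of $\NEbar(\F_1)$. In your setting Lemma~\ref{lemma interior special points} tells you which generators pass through $p$, but you would still have to show that \emph{some} nonnegative representation of $[\pi_*C]$ satisfies $\sum a_i m_i\geq m$, and nothing in your plan supplies this. That is exactly the work done by \cite{AL11} in the paper's argument.

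Two smaller points. The ``standard fact'' you invoke for the base cases---that on a rational surface $\NEbar$ is generated by the negative curves together with one ample class---is false as stated (it fails on $\P^1\times\P^1$, and on blow-ups of $\P^2$ in $\geq 9$ general points); for the weak del Pezzo surfaces $\gothY_i$ the correct statement is that $\NEbar$ is generated by the negative curves alone, but this is again essentially the Artebani--Laface input you are trying to avoid. And your final sentence is backwards: if a nef class $L$ vanishes on all listed curves and is strictly positive on $C$, then $C$ lies \emph{outside} $L^\perp$, hence outside their span, not inside it.
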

\begin{proof}
The statement is verified component by component, so we denote $Y_1 \subset Y^\nu$ a connected component. Let us first assume that $Y_1$ has Picard rank $\rho(Y_1)\geq 3$. In this case, by \cite[Theorem~2.1]{AL11}, we know that the cone $\NEbar(Y_1)$ is generated by the closure of the classes of curves of negative square. Note that the assumption $\kappa(-K_{Y_1})\geq 0$ is satisfied, as $Y_1$ comes with an anticanonical cycle. It thus suffices to show that all curves of negative square show up in the augmented curve structure. This is true by Corollary~\ref{corollary negative curves in curve structure} below.

We turn to the case where $Y_1$ has Picard rank at most $2$. Such a surface is obtained as a birational contraction of a surface of the same class having Picard rank at least~$3$. Let $\pi:Y_1' \to Y_1$ be such a contraction. Hence, the claim holds for $Y_1'$ by what was proven so far. By definition of the augmented curve structure (see Definition~\ref{definition curve structure}), we obtain the one of $Y_1$ from the one of $Y_1'$ by pushforward along $\pi$. Hence, given a curve $C \subset Y_1$, we can write the class of $\pi^{-1}(C) \subset Y_1'$ as a linear combination with nonnegative coefficients of the curves in the curve structure $\Gamma_{Y'_1}^a$. Then the class of $C$ is equal to the pushforward of this linear combination, which, by what we have just said and the definition of a curve structure, is a linear combination 
with nonnegative coefficients of the curves in the curve structure~$\Gamma_{Y_1}^a$. The claim follows.
\end{proof}

\begin{lemma}\label{lemma hulek liese 4.9}
Let $Y_1$ be a component of the normalization of a surface in $\CK{}$ and denote by $D_1, \ldots, D_m$ the components of its anticanonical cycle. Put \mbox{$n_i:=\max(0,D_i^2+1)$} and $n:=(n_1,\ldots,n_m)$.  Further, put $\gothn_i:=\max(0,-1-D_i^2)$ and $\gothn:=(\gothn_1,\ldots,\gothn_m)$. Then there are morphisms
\begin{equation}\label{eq n-blowup}
\xymatrix{
&\wt Y_1 \ar[dl]_p\ar[dr]^q&\\
Y_1 \ar@{-->}[rr]&& \gothY_m\\
}
\end{equation}
where $p$ is the $n$-blow-up of $Y_1$ and $q$ is the $\gothn$-blow-up of $\gothY_i$.
\end{lemma}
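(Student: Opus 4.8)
The plan is to induct on the number of type~I flops needed to pass from the reference surface $\gothY_m$ to $Y_1$, keeping track of everything at the level of the single component $Y_1$, and reading off $n$ and $\gothn$ from how the boundary self-intersections change under each flop.

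\emph{Reduction and the basic mechanism.} By Theorem~\ref{theorem type i flops to minus one model} — applied to a combinatorial K3 surface having $Y_1$ as a component and then restricted to $Y_1$ — the surface $Y_1$ is obtained from $\gothY_m$ by a finite sequence of type~I flops. By Construction~\ref{construction type i flop}, on the component each such step is a blow-up or a blow-down centred at the interior special point of one boundary curve $D$, leaving the other boundary curves, their self-intersections and their interior special points unchanged. The point on which everything hinges, and which follows from the explicit description of interior special points (Lemma~\ref{lemma interior special points} together with the discussion in Section~\ref{section interior special points}), is that after blowing up the interior special point of $D$ the new interior special point of the strict transform $\wt D$ is exactly the point $\wt D\cap E$, where $E$ is the exceptional curve just created; hence a composition of blow-ups all centred at the successive interior special points of one boundary curve is precisely an $n$-blow-up in the sense of Definition~\ref{def n blow up}. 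In the base case $Y_1=\gothY_m$ each $D_i^2$ equals the self-intersection of the $i$-th component of $\gothD_m$, so $n=\gothn=0$ and one takes $\wt Y_1=Y_1$, $p=q=\id$ (the single exceptional case $m=1$, where $D_1$ is nodal of self-intersection $1$, being checked directly).

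\emph{Inductive step.} Suppose the claim holds for $Y_1$ with common model $\wt Y_1$, and let $\psi\colon Y_1\dashrightarrow Y_1^{\sharp}$ be one further type~I flop; replacing $\psi$ by its inverse if necessary (the two cases being symmetric) we may assume $\psi$ is realised by a blow-up $Y_1^{\sharp}\to Y_1$ of the interior special point $p_j\in D_j$, so $(D_j^{\sharp})^2=D_j^2-1$ and $(D_i^{\sharp})^2=D_i^2$ for $i\neq j$. If $D_j^2\ge 0$ then $n_j^{\sharp}=n_j-1\ge 0$ and $\gothn^{\sharp}=\gothn$: by the mechanism above, $\psi$ is exactly the first of the $n_j$ blow-ups of $D_j$ making up $p$, and the interior special point of $D_j^{\sharp}$ is the centre of the next one, so the $n^{\sharp}$-blow-up of $Y_1^{\sharp}$ is again $\wt Y_1$ and we conclude with the same common model. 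If $D_j^2\le -1$ then $n^{\sharp}=n$ and $\gothn_j^{\sharp}=\gothn_j+1$: let $\wt Y_1^{\sharp}\to\wt Y_1$ be the blow-up at the point of $\wt Y_1$ lying over $p_j$ on the strict transform of $D_j$; since this centre is disjoint from the centres of the other blow-ups comprising $p$, the composite $\wt Y_1^{\sharp}\to Y_1^{\sharp}$ is the $n$-blow-up, while $\wt Y_1^{\sharp}\to\gothY_m$ is the $\gothn$-blow-up with one extra step appended to the chain over $\gothD_j$, i.e. the $\gothn^{\sharp}$-blow-up. This completes the induction.

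\emph{Main obstacle.} The delicate point is the mechanism recorded above: one must be certain that successive type~I flops at a given boundary curve really do assemble into an $n$-blow-up in the precise, one-parameter sense of Definition~\ref{def n blow up} — that the freshly created interior special point always lands on the intersection of the strict transform of the boundary curve with the last exceptional curve, rather than at some unrelated smooth point of the curve. This is exactly what the explicit control over interior special points in Section~\ref{section interior special points} provides; once it is in place, the remaining self-intersection bookkeeping (which amounts to $n_i-\gothn_i=D_i^2+1$ with $n_i,\gothn_i$ never simultaneously positive, because $p$ and $q$ never blow up the same boundary point) is routine.
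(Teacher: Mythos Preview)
The paper does not give a self-contained argument here; it simply cites \cite[Lemma~4.9]{HL22}. Your induction on type~I flops, together with the observation that successive blow-ups at the interior special point of a fixed boundary component are precisely an $n$-blow-up in the sense of Definition~\ref{def n blow up}, is exactly the kind of argument one expects that citation to unpack. So there is no genuine difference of approach to report.

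Two points deserve attention, however. First, every result you invoke in the reduction (Theorem~\ref{theorem type i flops to minus one model}, Lemma~\ref{lemma interior special points}, and the discussion of Section~\ref{section interior special points}) is stated in this paper only for degree~$2$, whereas the lemma is phrased for arbitrary $\CK{}$; your proof therefore establishes only the degree-$2$ case --- which, to be fair, is also what the cited reference covers. Second, your parenthetical ``checked directly'' for $m=1$ does not in fact go through with the formulas as stated. The curve $\gothD_1\subset\gothY_1$ is nodal with $\gothD_1^2=1$, so in the base case $Y_1=\gothY_1$ one gets $n_1=\max(0,D_1^2+1)=2$ and $\gothn_1=0$, forcing the $2$-blow-up of $\gothY_1$ to equal $\gothY_1$ itself, which is absurd. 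The displayed formulas are calibrated to the smooth-boundary case $\gothD_i^2=-1$; for the nodal case $m=1$ they should read $n_1=\max(0,D_1^2-1)$ and $\gothn_1=\max(0,1-D_1^2)$, and with that correction your induction runs without change. This is a slip in the lemma's statement rather than in your method, but ``checked directly'' hides it.
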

\begin{proof}
This is contained in the proof of \cite[Lemma~4.9]{HL22}.
\end{proof}

For later use, we isolate the following statement from the proof of Proposition~\ref{proposition cone generators}.

\begin{corollary}\label{corollary negative curves in curve structure}
Let $Y_1$ be a component of a surface in $\CK{}$. Then all square-negative curves on $Y_1^\nu$ appear in the curve structure. 
\end{corollary}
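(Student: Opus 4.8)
The plan is to reduce the statement about arbitrary components to the three basic surfaces $\gothY_1$, $\gothY_2$, $\gothY_4$ and then use the explicit blow-up descriptions. First I would invoke Theorem~\ref{theorem type i flops to minus one model}: any component $Y_1$ of a surface in $\CK2$ is obtained from one of $\gothY_1$, $\gothY_2$, $\gothY_4$ by a sequence of blow-ups and blow-downs in interior special points, coming from a composition of type I flops $Y_G \ratl Y$. So it suffices to check the claim for the three basic surfaces and then verify that the property ``every square-negative curve on the component appears in the curve structure'' is stable under the elementary modifications along which the curve structure is transported.

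For the base case, recall from Examples~\ref{example geometric description gothy1}, \ref{example geometric description gothy2}, \ref{example geometric description gothy4} that $\gothY_1$, $\gothY_2$, $\gothY_4$ are weak del Pezzo surfaces (more precisely $n$-blow-ups of $\P^2$, $\F_2$, $\P^1\times\P^1$ respectively). On such surfaces, and more generally via Lemma~\ref{lemma hulek liese 4.9} which exhibits $Y_1$ as related to $\gothY_m$ by a pair of $n$-blow-ups through a common $\wt Y_1$, one has complete control of the negative curves. For $\gothY_m$ itself the negative (non-anticanonical) curves are by definition exactly the set $C(\gothY_m)$ from Example~\ref{example gothy}, which constitutes the curve structure; the square-negative components of the anticanonical cycle are, in turn, exactly the anticanonical vertices of the augmented curve structure. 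Hence the statement holds for the basic surfaces.

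For the inductive step along a type I flop $\phi\colon Y' \ratl Y$: on a component where $\phi$ is an isomorphism there is nothing to prove; on a blown-down component $\phi\colon Y_1' \to Y_1$ one notes that the pushforward of a square-negative curve is either square-negative or has its class absorbed, and in either case the curve structure behaves correctly by the explicit description in Section~\ref{section type i flops on curve structures}; on a blown-up component $\phi^{-1}\colon Y_2 \to Y_2'$ the new exceptional $(-1)$-curve is added to the curve structure, and any square-negative curve on $Y_2$ is the strict transform of a square-negative curve on $Y_2'$ (which lives in $\Gamma_{Y_2'}$ by the inductive hypothesis), unless it is the new exceptional curve itself. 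The key point making this work uniformly is that type I flops happen in interior special points and an $n$-blow-up only ever introduces one new negative curve per step, so no negative curves are ever lost or born outside the curve structure. The main obstacle is bookkeeping: one must make sure that under a blow-down a curve that \emph{becomes} square-negative (for example when its self-intersection drops past $0$) was already recorded, which is precisely guaranteed by the combinatorial rules in Section~\ref{section type i flops on curve structures} together with the fact that the relevant set $C(Y_i)$ is independent of the chosen sequence of type I flops by \cite[Lemma~4.9]{HL22}.

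Alternatively — and this is essentially how the corollary is packaged in the paper — one simply observes that the corollary is exactly the content proved inside Proposition~\ref{proposition cone generators}: there, in treating the case $\rho(Y_1)\geq 3$, the appeal to \cite[Theorem~2.1]{AL11} already forces all square-negative curves to appear in the augmented curve structure, and the case $\rho(Y_1)\leq 2$ is handled by pushing forward from a higher-Picard-rank model where the statement is known. Isolating that argument gives the corollary directly. I would present the short reduction-to-basic-surfaces version as the primary proof and remark that it also falls out of Proposition~\ref{proposition cone generators}.
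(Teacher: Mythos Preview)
Your primary inductive approach has a real gap in the blow-up step. You claim that ``any square-negative curve on $Y_2$ is the strict transform of a square-negative curve on $Y_2'$,'' but this is exactly what needs proving and is not automatic: if an irreducible curve $C'\subset Y_2'$ with $C'^2\ge 0$ passes through the blown-up interior special point with multiplicity $m\ge 1$, its strict transform has self-intersection $C'^2-m^2$, which may well be negative. Nothing in your argument rules this out. Your appeal to the well-definedness of $C(Y_i)$ from \cite[Lemma~4.9]{HL22} does not help: that result shows $C(Y_i)$ is independent of the chosen sequence of type~I flops, not that $C(Y_i)$ exhausts the negative curves. Likewise, the assertion that ``an $n$-blow-up only ever introduces one new negative curve per step'' is precisely the missing lemma, not a fact you may take for granted.

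Your ``alternative'' is circular. The proof of Proposition~\ref{proposition cone generators} does not establish the corollary as a by-product; it \emph{cites} it (``This is true by Corollary~\ref{corollary negative curves in curve structure} below''). The appeal to \cite[Theorem~2.1]{AL11} there only says that $\NEbar(Y_1)$ is generated by the square-negative classes; the fact that these all lie in the augmented curve structure is exactly what the corollary is supplying.

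The paper's own proof sidesteps the inductive difficulty by passing to a roof: Lemma~\ref{lemma hulek liese 4.9} produces a blow-up $\wt Y_1\to Y_1$ with $\wt Y_1$ an $n$-blow-up of some $\gothY_m$. Since the statement descends along a blow-down (the strict transform on $\wt Y_1$ of a negative curve on $Y_1$ is again negative, and the curve structure pushes forward), it suffices to treat $n$-blow-ups of $\gothY_m$. That case is handled by invoking \cite[Proposition~4.5]{HL22}, which is the substantive external input your argument is missing. You even mention this roof in passing, but then do not use it; doing so, together with the cited proposition, is how the gap is actually closed.
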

\begin{proof}
Using Lemma~\ref{lemma hulek liese 4.9}, we reduce to the case where $Y_1$ is the $n$-blow-up of some~$\gothY_i$. Indeed, if the statement holds on a blow-up, it holds on the surface itself by pushforward. Let $C\subset Y_1$ be a curve of negative square and let us denote by $D$ the anticanonical cycle. Then either $C \subset D$ or, by our reduction step and \cite[Proposition~4.5]{HL22}, it is in the curve structure.
\end{proof}

\begin{remark}
Proposition~\ref{proposition cone generators} looks harmless, but it is actually quite surprising. Recall that the cone of curves of a rational surface need not be rational polyhedral. In fact, already for the blow-up of $\P^2$ in $\geq 9$ general points it is not since there are infinitely many $(-1)$-curves by a result of Nagata \cite[Theorem~4a]{Nag60}; see also \cite[Theorem~2.7]{CLO16} and references therein. The above proposition on the other hand applies to arbitrarily high Picard rank surfaces. The reason is that we are not blowing up in general points but instead in rather special points, the interior special points.
\end{remark}

\begin{remark}\label{remark computing nef cones}
Let $Y\in\CK{}$. By \eqref{eq maximality sequence}, we know how to describe the Picard group $\Pic(Y)$ inside $\Pic(Y^\nu)$. Proposition~\ref{proposition cone generators} allows us to compute the nef cone $\Nef(Y^\nu)$ as the dual of the cone of curves. 
One can algorithmically compute the ray generators of a rational polyhedral cone, given generators for its dual cone. This is well known and implemented, for example, in SageMath \cite{sagemath}. Finally, this allows to compute $\Nef(Y)$ as
\[
\Nef(Y) = \Nef(Y^\nu)\cap \Pic(Y) \subset \Pic(Y^\nu).
\]
\end{remark}

\begin{remark}
In order to compute the Mori fan in degree $2d$, one needs to compute nef cones of models $\sY\to S$ of the DNV family in degree $2d$. If $Y$ is the central fiber of $\sY\to S$, we explained in Remark~\ref{remark computing nef cones} how to compute its nef cone. Moreover, by the maximality property the restriction $\Pic(\sY)\to \Pic(Y)$ is an isomorphism, see \cite[Definitions 2.15 and~2.20]{HL22}. It is not difficult to see that this isomorphism preserves nefness so that this gives a way of computing the Mori fan explicitly.\end{remark}

\section{Flops on Picard}\label{section flops on picard}

This section is the core of the paper. The maps described here are completely explicit and thus allow one to compute the Mori fan on a computer. We know by Lemma~\ref{lemma curve structure is z basis} that the vertices in the curve structure provide a basis of the Picard group of the normalization of a combinatorial K3 surface. For each flop $\phi:Y \ratl Y'$ of type I or II, we define a map $\phi_c:\Pic(Y^\nu)\to \Pic(Y^{\prime\nu})$ between the corresponding Picard groups of the normalizations, see Definitions~\ref{definition type i flop on picard of normalizations} and~\ref{definition type ii flop on picard of normalizations}. It is referred to as the \emph{combinatorial pushforward}. The main results of this section are Theorems~\ref{theorem type i flop on picard} and~\ref{theorem type ii flop on picard}, which link the combinatorial pushforward to the pushforward along a flop of the corresponding models of the DNV family, see also Section~\ref{section flops motivation}. We prove the main theorem separately for type~I flops and type II flops. 

\subsection{Motivation}\label{section flops motivation}

Let us give some motivation for what comes. Here, we assume familiarity with models of the DNV family in the sense of \cite[Definition~2.22]{HL22}. 

Let $\Phi:\sY \ratl \sY'$ be a (type I or II) flop between models of the DNV family with central fiber $\phi:Y \ratl Y'$. Since the restrictions $\Pic(\sY) \to \Pic(Y)$, $\Pic(\sY') \to \Pic(Y')$ are isomorphisms by \cite[Definitions~2.15 and~2.20]{HL22}, the question arises how to describe $\Phi_*:\Pic(\sY) \to \Pic(\sY')$ under these restriction isomorphisms, i.e. in terms of the combinatorial K3 surfaces $Y$ and $Y'$. It clearly does \emph{not} coincide with $\phi_*$ since the latter is not even an isomorphism.

Consider the diagram
\begin{equation}\label{eq lift to pic normalization}
\xymatrix{
\Pic(\sY) \ar[d]_{\Phi_*} \ar[r]^{\iota^*} & \Pic(Y)\ar@{-->}[d]^{\textcolor{red}{\textbf{?}}} \ar@{^(->}[r]^{\nu^*}& \Pic(Y^\nu)\ar[d]^{\phi_c}\\
\Pic(\sY') \ar[r]^{\iota^{\prime*}} & \Pic(Y') \ar@{^(->}[r]^{\nu^*} & \Pic(Y^{\prime\nu}) \\
}
\end{equation}
where $\iota$, $\iota'$ denote the inclusions of the central fibers. 
The statement of Theorem~\ref{theorem type i flop on picard} resp. Theorem~\ref{theorem type ii flop on picard} is that the outer diagram is commutative in the degree $2d=2$ case. In particular, the combinatorial pushforward $\phi_c$ restricts to $\iota^{\prime*} \circ \Phi_* \circ \left(\iota^*\right)^{-1}$ on $\Pic(Y)$. This is remarkable for at least two reasons. 
\begin{enumerate}
	\item Firstly, this gives a fairly neat description of the geometric operation $\iota^{\prime*} \circ \Phi_* \circ \left(\iota^*\right)^{-1}$ by means of combinatorial data.
	\item Secondly, it is surprising that $\Phi_*$ has an extension to the Picard groups of the normalizations. There seems to be no apparent geometric reason for this (we emphasize that $\phi_c\neq \phi_*$).
\end{enumerate}

\subsection{Type I Flops on Picard}\label{section flops on picard i}

Recall from Lemma~\ref{lemma curve structure is z basis} that for $Y\in\CK{}$, the Picard group $\Pic(Y^\nu)$ has a canonical $\Z$-basis given by the vertices of the associated curve structure. We refer to this basis as the \emph{standard basis} of $Y$.

\begin{setup}\label{setup type i flop}
Let $\phi:Y \ratl Y'$ be a type I flop between combinatorial K3 surfaces. We denote by $\sB_\std, \sB_\std'$ the standard bases of $Y, Y'$. Then there are vertices $v_\flop \in \sB_\std$, $v_\flop'\in \sB_\std'$ such that (the curve corresponding to) $v_\flop$ is contracted under $\phi$ and (the curve corresponding to) $v_\flop'$ is contracted under $\phi^{-1}$. We will denote $(\cdot)':\sB_\std \to \sB_\std'$ be the bijection sending $v_\flop$ to $v_\flop'$ and all other vertices to their strict transforms. We denote the intersection pairings on $Y, Y'$ by $(\cdot,\cdot), (\cdot,\cdot)'$. 
\end{setup}

\begin{definition}
In the Setup~\ref{setup type i flop}, we call $v \in \sB_\std$ an \emph{old neighbor} if $v\neq v_\flop$ and $(v,v_\flop) \neq 0$. Similarly, we call $v' \in \sB_\std'$ a  \emph{new neighbor} if $v'\neq v_\flop'$ and $(v',v'_\flop)' \neq 0$. By abuse of terminology, we will call a vertex $v'\in\sB_\std'$ an old neighbor respectively $v\in\sB_\std$ a new neighbor if they correspond to old respectively new neighbors under $\phi$. 
\end{definition}

\begin{example}\label{example neighbors}
Consider the type I flop from Figure~\ref{figure type i flop curve structure}. Call the white vertices from left to right $a$, $b$, $c$ in the top row and $a'$, $b'$, $c'$ in the bottom row. Then $a$ and $a'$ are ``old neighbors'' and $c$ and $c'$ are ``new neighbors''. In terms of the curves they represent, the 'neighbors' (old or new) correspond exactly to those curves that meet the curve corresponding to $v_\flop$ on the surface $Y$. 
\end{example}

Note that we have \emph{disjoint} unions
\begin{equation}\label{eq decomposition standard basis type i flop}
\sB_\std = \sB_c \cup \sB_\old \cup \sB_\new \cup \{v_\flop\} \quad \textrm{ and } \quad \sB_\std' = \sB_c' \cup \sB'_\old \cup \sB'_\new \cup \{v'_\flop\} 
\end{equation}
which are preserved by $\phi$. Here, the subscripts $\old$ and $\new$ denote old respectively new neighbors and $\sB_c, \sB_c'$ are defined to be the remaining vertices (i.e. neither neighbors nor the exceptional curve). Here, the subscript \emph{c} stands for \emph{complement}.

\begin{definition}\label{definition type i flop on picard of normalizations}
Let $\phi:Y \ratl Y'$ be a type I flop of combinatorial K3 surfaces. We define the \emph{combinatorial pushforward} morphism $\phi_c:\Pic(Y^\nu)\to \Pic(Y^{\prime\nu}) $ as follows:
\begin{equation}\label{eq definition isometry on pic type i}
\phi_c(v):= \begin{cases} -v_\flop' & \textrm{ if } v=v_\flop\\[2pt]
v'+v'_\flop & \textrm{ if } v\in \sB_\new \cup \sB_\old\\[2pt]
 v' & \textrm{ otherwise.}\end{cases}
\end{equation}
\end{definition}

\begin{proposition}
In the Setup~\ref{setup type i flop}, the combinatorial pushforward $\phi_c$ is an isometry.
\end{proposition}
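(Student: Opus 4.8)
The plan is to turn the assertion into a finite bilinear check on the standard bases, fed by the explicit description of a type~I flop on curve structures (Section~\ref{section type i flops on curve structures}) and the ordinary blow-up and blow-down formulas for intersection numbers. Since $\phi_c$ is $\Z$-linear and, by Lemma~\ref{lemma curve structure is z basis}, $\sB_\std$ and $\sB_\std'$ are $\Z$-bases of $\Pic(Y^\nu)$ and $\Pic(Y^{\prime\nu})$, it suffices to verify $(\phi_c(u),\phi_c(v))' = (u,v)$ for all $u,v \in \sB_\std$. Along the way one observes that $\phi_c$ sends $\sB_\std$ to a $\Z$-basis of $\Pic(Y^{\prime\nu})$ --- from $\phi_c(v_\flop) = -v_\flop'$ and $\phi_c(v) = v' + v_\flop'$ for a neighbor $v$ one recovers each $v'$ --- so $\phi_c$ will in fact be an isomorphism of lattices.

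The book-keeping is cleanest if one compares $\phi_c$ with the \emph{naive pushforward} $\psi\colon \Pic(Y^\nu) \to \Pic(Y^{\prime\nu})$ given on the standard basis by $\psi(v) = v'$ for $v \neq v_\flop$ and $\psi(v_\flop) = 0$, so that $\phi_c(v) = \psi(v) + \epsilon(v)\, v_\flop'$ with $\epsilon(v_\flop) = -1$, $\epsilon(v) = 1$ on $\sB_\old \cup \sB_\new$, and $\epsilon(v) = 0$ on $\sB_c$. Expanding bilinearly and using $(v_\flop', v_\flop')' = -1$ one gets
\[
(\phi_c(u),\phi_c(v))' \;=\; (\psi(u),\psi(v))' + \epsilon(v)\,(\psi(u),v_\flop')' + \epsilon(u)\,(v_\flop',\psi(v))' - \epsilon(u)\epsilon(v),
\]
so the task reduces to evaluating the two ``error terms'' $(\psi(u),\psi(v))' - (u,v)$ and $(\psi(u),v_\flop')'$ for $u,v$ ranging over $\sB_\std$.

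The geometric inputs are the following. Recall that $\phi$ blows down the $(-1)$-curve $v_\flop$ on one component and blows up a point, producing the $(-1)$-curve $v_\flop'$, on another component (the same one when $i=j$). By the curve-structure description of Section~\ref{section type i flops on curve structures}, together with the disjointness of the $\phi$-equivariant decomposition~\eqref{eq decomposition standard basis type i flop} --- which ultimately rests on the uniqueness of interior special points, Lemma~\ref{lemma interior special points} --- every old neighbor meets $v_\flop$ with multiplicity exactly $1$ and avoids the centre of the blow-up producing $v_\flop'$; every new neighbor avoids $v_\flop$ before the flop and passes through that centre with multiplicity $1$, so that its strict transform $\wt C$ satisfies $(\wt C, v_\flop')' = 1$; among the basis curves, $v_\flop'$ meets only the strict transforms of the new neighbors, each with multiplicity one; and curves on distinct components of $Y^\nu$ (or of $Y^{\prime\nu}$) are orthogonal. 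Feeding this into the blow-down identity $(p_*C_1)\cdot(p_*C_2) = C_1\cdot C_2 + (C_1\cdot v_\flop)(C_2\cdot v_\flop)$ and the blow-up identity $\wt C_1\cdot\wt C_2 = C_1\cdot C_2 - 1$ for two curves through the centre, one finds that $(\psi(u),v_\flop')'$ equals $1$ exactly when $u$ is a new neighbor and $0$ otherwise, while $(\psi(u),\psi(v))'$ deviates from $(u,v)$ only through a $+1$ from the blow-down (when $u,v$ are both old neighbors, or both equal $v_\flop$), a $-1$ from the blow-up (when $u,v$ are both new neighbors), and the annihilation $\psi(v_\flop)=0$.

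Substituting these values into the displayed identity and running through the combinations of types $\{v_\flop,\text{old neighbor},\text{new neighbor},\text{complement}\}$ for $u$ and $v$, the correction terms cancel the error term in every case: for two old neighbors the $+1$ of the blow-down is killed by $-\epsilon(u)\epsilon(v) = -1$; for two new neighbors the $-1$ of the blow-up is killed by $\epsilon(v)(\psi(u),v_\flop')' + \epsilon(u)(v_\flop',\psi(v))' - \epsilon(u)\epsilon(v) = 1+1-1$; and in the cases involving $v_\flop$ the term $-\epsilon(v_\flop)\epsilon(v)$ absorbs the discrepancy coming from $\psi(v_\flop) = 0$. The part that genuinely requires attention, rather than mechanical verification, is establishing the geometric inputs above in the class-$T$ case $i=j$, where the blow-down of $v_\flop$ and the blow-up producing $v_\flop'$ take place on the \emph{same} component: there one uses that an old neighbor does not pass through the new blow-up centre and that a new neighbor does not meet $v_\flop$, so that the two modifications commute and the blow-up and blow-down identities compose cleanly. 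This case distinction is the main chore of the argument; there is no real conceptual obstacle.
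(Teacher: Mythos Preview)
Your proof is correct and follows essentially the same approach as the paper: both reduce to a bilinear check on the standard basis and verify case by case that $(\phi_c(u),\phi_c(v))'=(u,v)$. Your version is considerably more detailed---you introduce the naive pushforward $\psi$ and the coefficient function $\epsilon$ as book-keeping devices, make explicit the blow-up/blow-down identities underlying the comparison of $(v',w')'$ with $(v,w)$, and single out the class-$T$ case $i=j$ where both modifications happen on the same component---whereas the paper computes only the $v_\flop$--neighbor pairing explicitly and dispatches the neighbor--neighbor case with ``one computes similarly.'' Your systematic expansion makes transparent exactly which geometric facts are being used (multiplicity-one intersections, disjointness of old and new neighbors via Lemma~\ref{lemma interior special points}), facts that the paper's proof relies on implicitly; conversely, the paper's bare-hands computation is shorter once one accepts these inputs.
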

\begin{proof}
It suffices to check that the pairings between vectors of the standard bases are preserved. This is obvious if one of the vectors is in $\sB_c$. Clearly, also the square of $v_\flop$ is preserved. To handle the other cases, we will assume that $v,w \in \sB_\std$ are neighbors and $v',w'\in \sB_\std'$ are their strict transforms. Then
\[
\begin{aligned}
\left(\phi_c(v_\flop),\phi_c(v)\right)' &= (-v_\flop',v'+v_\flop')' = 1 - (v_\flop',v')' = \begin{cases} 0 & \textrm{ if } v'\in \sB'_\new \\1 & \textrm{ if } v'\in \sB'_\old  \end{cases} \\
& = (v_\flop,v).
\end{aligned}
\]
One computes similarly that $(\phi_c(v),\phi_c(w))'=(v,w)$.
\end{proof}

For the comparison of the geometric pushforward $\Phi_*$ and the combinatorial pushforward $\phi_c$, we give a description of the threefold flop. We refer to \cite[p.~13]{FM83} for the proof resp. construction.

\begin{lemma}\label{lemma threefold flop}
Let $\Phi:\sY \ratl \sY'$ be a type I or type II flop that flops a curve $B \subset Y$ to a curve $B'\subset Y'$. Then there is a commutative diagram as follows.
\[
\xymatrix{
&\Bl_B\sY \ar[r]^\isom \ar[dl]_p&\Bl_{B'}\sY'\ar[dr]^q&\\
\sY \ar@{-->}[rrr]^\Phi&&& \sY'
}
\]
We will thus identify $\wt\sY:=\Bl_B\sY = \Bl_{B'}\sY'$. Then the exceptional loci of $p$ and $q$ coincide and are given by an irreducible divisor $E\isom \P^1\times \P^1$. Under this identification, $p$ and $q$ are identified with the projections to the factors. Moreover, $B'$ is the image under $q$ of a fiber of $p\vert_E$. Then the linear map $\Phi_*$ is given by $q_*p^*$.\qed
\end{lemma}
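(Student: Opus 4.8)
The statement records the local structure of a \emph{simple} (Atiyah) threefold flop, so the plan would be to reduce it to that, following \cite[p.~13]{FM83}. The point that must be checked in the present situation is that the flopped curve $B\subset\sY$ is a smooth rational curve with normal bundle $N_{B/\sY}\isom\sO_B(-1)\oplus\sO_B(-1)$. Since $\sY$ is a maximal Kulikov degeneration one has $\omega_{\sY}\isom\sO_{\sY}$, so by adjunction the normal bundle of any sheet of the central fibre restricts on its normalization to $\sO(-D)$, where $D$ is the anticanonical cycle (compare Remark~\ref{remark log calabi yau surfaces}). In the type~I case $B$ is the $(-1)$-curve $C_i$ of Construction~\ref{construction type i flop}, contained in a single sheet; the normal bundle sequence of $B$ inside that sheet inside $\sY$ gives $N_{B/\sY}\isom N_{B/Y_i^\nu}\oplus\bigl(N_{\mathrm{sheet}/\sY}\vert_B\bigr)$ with $N_{B/Y_i^\nu}\isom\sO_B(B^2)=\sO_B(-1)$ and $N_{\mathrm{sheet}/\sY}\vert_B\isom\sO_B(-D_i\cdot B)=\sO_B(-1)$, since $B$ meets the anticanonical cycle $D_i$ transversally in a single point. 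In the type~II case $B$ lies on both adjacent sheets, so $N_{B/\sY}\isom N_{B/Y_i^\nu}\oplus N_{B/Y_j^\nu}$ (when $i=j$ the two summands are $N_{D_{ii}^1/Y_i^\nu}$ and $N_{D_{ii}^2/Y_i^\nu}$), and both summands equal $\sO_B(-1)$ by hypothesis; equivalently this is forced by the triple point formula $D_{ij}^2+D_{ji}^2+2=0$ along $B$. Both extensions split because $\Ext^1_{\P^1}(\sO(-1),\sO(-1))=0$.

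Granting this, I would invoke the Atiyah flop package. Blowing up $B$ produces $\wt\sY:=\Bl_B\sY$ with exceptional divisor $E=\P(N_{B/\sY})\isom\P^1\times\P^1$, with $p\vert_E$ the projection to the first factor and $\sO_{\wt\sY}(E)\vert_E\isom\sO_{\P^1\times\P^1}(-1,-1)$. A fibre $F$ of the second projection then satisfies $N_{F/\wt\sY}\isom\sO_F\oplus\sO_F(-1)$, so $E$ can be blown down along the second ruling by the Fujiki--Nakano contractibility criterion (see also \cite[p.~13]{FM83}), giving a smooth threefold $\sY'$ and a morphism $q\colon\wt\sY\to\sY'$ that contracts $E$ to a smooth rational curve $B'$ with $N_{B'/\sY'}\isom\sO_{B'}(-1)^{\oplus 2}$, by the symmetry exchanging the two rulings. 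This realises $\Phi$; the maps $p\vert_E$ and $q\vert_E$ are the two projections of $E\isom\P^1\times\P^1$, and for any fibre $F$ of $p\vert_E$ one has $q(F)=B'$, because $F$ meets each fibre of $q\vert_E$ once and is therefore carried isomorphically onto $B'$. That this coincides with the standard flop is verified by matching the local picture with the two small resolutions of the conifold $\{xy=zw\}$, which is precisely the content of \cite[p.~13]{FM83}. Finally, $p$ and $q$ are blow-ups of smooth curves in smooth threefolds, hence isomorphisms in codimension~$1$; thus $\Phi=q\circ p^{-1}$ is an isomorphism in codimension~$1$, and the induced map $\Phi_*\colon\Pic(\sY)\to\Pic(\sY')$ is $D\mapsto q_*(p^*D)$, since the total transform $p^*D$ differs from the strict transform of $D$ only by a multiple of $E$ and $q_*E=0$.

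The step that requires care is the normal bundle computation: it must be carried out on the smooth total space $\sY$ rather than on the possibly non-normal central fibre $Y$, and in the type~II case one must distinguish a \emph{sheet} of the central fibre (a local branch, i.e.\ a piece of $Y_i^\nu$) from the component $Y_i$ itself. Once $B$ is known to be a $(-1,-1)$-curve, nothing distinguishes type~I from type~II and the conclusion is the classical Atiyah flop.
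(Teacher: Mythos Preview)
Your argument is correct and is precisely the content of the reference \cite[p.~13]{FM83} that the paper invokes in lieu of a proof: the paper does not give its own argument for this lemma but simply cites Friedman--Morrison, so you have supplied the details that citation stands for. The only point worth tightening is presentational: in the type~I case you state the direct sum decomposition of $N_{B/\sY}$ before identifying the two pieces, whereas logically the splitting via $\Ext^1_{\P^1}(\sO(-1),\sO(-1))=0$ only applies once both sub- and quotient bundle are known to be $\sO(-1)$; and in the type~II case the decomposition $N_{B/\sY}\cong N_{D_{ij}/Y_i^\nu}\oplus N_{D_{ji}/Y_j^\nu}$ is immediate from the transversality of the two sheets along $B$, so no $\Ext$ argument is needed there.
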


Before we come to the main result of this section, we will treat a special case, which is very instructive.

\begin{example}\label{example geometric description pushforward}
Let us consider a type I flop $\Phi:\sY_P \ratl \sY'$ where $\sY_P$ is the model of the DNV family having $Y_P$ as a central fiber. We denote $Y'$ the central fiber of $\sY'$. Then locally around the flopped curve, the behavior of the flop on the central fibers is described by Figure~\ref{figure type i flop curve structure}. If $a,b,c$ denote the white vertices in the top curve structure (corresponding to curves $A,B,C$ on $Y_P$) and $a',b',c'$ denote the white vertices in the bottom curve structure (corresponding to curves $A',B',C'$ on $Y'$), then $A$ and $B+C$ define classes in $\Pic(Y_P) \isom \Pic(\sY_P)$ and the combinatorial pushforward takes the following values:
\[
\phi_c(a) = a'+b' \quad \textrm{ and } \quad \phi_c(b+c)=c'.
\]
We thus have to find an effective divisor $D \subset \sY_P$ whose restriction $D_0$ to the central fiber is $A$ resp. $B+C$ and show that in this case $D':=\Phi_*D$ restricts to $A'+B'$ resp. $C'$ on the central fiber of $\sY'$. In both cases, $D$ is unique: the classes in $\Pic(\sY_P)$ extending $a$ resp. $b+c$ both have square $(-2)$ and thus are either effective or anti-effective. The latter alternative is excluded, since an effective class must specialize to an effective class. Let us denote by $D$ an effective divisor on $\sY_P$ representing these classes, one sees that it must be irreducible. Indeed, if there were several irreducible components, all would restrict to $(-2)$ curves on the K3 surface $(\sY_P)_\eta$ so they cannot restrict to $A$ or $B+C$ on the central fiber. 

We will now describe $D'$ using Lemma~\ref{lemma threefold flop} and the notation introduced there. For $D_0=A$, the universal property of the blow-up tells us that $p^*D = \wt D$ is just the strict transform and $\wt D \isom \Bl_{A \cap B} D$. Notice that $A \cap B$ is a reduced point since $D.B = A.B =1$ where the first intersection is taken in $\sY_P$ and the second one in the component of $Y_P$ containing $A$ and $B$. We see that $D'=\Phi_*D=q_*\wt D$ restricts to $A'+B'$ on the central fiber, as claimed.

For $D_0=B+C$, one argues similarly and finds that $D'$ restricts to $C'$. Note that in this case, $p^*D = \wt D + E$, but the exceptional divisor $E$ is contracted under $q_*$. 
\end{example}

\begin{example}\label{example intersection description pushforward}
While the geometric description of $\Phi_*$ is not too difficult in the special case of Example~\ref{example geometric description pushforward}, it can get rather involved in general, mainly due to the various possibilities for the curve structure in a neighborhood of the flopped curve. Instead, we will undertake a different approach based on intersection products and the projection formula. For this, let $\wt K \subset \wt\sY:=\Bl_B\sY_P$ be an integral curve and denote $K:=p_*\wt K$ and $K':=q_*\wt K$. By the projection formula, for every divisor $D$ on $\sY_P$ we have  
\[
D.K = p^*D.\wt K = \wt D.\wt K + aE.\wt K
\]
where $a$ is the multiplicity of $B$ in $D_0$. The idea now is that $\phi_c(D_0)$ and $D'_0$ coincide if and only if they define the same linear form on the space of $1$-cycles. For this, we choose $\wt K$ such that $K'$ runs through the curves in the curve structure. For all curves $K'$ different from $B'$ in the curve structure of $\sY'_0$, there is a unique choice for $\wt K$ and $K$. To obtain $K'=B'$, we choose $\wt K=:\wt B$ to be the diagonal in the exceptional divisor $E=\P^1\times \P^1$. Then $K=B$. 

Let us revisit the result from Example~\ref{example geometric description pushforward} in this setup. For this purpose, let $D$ be the unique prime divisor such that $D_0=A$. Then $p^*D=\wt D$, but $q^*D' = \wt D + E$. For all curves $K$ in the curve structure different from $A,B,C$, we have $\wt K.E=0$. Recall from \cite[p.~13]{FM83} that $E\vert_E = \sO_{\P^1\times\P^1}(-1,-1)$. We compute
\[
\begin{aligned}
D'.A' &= p^*D.\wt A + E.\wt A = D.A + 1 = -1 = \phi_c(a).a',\\
D'.B' &= p^*D.\wt B + E.\wt B = D.B - 2 = -1 = \phi_c(a).b',\\
D'.C' &= p^*D.\wt C + E.\wt C = D.C +1  = 1 = \phi_c(a).c',\\
\end{aligned}
\]
so that indeed $\phi_c$ and $\Phi_*$ coincide on $D$. Notice that by the projection formula, the intersection number of $D$ with a curve can always be computed on the normalization of a component of $Y$ to which the curve lifts. A similar computation settles the case $D_0=B+C$. 
\end{example}

Clearly, Examples~\ref{example geometric description pushforward} and~\ref{example intersection description pushforward} do not only prove the claim for a type I flop of $\sY_P$, but in fact for every type I flop from a model whose curve structure near to the flopped curve looks like the upper one in Figure~\ref{figure type i flop curve structure}. 
Now we turn to the general comparison between the combinatorial and the geometric pushforward as in \eqref{eq lift to pic normalization} for type I flops.

\begin{theorem}\label{theorem type i flop on picard}
Let $\Phi:\sY \ratl \sY'$ be a type I flop between models of the DNV family and let $\phi:Y \ratl Y'$ denote the induced type I flop between the central fibers. Then the diagram 
\[
\xymatrix{
\Pic(\sY) \ar@{^(->}[r]\ar[d]_{\Phi_*} & \Pic(Y^\nu) \ar[d]^{\phi_c}\\
\Pic(\sY') \ar@{^(->}[r] & \Pic(Y^{\prime\nu}) \\
}
\]
is commutative where the horizontal isomorphisms are given by restriction to the central fiber and pullback to the normalization and $\phi_c$ is the combinatorial pushforward.
\end{theorem}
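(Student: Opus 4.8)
The plan is to reduce the statement to an intersection-theoretic identity on $1$-cycles, exactly as in Example~\ref{example intersection description pushforward}, and then carry out the local computation case by case according to the possible shapes of the curve structure near the flopped curve. Concretely, fix $D \in \Pic(\sY)$ with restriction $D_0 \in \Pic(Y)$, and let $D' := \Phi_*D$ with restriction $D'_0 \in \Pic(Y')$. Since $\Pic(Y'^\nu)$ is torsion-free (indeed free with the standard basis, by Lemma~\ref{lemma curve structure is z basis}) and the intersection pairing on each component $Y'_k$ is nondegenerate, it suffices to show that $\nu^*D'_0$ and $\phi_c(\nu^*D_0)$ have the same intersection number with every curve $C_{v'}$, $v' \in \sB'_\std$, computed on the normalization of the appropriate component. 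This replaces the commutativity of the outer square in \eqref{eq lift to pic normalization} by the verification of finitely many numerical identities.

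The second step is to compute both sides of these identities. For the right-hand side, use the definition \eqref{eq definition isometry on pic type i}: writing $D_0$ in the standard basis of $Y$, applying $\phi_c$ amounts to the bookkeeping ``add a copy of $v'_\flop$ for each old or new neighbor appearing, and flip the sign of the $v_\flop$-coefficient'', and then pairing with $C_{v'}$ is read off from the curve structure of $Y'$ (recall that the pairing between standard basis vectors is encoded by the edges and self-intersection labels). For the left-hand side, use Lemma~\ref{lemma threefold flop}: let $\wt\sY = \Bl_B\sY = \Bl_{B'}\sY'$ with common exceptional divisor $E \cong \P^1\times\P^1$ and $E|_E = \sO(-1,-1)$, and for each $v' \ne v'_\flop$ pick the unique curve $\wt K$ on $\wt\sY$ with $q_*\wt K = C_{v'}$, while for $v' = v'_\flop$ take $\wt K$ to be the diagonal of $E$. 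Then by the projection formula
\[
D'.C_{v'} = q^*D'.\wt K = p^*D.\wt K = \wt D.\wt K + a\,E.\wt K,
\]
where $a$ is the multiplicity of $B$ in $D_0$ and $\wt D = p^*D$; note $D'.C_{v'}$ is a genuine intersection number that can be computed on the normalization of the component of $Y'$ carrying $C_{v'}$. The quantity $\wt D.\wt K$ equals $D.p_*\wt K$, again by the projection formula, so everything is expressed in terms of intersection numbers on the central fibers $Y$ and $Y'$, which are governed by the curve structures.

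The third step is to match the two sides. The key observation, already visible in Example~\ref{example neighbors}, is that the decomposition \eqref{eq decomposition standard basis type i flop} of the standard basis into $\sB_c \cup \sB_\old \cup \sB_\new \cup \{v_\flop\}$ is precisely the stratification of $\wt K$ according to whether $\wt K$ meets $E$ and, if so, along which ruling: curves in $\sB_c$ have $\wt K.E = 0$, new neighbors and the flopped vertex interact with $E$ through the $p$-fibers, old neighbors through the $q$-fibers. Feeding this into the projection-formula expression above and comparing with the value produced by $\phi_c$ reduces, for each class of $v'$, to an elementary identity among the numbers $D.B$, $D.C_v$ (for $v$ a neighbor), $C_v.v_\flop$, and the self-intersection of $v_\flop$ (which is $-1$), exactly as in the three-line computation of Example~\ref{example intersection description pushforward}. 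Since a type~I flop, by Construction~\ref{construction type i flop}, only affects a $(-1)$-curve meeting one anticanonical component transversally in a single non-triple point, and the behavior on curve structures is completely described in Section~\ref{section type i flops on curve structures}, there are only a handful of local configurations to check, and the argument of Examples~\ref{example geometric description pushforward} and~\ref{example intersection description pushforward} applies verbatim to each.

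The main obstacle is bookkeeping rather than conceptual: one must make sure the case analysis over the possible local curve structures near the flopped curve is genuinely exhaustive (a vertex of $\sB_\std$ that is both an old \emph{and} a new neighbor cannot occur, the flopped $(-1)$-curve meets the anticanonical cycle with multiplicity exactly $1$, etc.), and that the identification of $\wt\sY$ in Lemma~\ref{lemma threefold flop} is compatible with the chosen standard bases on both sides — i.e.\ that $q_*\wt K$ for the fibers of $p|_E$ really does land on $v'_\flop$ and its new neighbors with the correct multiplicities. Once these compatibilities are pinned down, each numerical identity is immediate, and the commutativity of the square follows.
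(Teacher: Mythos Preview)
Your overall strategy---verify $\phi_c(D_0)=D'_0$ by pairing against the standard basis, using the resolution from Lemma~\ref{lemma threefold flop}---is the right idea and is exactly what Example~\ref{example intersection description pushforward} does. But the displayed formula in your second step is wrong, and the error is not cosmetic. You write $D'.C_{v'}=q^*D'.\wt K=p^*D.\wt K$; the second equality fails because $q^*D'\neq p^*D$ in general. The classes $q^*D'$ and $p^*D$ agree off $E$, so their difference is a multiple of $E$, and intersecting with a fiber of $q\vert_E$ gives $q^*D'=p^*D+(D.B)\,E$. The missing term $(D.B)(E.\wt K)$ is precisely what carries the change in the coefficient at $v'_\flop$, so without it your ``elementary identities'' will not balance. (There is also a notational inconsistency: you set $\wt D=p^*D$ and then write $p^*D.\wt K=\wt D.\wt K+aE.\wt K$, which is self-contradictory; presumably $\wt D$ is meant to be the strict transform with $p^*D=\wt D+aE$, but even with that fix the $q^*D'=p^*D$ problem remains. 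Compare how Example~\ref{example intersection description pushforward} handles this: it writes $q^*D'=\wt D+E$, not $q^*D'=p^*D$.)

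The paper's proof also bypasses your proposed case analysis. Since $\Phi$ is an isomorphism at the generic point of every curve $C_v$ with $v\neq v_\flop$, the coefficients of $D_0$ and $D'_0$ in the standard bases agree at all such $v$; only the coefficient $\lambda'$ at $v'_\flop$ can differ from what $\phi_c$ predicts. The paper reduces to a sufficiently general ample $D$ (so that $p^*D$ already equals the strict transform) and computes that single coefficient by applying the projection formula to the two rulings of $E$, isolated as Lemma~\ref{lemma coefficients exceptional divisor}. This yields $\lambda+\lambda'=\sum a_v+\sum b_v$ directly, with no need to enumerate local configurations of the curve structure.
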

\begin{proof}
We use Lemma~\ref{lemma threefold flop} together with the notation introduced there. 
Since both maps $\Phi_*$ and $\phi_c$ are linear, it suffices to verify the claim on ample divisors since those generate the Picard group. Also, one can always replace such a divisor by a multiple. Let $D\subset \sY$ be a sufficiently ample divisor. In this case, $h^0(\sY_s,\sO_\sY(D)_s) = \chi(\sO_\sY(D)_s)$ by Serre vanishing and this is independent of the point $s\in S$. Thus, we can choose $D$ general in its linear system so that we may assume it does not contain the exceptional curve but only meets it transversally in a finite number of points. We denote $D':=\Phi_*D$ and write the restrictions $D_0$ and $D_0'$ to the central fibers $\sY_0=Y$ and $\sY'_0=Y'$ with respect to the standard bases as
\begin{equation}\label{eq pushforward in standard basis}
\begin{aligned}
D_0&\sim\sum_{v\in \sB_\old} a_v A_v + \sum_{v\in \sB_\new} b_v B_v + \sum_{v\in \sB_c} c_v C_v + \lambda B \quad \textrm{ resp. } \\
D'_0&\sim\sum_{v\in \sB'_\old} a_v A'_v + \sum_{v\in \sB'_\new} b_v B'_v + \sum_{v\in \sB'_c} c_v C'_v + \lambda' B',\\
\end{aligned}
\end{equation}
up to linear equivalence. Here, the decomposition is according to \eqref{eq decomposition standard basis type i flop}. Note that since $\Phi$ is an isomorphism at the generic point of the $A_v, B_v,C_v$, the coefficients at those curves do not change from $D_0$ to $D_0'$ (where we identified the bases $\sB_\std$, $\sB'_\std$ as in Setup~\ref{setup type i flop}). From \eqref{eq pushforward in standard basis}, we read off the combinatorial pushforward of $D$ as
\begin{equation}\label{eq combinatorial pushforward in standard basis}
\phi_c(D_0)= \sum_{v\in \sB'_\old} a_v A'_v + \sum_{v\in \sB'_\new} b_v B'_v + \sum_{v\in \sB'_c} c_v C'_v + \left(\sum_v a_v + \sum_v b_v - \lambda \right) \cdot B'.
\end{equation}

Comparing \eqref{eq pushforward in standard basis} and \eqref{eq combinatorial pushforward in standard basis}, we conclude the proof by Lemma~\ref{lemma coefficients exceptional divisor}.
\end{proof}

\begin{lemma}\label{lemma coefficients exceptional divisor}
In the notation of \eqref{eq pushforward in standard basis}, we have 
\[
\lambda + \lambda' = \sum_{v\in \sB_\old} a_v + \sum_{v\in \sB_\new} b_v.
\]
\end{lemma}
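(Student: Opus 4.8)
The plan is to route both $\lambda$ and $\lambda'$ through a single geometric number that the threefold flop controls. Write $B = v_\flop$ for the flopped curve, which lies on one component $Y_i^\nu$ of $Y^\nu$, and $B' = v'_\flop$ for its image, lying on one component $Y_j'^\nu$ of $Y'^\nu$ (with $i = j$ allowed, the self-intersection case of Construction~\ref{construction type i flop}). First I would establish three identities: that $D\cdot B$, computed on $Y_i^\nu$, equals $\sum_{v\in\sB_\old} a_v - \lambda$; that $D'\cdot B'$, computed on $Y_j'^\nu$, equals $\sum_{v\in\sB_\new} b_v - \lambda'$; and that the flop forces $D'\cdot B' = -\,D\cdot B$. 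Substituting the first and third into the second and rearranging is exactly the asserted equality.

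For the geometric identity I would argue exactly as in Example~\ref{example intersection description pushforward}, using Lemma~\ref{lemma threefold flop}. By the choices already made in the proof of Theorem~\ref{theorem type i flop on picard}, $D$ does not contain the fixed curve $B$, so $p^*D$ is the strict transform $\wt D$, with no exceptional component. Writing $q^*D' = \wt D + cE$, intersecting with a fibre of $q\vert_E$ (contracted by $q$) and using $E\vert_E = \sO_{\P^1\times\P^1}(-1,-1)$ gives $c = D\cdot B$; intersecting $q^*D'$ with the diagonal $\Delta\subset E\cong\P^1\times\P^1$, for which $p_*\Delta = B$ and $q_*\Delta = B'$, then gives $D'\cdot B' = -\,D\cdot B$.

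For the two combinatorial identities I would use the description of a type~I flop on curve structures from Section~\ref{section type i flops on curve structures} and the decomposition \eqref{eq decomposition standard basis type i flop}. The relevant facts are: $B^2 = -1$ and $(B')^2 = -1$; the curves of the curve structure meeting $B$ are exactly the old neighbours, each with $A_v\cdot B = 1$, and dually the curves meeting $B'$ are exactly the new neighbours, each with $B'_v\cdot B' = 1$; and by definition of $\sB_c$, $\sB_c'$ no curve of $\sB_c$ meets $B$ and no curve of $\sB_c'$ meets $B'$. Since $B\subset Y_i$, the number $D\cdot B$ is computed on $Y_i^\nu$ from the restriction of the first line of \eqref{eq pushforward in standard basis}; intersecting with $B$ leaves only the old-neighbour terms and the term $\lambda B$, so $D\cdot B = \sum_{v\in\sB_\old} a_v - \lambda$. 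The second identity is the same computation on $Y_j'^\nu$, with the second line of \eqref{eq pushforward in standard basis}, $B'$, and the new neighbours in place of the first line, $B$, and the old neighbours.

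The only step carrying genuine content is the sign-reversal $D'\cdot B' = -\,D\cdot B$, and since that computation is already in Example~\ref{example intersection description pushforward} I expect no real obstacle. The one place to be careful is the self-intersection case $i = j$ (a surface of class~$T$, with the blow-down and the blow-up taking place on the special component): there the new-neighbour curves also lie on $Y_i^\nu$, so their contribution to $D\cdot B$ cannot be dropped by a ``different component'' argument, and one must instead invoke the disjointness $\sB_\old\cap\sB_\new = \emptyset$ of \eqref{eq decomposition standard basis type i flop}, which forces a new neighbour not to meet $B$. One should also confirm the ``multiplicity one'' statements for the neighbours; these hold because, up to the flop, $v_\flop$ and $v'_\flop$ are exceptional curves of blow-ups at smooth points, so the smooth curves of the curve structure meet them transversally in at most one point.
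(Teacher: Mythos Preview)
Your proof is correct and follows essentially the same route as the paper: both reduce to computing $D\cdot B$ and $D'\cdot B'$ from \eqref{eq pushforward in standard basis} via the intersection behaviour of old/new neighbours, and then relating these two numbers through the exceptional divisor $E\cong\P^1\times\P^1$ and the projection formula. The only cosmetic difference is the choice of test curves on $E$: the paper intersects with a fibre of $p\vert_E$ and a fibre of $q\vert_E$ to extract the coefficient $\mu'$ of $E$ in $q^*D'$ directly, whereas you (following Example~\ref{example intersection description pushforward}) use one fibre of $q\vert_E$ together with the diagonal to obtain the sign reversal $D'\cdot B'=-D\cdot B$ in one step; either pair of curves suffices and the computations are interchangeable.
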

\begin{proof}
Using \eqref{eq pushforward in standard basis}, we write the pullbacks of $D,D'$ to $\wt\sY$ as
\begin{equation}\label{eq pullbacks}
p^*D = \wt D  \qquad \textrm{ and }\qquad q^*D' = \wt D + \mu' E,
\end{equation}
where $\wt D$ is the strict transform and $E \subset \wt\sY$ is the exceptional divisor (for both $p$ and~$q$). Note that since $D$ is a general ample divisor, it does not contain $B$, so the pullback coincides with the strict transform. Let us consider a fiber $F'$ of $p\vert_E: E \to B$. Then $p_*F'=0$ and $q_*F'=B'$. We use the projection formula to compute
\[
\begin{aligned}
0 &= D.p_*F' = p^*D.F' = \wt D.F'\\
&= q^*D'.F' -\mu' E.F' \\
&= D'.B' + \mu' \\
&= -\lambda' + \sum_v b_v + \mu',\\
\end{aligned}
\]
where in the last step we used \eqref{eq pushforward in standard basis}. Let us denote by $F$ now a fiber of $q\vert_E:E \to B'$. Then we find analogously
\[
\begin{aligned}
0 &= D'.q_*F = q^*D'.F = \wt D.F + \mu' E.F\\
&= p^*D.F + \mu' E.F' \\
&= D.B - \mu' \\
&= -\lambda + \sum_v a_v - \mu',\\
\end{aligned}
\]
Putting together the two equations, we obtain
\[
\lambda' - \sum_v b_v = \mu' = -\lambda + \sum_v a_v
\] 
\end{proof}

\begin{corollary}
In the Setup~\ref{setup type i flop}, the map $\phi_c$ sends $\Pic(Y)$ into $\Pic(Y')$.\qed
\end{corollary}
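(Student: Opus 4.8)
The plan is to deduce the corollary directly from Theorem~\ref{theorem type i flop on picard} by a diagram chase, after realizing the combinatorial flop geometrically. First I would use the Remark following Definition~\ref{def:type I flops final}: the type I flop $\phi:Y\ratl Y'$ is the restriction to central fibers of a type I flop $\Phi:\sY\ratl\sY'$ between models of the DNV family, obtained by flopping the unique maximal smoothing $\sY$ of $Y$ (cf.\ \cite[Propositions~2.13 and~2.16]{HL22}). By maximality, the restriction homomorphisms $\iota^*:\Pic(\sY)\to\Pic(Y)$ and $\iota^{\prime*}:\Pic(\sY')\to\Pic(Y')$ are isomorphisms, see \cite[Definitions~2.15 and~2.20]{HL22}. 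Composing them with the injections $\nu^*$ gives precisely the horizontal arrows of the square in Theorem~\ref{theorem type i flop on picard}, whose images are the sublattices $\nu^*\Pic(Y)\subseteq\Pic(Y^\nu)$ and $\nu^*\Pic(Y')\subseteq\Pic(Y^{\prime\nu})$ described via \eqref{eq maximality sequence}.

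Then I would invoke Theorem~\ref{theorem type i flop on picard}, which asserts that this square commutes. Since the threefold pushforward $\Phi_*$ is a homomorphism $\Pic(\sY)\to\Pic(\sY')$, chasing the square yields
\[
\phi_c\bigl(\nu^*\Pic(Y)\bigr)=\phi_c\bigl(\nu^*\iota^*\Pic(\sY)\bigr)=\nu^*\iota^{\prime*}\Phi_*\Pic(\sY)\subseteq\nu^*\iota^{\prime*}\Pic(\sY')=\nu^*\Pic(Y'),
\]
which is exactly the assertion once $\Pic(Y)$ and $\Pic(Y')$ are identified with their images in the Picard groups of the respective normalizations.

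I do not expect any real obstacle beyond Theorem~\ref{theorem type i flop on picard} itself: granting that the outer square of \eqref{eq lift to pic normalization} commutes, the corollary is a formal consequence of $\Phi_*$ being well defined on $\Pic(\sY)$. The one point that genuinely uses the (routine) lifting remark above is that every type I flop in the purely combinatorial Setup~\ref{setup type i flop} does come from a flop of models. Alternatively, one could argue without models by checking straight from the formula in Definition~\ref{definition type i flop on picard of normalizations} that $\phi_c$ preserves the matching condition along the double curves cut out by \eqref{eq maximality sequence}; this is more laborious and, given the theorem, unnecessary.
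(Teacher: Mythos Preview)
Your proposal is correct and matches the paper's intent: the corollary is stated with a bare \qed and no proof, so the paper regards it as an immediate consequence of Theorem~\ref{theorem type i flop on picard} via exactly the diagram chase you describe. Your remark that one must invoke the lifting of $\phi$ to a flop $\Phi$ of models (the Remark after Definition~\ref{def:type I flops final}) is the only nontrivial ingredient beyond the theorem, and you have identified it correctly.
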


\subsection{Type II Flops on Picard}\label{section flops on picard ii}

We will now define the combinatorial pushforward for a type II flop. We could define the sought-for morphism $\phi_c$ from \eqref{eq lift to pic normalization} on the standard basis $\sB_\std$ from Setup~\ref{setup type i flop} consisting of all curves in the curve structure, but it turns out that the description in another basis is easier. We will add the anticanonical vertices in exchange for the vertices adjacent to them. More precisely:

\begin{setup}\label{setup type ii flop}
Let $\phi:Y \ratl Y'$ be a type II flop between combinatorial K3 surfaces. We denote by $\sB_\std, \sB_\std'$ the standard bases of $Y, Y'$. Let $d_i$, $d_i'$ for $i=1,2$ be the anticanonical vertices on $Y$, $Y'$ which are blown down under $\phi$ resp. $\phi^{-1}$. Let $v_i$, $v_i'$ for $i=1,2$ be the unique vertices of the curve structure having nonzero intersection with $d_i$, $d_i'$. We define the bases
\[
\sB_{II}:= \left(\sB_\std \ohne \{v_1, v_2\}\right) \cup \{d_1,d_2\}
\]
of $\Pic(Y^\nu)$ respectively 
\[
\sB_{II}':= \left(\sB_\std' \ohne \{v_1', v_2'\}\right) \cup \{d_1',d_2'\}
\]
of $\Pic(Y^{\prime\nu})$. Taking the strict transform gives a canonical bijection 
\[
\sB_\std \ohne \{v_1, v_2\} \to \sB_\std \ohne \{v_1', v_2'\}.
\]
We extend this to a bijection $(\cdot)': \sB_{II} \to \sB_{II}'$ by sending $d_i \mapsto d_i'$.
\end{setup}

\begin{remark}
Note that there is a small ambiguity here: there is no canonical correspondence between the $d_i$ and the $d_i'$. Contrary to what the notation suggests, they are not canonically ordered, so that once we have chosen an ordering (which we will tacitly do), sending $d_1 \mapsto d_2'$ and $d_2\mapsto d_1'$ would be an equally reasonable choice. We do not pay attention to this, since this issue only plays a role on the normalization and we are ultimately interested in the glued surfaces. Suppose for the sake of the argument that $Y'$ is of class $T$ (and thus $Y$ is of class $P$). Then the curves $d_i'$ for $i=1,2$ are glued to one another under the normalization map. While the combinatorial pushforward map on $\Pic(Y^{\prime\nu})$ to be defined below will depend on the numbering of the $d_i'$, its restriction to $\Pic(Y')$ will not. This is reflected graphically in the fact that the curves $d_i'$ are exchanged under the graph automorphism of the augmented curve structure that fixes the upper row and the white $(-2)$-vertex in the lower row of Figure~\ref{figure gothy4 curve structure} and swaps the two leftmost vertices with the two rightmost vertices in the lower row.
\end{remark}

\begin{definition}\label{definition type ii flop on picard of normalizations}
Let $\phi:Y \ratl Y'$ be a type II flop of combinatorial K3 surfaces. We define the \emph{combinatorial pushforward} morphism $\phi_c:\Pic(Y^\nu)\to \Pic(Y^{\prime\nu}) $ as follows:
\begin{equation}\label{eq definition isometry on pic type ii}
\phi_c(v):= \begin{cases} -v' & \textrm{ if } v\in \{d_1,d_2\}\\ v' & \textrm{ otherwise.} \\\end{cases}
\end{equation}
\end{definition}

\begin{theorem}\label{theorem type ii flop on picard}
Let $\Phi:\sY \ratl \sY'$ be a type II flop between models of the DNV family and let $\phi:Y \ratl Y'$ denote the induced type II flop between the central fibers. Then the diagram 
\[
\xymatrix{
\Pic(\sY) \ar@{^(->}[r]\ar[d]_{\Phi_*} & \Pic(Y^\nu) \ar[d]^{\phi_c}\\
\Pic(\sY') \ar@{^(->}[r] & \Pic(Y^{\prime\nu}) \\
}
\]
is commutative where the horizontal isomorphisms are given by restriction to the central fiber and pullback to the normalization and $\phi_c$ is the combinatorial pushforward.
\end{theorem}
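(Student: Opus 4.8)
The plan is to mirror the proof of Theorem~\ref{theorem type i flop on picard} as closely as possible, substituting the type~II threefold flop picture for the type~I one. By linearity of both $\Phi_*$ and $\phi_c$, it suffices to check commutativity on a sufficiently ample divisor $D\subset\sY$, which we may take general in its linear system so that it meets the flopping curve $B\subset Y$ transversally in finitely many points and contains neither $B$ nor the exceptional locus. Set $D':=\Phi_*D$. Using Lemma~\ref{lemma threefold flop}, form $\wt\sY=\Bl_B\sY=\Bl_{B'}\sY'$ with exceptional divisor $E\cong\P^1\times\P^1$ and projections $p,q$; then $D'=q_*p^*D$ and, since $D$ is general, $p^*D=\wt D$ is the strict transform while $q^*D'=\wt D+\mu'E$ for some $\mu'\in\Z$. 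As before, testing $p^*D$ and $q^*D'$ against a fiber $F'$ of $p|_E$ (with $p_*F'=0$, $q_*F'=B'$) and against a fiber $F$ of $q|_E$ (with $q_*F=0$, $p_*F=B$), together with $E|_E=\sO_{\P^1\times\P^1}(-1,-1)$, will pin down the coefficient $\mu'$ and relate the two restrictions $D_0$, $D_0'$.

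The one genuinely new ingredient, and the step I expect to be the main obstacle, is bookkeeping the \emph{change of basis}: the type~II flop does not merely relabel vertices (as in type~I), it introduces two new boundary components and destroys two old ones, so $\phi_c$ is described in the bases $\sB_{II}$, $\sB_{II}'$ of Setup~\ref{setup type ii flop} rather than the standard bases. Concretely, on the component(s) where boundary curves $D_{ij}$, $D_{ji}$ are blown down, their neighbours $v_i$ become square-zero and leave the curve structure, while on the component where two points get blown up, the new exceptional $(-1)$-curves $E_1$, $E_2$ enter. I would write $D_0$ in the basis $\sB_{II}$, i.e.\ using the anticanonical classes $d_1,d_2$ in place of $v_1,v_2$; the key relations are the blow-up formulas $\pi^*(\text{nodal anticanonical}) = (\text{strict transform}) + 2E$ on the contracted side (cf.\ the computation ${C'}^2=(C+2E)^2=3$ in the proof of Proposition~\ref{proposition type ii flop yp}) and the ordinary $\pi^*D_{ki}=D'^\nu_{ki}$ on the blown-up side. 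Feeding these into the projection-formula computation on $\wt\sY$, the sign flip $d_i\mapsto -d_i'$ in~\eqref{eq definition isometry on pic type ii} should emerge exactly the way $v_\flop\mapsto -v_\flop'$ did in type~I, because the relevant fibre of $E$ meets the strict transform of the anticanonical curve with the appropriate multiplicity.

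The remaining steps are then routine. First, record that the horizontal maps are isomorphisms onto their images by Lemma~\ref{lemma curve structure is z basis} and the maximality sequence~\eqref{eq maximality sequence}, and that $\Phi_*$ is well defined by Lemma~\ref{lemma threefold flop}. Second, observe that $\Phi$ is an isomorphism at the generic point of every curve in $\sB_{II}$ other than those adjacent to the flop, so the coefficients of $D_0$ at those curves are unchanged in $D_0'$; this handles the ``$v'$'' case of~\eqref{eq definition isometry on pic type ii} immediately. Third, carry out the projection-formula computation above to verify the ``$d_i\mapsto -d_i'$'' case, namely that $D'.d_i' = -\,(\text{coefficient machinery})$ matches $\phi_c(D_0).d_i'$ for $i=1,2$; since the classes $d_1',d_2'$ together with the strict transforms span, and intersection numbers with a spanning set determine a class, this forces $D_0' = \phi_c(D_0)$. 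One subtlety to flag explicitly: the two constructions of a type~II flop (Construction~\ref{construction type ii flop pt} for $P\to T$ and Construction~\ref{construction type ii flop tp} for $T\to P$) must both be treated, but they are mirror images of one another under $\Phi\leftrightarrow\Phi^{-1}$, so it suffices to do one case and invoke the symmetry of the statement (the diagram for $\phi$ commutes iff the one for $\phi^{-1}$ does, since $\phi_c$ is an isometry with inverse the combinatorial pushforward of $\phi^{-1}$, as is immediate from~\eqref{eq definition isometry on pic type ii}). Finally, as in the type~I case, one notes that $\phi_c$ therefore restricts to $\Pic(Y)\to\Pic(Y')$, completing the proof.
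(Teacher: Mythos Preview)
Your approach is correct and follows the same blueprint as the paper: resolve the flop via Lemma~\ref{lemma threefold flop}, test against the fibers $F,F'$ of $p|_E$ and $q|_E$ using the projection formula, and exploit the key feature of the basis $\sB_{II}$ that every vector other than $d_1,d_2$ is disjoint from the flopping curve $B$.

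The one noteworthy difference is the choice of test divisor. You propose to take a general ample $D$, as in the type~I proof, and then keep track of all coefficients in $\sB_{II}$. The paper instead works with the single divisor $D$ whose restriction is $D_0=B_1+B_2=d_1+d_2$; since every other basis vector of $\sB_{II}$ has zero intersection with both $B$ and $B'$, this $D$ (and its transform $D'$) pair trivially with all of $\sB_{II}\setminus\{d_1,d_2\}$ resp.\ $\sB'_{II}\setminus\{d'_1,d'_2\}$, so the entire content reduces to the single identity $D.B=-D'.B'$. This shortcut is why the paper's computation is shorter than the type~I analogue, and it makes the blow-up formulas you flag (the $\pi^*(\text{nodal anticanonical})=(\text{strict transform})+2E$ relation and the multiplicity of the fibre against the anticanonical strict transform) unnecessary: working in $\sB_{II}$ directly, those relations never enter. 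Your symmetry remark about $P\to T$ versus $T\to P$ is correct but likewise not needed, since the projection-formula computation is agnostic to the direction.
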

\begin{proof}
The argument is similar to the case of a type I flop, however, the curve structure is a bit simpler. This manifests itself in our choice of basis, cf. Setup~\ref{setup type ii flop}. Let $B\subset \sY$ be the curve to be flopped and denote $B_1, B_2$ the curves in the normalization $Y^\nu$ of $Y=\sY_0$ that map to $B$. Then $B_1, B_2$ are basis vectors in $\sB_{II}$ and all other basis vectors do not intersect $B$. On those curves, there is nothing to prove. 

Observe that $B_1+B_2 \in \Pic(Y)$ and this is a square $(-2)$ class. As in Example~\ref{example geometric description pushforward}, there is a unique divisor $D$ on $\sY$ that restricts to $D_0=B_1+B_2$ on the central fiber. We only have to show that $\phi_c(D_0)=(\Phi_*D)_0$. We use Lemma~\ref{lemma threefold flop} and the notation therein and write $D':=\Phi_*D$ and
\[
p^*D = \wt D + \lambda E \quad \textrm{ and } \quad q^*D' = \wt D + \lambda' E
\]
where $\wt D$ is the strict transform. As before, we consider a fiber $F'$ of $p\vert_E: E \to B$. Then the projection formula gives us
\[
\begin{aligned}
0 &= D.p_*F' = \wt D.F' + \lambda E.F'\\
&= q^*D'.F' + (\lambda- \lambda') E.F' \\
&= D'.B' + (\lambda'-\lambda)\\
\end{aligned}
\]
Similarly, if $F$ denotes a fiber of $q\vert_E:E \to B'$, we find 
\[
\begin{aligned}
0 &= D'.q_*F = \wt D.F + \lambda' E.F\\
&= p^*D.F + (\lambda'-\lambda) E.F \\
&= D.B - (\lambda'-\lambda).\\
\end{aligned}
\]
Putting together both equations, we obtain
\[
D.B = \lambda'-\lambda = - D'.B'.
\]
Again, since all other curves in $\sB_{II}$ intersect $D$ respectively $D'$ trivially, this shows that $D'_0=\phi_c(D_0)$ and we conclude the proof.
\end{proof}




\begin{thebibliography}{CLO16}

\bibitem[AE21]{AE21}
Valery {Alexeev} and Philip {Engel}.
\newblock Compact moduli of {K3} surfaces, 2021.
\newblock Preprint.
\newblock \href {https://arxiv.org/abs/2101.12186v2}
  {\path{arXiv:2101.12186v2}}.

\bibitem[AL11]{AL11}
Michela Artebani and Antonio Laface.
\newblock Cox rings of surfaces and the anticanonical {I}itaka dimension.
\newblock {\em Adv. Math.}, 226(6):5252--5267, 2011.
\newblock \href {https://doi.org/10.1016/j.aim.2011.01.007}
  {\path{doi:10.1016/j.aim.2011.01.007}}.

\bibitem[CLO16]{CLO16}
Izzet Coskun, John Lesieutre, and John~Christian Ottem.
\newblock Effective cones of cycles on blowups of projective space.
\newblock {\em Algebra Number Theory}, 10(9):1983--2014, 2016.
\newblock \href {https://doi.org/10.2140/ant.2016.10.1983}
  {\path{doi:10.2140/ant.2016.10.1983}}.

\bibitem[Fer03]{Fer03}
Daniel Ferrand.
\newblock Conducteur, descente et pincement.
\newblock {\em Bull. Soc. Math. France}, 131(4):553--585, 2003.
\newblock \href {https://doi.org/10.24033/bsmf.2455}
  {\path{doi:10.24033/bsmf.2455}}.

\bibitem[FM83]{FM83}
Robert Friedman and David~R. Morrison.
\newblock The birational geometry of degenerations: an overview.
\newblock In {\em The birational geometry of degenerations}, volume~29 of {\em
  Progr. Math.}, pages 1--32. Birkh\"auser, Boston, MA, 1983.

\bibitem[Fri83]{Frie83}
Robert Friedman.
\newblock Global smoothings of varieties with normal crossings.
\newblock {\em Ann. of Math. (2)}, 118(1):75--114, 1983.
\newblock \href {https://doi.org/10.2307/2006955} {\path{doi:10.2307/2006955}}.

\bibitem[FS86]{FS86}
Robert Friedman and Francesco Scattone.
\newblock Type {${\rm III}$} degenerations of {$K3$} surfaces.
\newblock {\em Invent. Math.}, 83(1):1--39, 1986.
\newblock \href {https://doi.org/10.1007/BF01388751}
  {\path{doi:10.1007/BF01388751}}.

\bibitem[HL22]{HL22}
Klaus Hulek and Carsten Liese.
\newblock The {M}ori fan of the {D}olgachev-{N}ikulin-{V}oisin family in
  genus~{$2$}.
\newblock {\em \'{E}pijournal G\'{e}om. Alg\'{e}brique}, 6:Art. 2, 68, 2022.
\newblock \href {https://doi.org/10.46298/epiga.2022.5971}
  {\path{doi:10.46298/epiga.2022.5971}}.

\bibitem[Laz08]{Laz08}
Radu Laza.
\newblock Triangulations of the sphere and degenerations of {K}3 surfaces,
  2008.
\newblock Preprint.
\newblock \href {https://arxiv.org/abs/0809.0937} {\path{arXiv:0809.0937}}.

\bibitem[Nag61]{Nag60}
Masayoshi Nagata.
\newblock On rational surfaces. {II}.
\newblock {\em Mem. Coll. Sci. Univ. Kyoto Ser. A. Math.}, 33:271--293,
  1960/61.
\newblock \href {https://doi.org/10.1215/kjm/1250775912}
  {\path{doi:10.1215/kjm/1250775912}}.

\bibitem[Oka07]{Oka07}
Mutsuo Oka.
\newblock Zariski pairs on sextics. {II}.
\newblock In {\em Singularity theory}, pages 837--863. World Sci. Publ.,
  Hackensack, NJ, 2007.
\newblock \href {https://doi.org/10.1142/9789812707499\_0036}
  {\path{doi:10.1142/9789812707499\_0036}}.

\bibitem[Rei94]{Rei94}
Miles Reid.
\newblock Nonnormal del {P}ezzo surfaces.
\newblock {\em Publ. Res. Inst. Math. Sci.}, 30(5):695--727, 1994.
\newblock \href {https://doi.org/10.2977/prims/1195165581}
  {\path{doi:10.2977/prims/1195165581}}.

\bibitem[Sage24]{sagemath}
The~{Sage} Developers.
\newblock {S}agemath, the {S}age {M}athematics {S}oftware {S}ystem ({V}ersion
  10.4), 2024.
\newblock \href{https://www.sagemath.org}{\path{https://www.sagemath.org}}.

\end{thebibliography}

\end{document}